\newcommand{\R}{\mathbbm{R}}
\numberwithin{equation}{section}
\newtheorem{thm}{Theorem}[section]
\newtheorem{pro}[thm]{Proposition}
\newtheorem{lem}[thm]{Lemma}
\newtheorem{cor}[thm]{Corollary}
\newtheorem{que}[thm]{Question}
\theoremstyle{Definition}
\newtheorem{dfn}[thm]{Definition}
\newtheorem{rem}[thm]{Remark}
\newtheorem{exm}[thm]{Example}
\theoremstyle{plain}
\begin{document}

\title{Bounded Degree Cosystolic Expanders of Every Dimension}
\author{
Shai Evra \thanks{Hebrew University, ISRAEL. Email: 		\texttt{shai.evra@mail.huji.ac.il}.
Research supported in part by the ERC.}
\and Tali Kaufman \thanks{Bar-Ilan University, ISRAEL. Email: \texttt{kaufmant@mit.edu}.
Research supported in part by the IRG, ERC and BSF.}
}
\maketitle
\begin{abstract}
In this work we present a new local to global criterion for proving a form of high dimensional expansion, which we term cosystolic expansion.
Applying this criterion on Ramanujan complexes, yields for every dimension, an infinite family of bounded degree complexes with the topological overlapping property.
This answer affirmatively an open question raised by Gromov.
\end{abstract}

%%%%%%%%%%%%%%%%%%%%%%%%%%%%%%%%%%%%%%%%%%%%%%%%%%%%%%%%%%%%%%%%%%%%%%%%%%%%%%%%%%%%%%%%%%%%%%%%%%%
%%%%%%%%%%%%%%%%%%%%%%%%%%%%%%%%%%%%%%%%%%%%%%%%%%%%%%%%%%%%%%%%%%%%%%%%%%%%%%%%%%%%%%%%%%%%%%%%%%%
\section{Introduction}

%story of expanders
Expander graphs have been central objects of study in both computer science and mathematics during the past few decades. Informally, expanders are sparse and highly connected graphs and as such have numerous applications, see \cite{HLW} and \cite{Lub1} and the references therein.
In recent years, a new theory of high dimensional expanders has emerged,
pioneered by the works of Linial-Meshulam-Wallach \cite{LinM},\cite{MW}, and Gromov \cite{Gro} (for a recent survey see \cite{Lub2}).
Linial-Meshulam-Wallach and Gromov suggested two, essentially equivalent, generalizations of the notion of graph expansion to higher dimensions 
(even though both works were in completely different research directions).

%%%%%%%%%%%%%%%%%%%%%%%%%%%%%%%%%%%%%%%%%%%%%%%%%%
\subsection{Gromov's question}
In \cite{Gro}, Gromov studied the complexity of embedding simplicial complexes into Euclidean spaces.
More specifically, he considered the following topological overlapping property:

\begin{dfn}
A $d$-dimensional simplicial complex $X$ has the $c$-{\em topological overlapping property}, for some $c>0$, 
if for every {\em continuous} map $f \colon X \rightarrow \R^d$, there exists a point $p\in \R^d$, whose preimage, $f^{-1}(p)$, 
intersects at least $c$-fraction of the $d$-simplices of $X$, i.e.,
\begin{equation}
|\{\sigma\in X(d)\colon p\in f(\sigma)\}| \geq c \cdot |X(d)|.
\end{equation}
A family of $d$-dimensional simplicial complexes is said to have the {\em topological overlapping property}, 
%or said to be a family of {\em topological expanders},
if there exists $c > 0$, such that each member of the family has the $c$-topological overlapping property.
\end{dfn}

Gromov then proved the remarkable result that, for a fixed $d \in \mathbb{N}$, the family of complete $d$-dimensional complexes have the topological overlapping property.
This result is a very striking generalization of classical results from convex combinatorics due to Boros-Furedi \cite{BF} (for $d=2$) and Barany \cite{Bar} (for $d\geq 3$).
Gromov then proceeded to give other examples of families of complexes which posses the topological overlapping property (e.g. spherical buildings and random complexes).
However, all the examples he provided were of unbounded degree, i.e. the number of faces incident to a single vertex grows with the number of vertices in the complex.
This naturally led Gromov to the following question:

\begin{que}[Gromov] \label{gromov}
Is there an infinite family of bounded degree $d$-dimensional simplicial complexes with the topological overlapping property, for arbitrary $d\geq 2$?
\end{que}

%An analogue question regarding the geometrical overlapping property, namely where continuos maps are replaced with affine ones, was also raised in \cite{Gro}. This question was resolved in the work of \cite{FGLNP}, who gave both random and explicit constructions. Where the explicit construction which \cite{FGLNP} gave were the Ramanujan complexes of \cite{LSV2} (see \cite{Evr} for a generalization).

In a recent work \cite{KKL}, Kaufman, Kazhdan and Lubotzky were able to give an affirmative answer to Gromov's question in dimension two,
i.e., they showed that there exists an infinite family of $2$-dimensional, bounded degree complexes with the topological overlapping property, see \cite[Theorem~1.3]{KKL}.
However, their proof method is inherently suited for dimension two.

In this work we give a complete answer to Gromov's question.

\begin{thm} \label{thm-main-1}
For every $d \in \mathbb{N}$, there exist an infinite family of $d$-dimensional bounded degree complexes with the topological overlapping property.
\end{thm}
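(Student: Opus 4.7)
The plan is to attack the problem through the intermediate notion of \emph{cosystolic expansion}, a high dimensional analogue of edge expansion defined via $\mathbb{F}_2$-cochains: a complex $X$ is a cosystolic expander in dimension $i$ if every $i$-cochain $\alpha$ that is not close to a cocycle has large coboundary weight, and every non-trivial $i$-cocycle has large weight relative to its $i$-coboundary distance. The first step is to show, following Gromov's cone/filling argument, that if a $d$-dimensional complex $X$ is a cosystolic expander in every dimension $0 \le i \le d-1$ with uniform constants, then $X$ has the topological overlapping property. The strategy here is to replace a continuous map $f \colon X \to \R^d$ by its combinatorial trace (a family of $\mathbb{F}_2$-cochains counting crossings with generic affine simplices of $\R^d$), and to invoke cosystolic expansion inductively on dimension to fill in the lower-dimensional obstructions; the point $p$ with large preimage then falls out of a pigeonhole on the residual top-dimensional cochain.

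The main technical contribution I would develop is a \emph{local-to-global criterion}: if all vertex links of $X$ are cosystolic expanders in dimensions up to $d-1$, and the underlying $1$-skeleton of $X$ (as well as that of each link) is a spectral expander, then $X$ itself is a cosystolic expander up to dimension $d$. I would prove this by a cascade: given a global $i$-cochain $\alpha$, decompose it into its restrictions $\alpha_v$ on the links of vertices $v$; bound $\|\alpha\|$ and $\|\delta \alpha\|$ by sums of the analogous local quantities (up to combinatorial factors controlled by bounded degree); and in each link apply the local cosystolic expansion to produce either a large coboundary or a local potential $\beta_v$ with $\delta \beta_v$ close to $\alpha_v$. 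The delicate case is when $\alpha$ looks like a coboundary in every link but is not globally close to one; here the global spectral expansion of the $1$-skeleton is used to patch the locally defined $\beta_v$'s into a single global $(i-1)$-cochain $\beta$ with controlled discrepancy.

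The final step is to exhibit the desired family. I would use the Ramanujan complexes constructed by Lubotzky--Samuels--Vishne as quotients of the Bruhat--Tits building of $\mathrm{PGL}_{d+1}$ over a non-archimedean local field by arithmetic cocompact lattices. These complexes are $d$-dimensional, of bounded degree depending only on $d$ and the residue field size, and their links of vertices are spherical buildings of type $A_d$; the latter are a fixed finite family, for which cosystolic expansion in all dimensions can be verified directly (or deduced from Gromov's bound for the complete complex and the Solomon--Tits decomposition). Combined with the Ramanujan spectral bound on the $1$-skeleton, the hypotheses of the local-to-global criterion are met, so the family is cosystolically expanding in every dimension up to $d$, and hence by the first step has the topological overlapping property.

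The hardest step, I expect, is the local-to-global criterion, specifically the "patching" of local potentials into a global one. Cosystoles are not a linear subspace, so one cannot simply orthogonally project; instead one must set up a system of local potentials whose pairwise discrepancies form a Čech-like $1$-cocycle on the link graph and then use spectral expansion of that graph to show this cocycle is close to a coboundary, with losses in the expansion constant that must be tracked carefully so that the induction on dimension closes.
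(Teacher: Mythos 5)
Your overall scaffold is the right one and matches the paper: (i) reduce the topological overlapping property to cosystolic expansion via the Gromov/Dotterrer--Kaufman--Wagner criterion (Theorem~\ref{thm-Gromov}); (ii) prove a local-to-global criterion for cosystolic expansion; (iii) feed Ramanujan complexes, whose proper links are spherical buildings, into (ii). The genuine gap is in your proof plan for (ii). Extracting local fillings $\beta_v$ in each link and then ``patching'' them into a global potential by treating their pairwise discrepancies as a \v{C}ech $1$-cocycle on a link graph, to be killed by the spectral expansion of that graph, is precisely the natural generalization of the $d=2$ argument of \cite{KKL} --- and the paper explicitly explains, in the paragraph preceding the statement of Theorem~\ref{thm-main-2}, why this route fails for $d\geq 3$: to patch in higher dimension you would need the graph whose vertices are the $i$-faces and whose edges are pairs of $i$-faces sharing a common $(i{+}1)$-face to be an excellent expander, and this is false in Ramanujan complexes regardless of the thickness. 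The paper's actual mechanism is structurally different and involves no patching at all. It first replaces a cochain $A$ by a locally minimal representative $A+\delta\gamma$ (Proposition~\ref{KKL-2.5}), then proves a coisoperimetric inequality for small locally minimal cochains (Theorem~\ref{thm-iso}) by a ``fat machinery'': fat faces $S_\eta^i(A)$ are defined inductively from dimension $k$ down to $-1$; Proposition~\ref{pro-seep} uses \emph{coboundary} expansion of the proper links to show that the fat-ladder mass $\|L_\eta(A,i)\|$ must flow into $\|\delta(A)\|$, into lower-dimensional fat ladders, or into a degenerate error term $\|\Upsilon_\eta(A)\|$; Proposition~\ref{pro-spec} bounds that error term using skeleton expansion of the links; and the induction terminates because for $\|A\|$ small the empty face is not fat (Corollary~\ref{cor-nofat-1}).

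Two smaller points. Your criterion as phrased concludes that $X$ itself is a cosystolic expander up to dimension $d$, but the paper only obtains cocycle expansion for $k\leq d-2$ and the systole bound up to $d-1$, so the conclusion is about the $(d{-}1)$-skeleton of $X$; this is why one starts from a $(d{+}1)$-dimensional Ramanujan complex to manufacture a $d$-dimensional cosystolic expander. Also, the hypotheses of the criterion are on \emph{all} proper links (all dimensions, not just vertex links), and you need both coboundary expansion and skeleton expansion of those links; the skeleton expansion of the spherical building links does not come for free from the Ramanujan spectral bound on the ambient $1$-skeleton --- Theorem~\ref{thm-build-skel}, proved via symmetric-convexity of the type-induced bipartite graphs and a trace estimate on the orbit quotient, is a genuinely separate input.
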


An immediate application of Theorem \ref{thm-main-1}, is an improvement of a result by Gromov and Guth \cite[Theorem~2.2]{GG}, 
which gives a high dimensional generalization to a classical result of Kolmogorov and Barzdin \cite{BK}. %(a work which essentially discovered the notion of graph expansion, a few years prior to Pinsker \cite{Pin}).

\begin{cor} \label{cor-gromov-guth}
For every $D\geq 2d+1$, there exists $C=C(D)>0$ and an infinite family of $d$-dimensional bounded degree complexes, $\{X_n\}_n$, which satisfies the following:
For any embedding of $X_n$ into $\mathbb{R}^D$, such that the images of any two non-adjoint simplices of $X_n$ are of distance at least $1$, the volume of the $1$-neighborhood of the image of $X_n$ is at least $C\cdot |X_n|^{\frac{1}{D-d}}$.
\end{cor}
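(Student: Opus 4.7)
The plan is to derive this as an immediate consequence of Theorem \ref{thm-main-1} combined with the Gromov--Guth volume estimate [GG, Theorem~2.2]. First, invoke Theorem \ref{thm-main-1} with the prescribed dimension $d$ to produce an infinite family $\{X_n\}_n$ of bounded degree $d$-dimensional simplicial complexes possessing the $c$-topological overlapping property for some constant $c = c(d) > 0$. This gives precisely the kind of input required by the Gromov--Guth machine.

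Next, feed this family into [GG, Theorem~2.2]. That theorem is conditional in nature: given any family of bounded degree $d$-dimensional complexes with the topological overlapping property, it outputs, for every $D \geq 2d+1$, a volume lower bound of the form
\[
\operatorname{Vol}\bigl(N_1(f(X_n))\bigr) \;\geq\; C \cdot |X_n|^{1/(D-d)}
\]
for every embedding $f \colon X_n \hookrightarrow \mathbb{R}^D$ in which the images of non-adjoint simplices are at distance at least $1$. The mechanism behind [GG, Theorem~2.2] is roughly that the topological overlap forces many $d$-simplices to pile over a common point $p$, while the thickness condition together with the bounded-degree hypothesis prevents these simplices from being packed into too small a region around $p$; integrating this local crowding bound over the target and optimizing over the candidate radii yields the exponent $1/(D-d)$.

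The main (and effectively only) obstacle is producing a family for which [GG, Theorem~2.2] actually applies in a non-vacuous way. Before the present work, such families were known only in dimension $d = 2$, via the complexes of [KKL], so the Gromov--Guth bound was unavailable for $d \geq 3$ in the bounded degree regime. Theorem \ref{thm-main-1} removes this obstruction uniformly in $d$, and the corollary then follows by direct substitution into [GG, Theorem~2.2]; no further topological or geometric argument is needed beyond verifying that the hypotheses of that theorem (bounded degree plus topological overlap) are met, which is built into Theorem \ref{thm-main-1}.
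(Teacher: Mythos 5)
Your approach is the same as the paper's: produce the bounded-degree family with the topological overlapping property via Theorem \ref{thm-main-1} and then apply the conditional Gromov--Guth volume estimate. However, you cite the wrong ingredient from \cite{GG}: the conditional result that takes bounded degree plus topological overlap as input and outputs the volume lower bound is \cite[Proposition~2.5]{GG}, which is what the paper uses. \cite[Theorem~2.2]{GG} is not the conditional machine you describe but rather the unconditional, weaker statement (with exponent $\frac{1}{D-d}-\epsilon$ and degree depending on $\epsilon$) that this corollary \emph{improves upon}, as the paper itself notes in the discussion following Corollary~\ref{cor-gromov-guth-2}. So the substance of your argument is right, but the reference you give for the key step is incorrect, and your characterization of \cite[Theorem~2.2]{GG} as ``conditional in nature'' misdescribes it.
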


%%%%%%%%%%%%%%%%%%%%%%%%%%%%%%%%%%%%%%%%%%%%%%%%%%
\subsection{A criterion for the topological overlapping property}
Let us start by addressing the question: How can one prove that a certain complex posses a topological overlapping property?
In \cite{Gro}, Gromov showed that the topological overlapping property is implied by a certain notion of high-dimensional expansion, which we now wish to define.
To do so, we first need some notations.

\begin{dfn}
Let $X$ be a $d$-dimensional simplicial complex. For any $-1\leq k \leq d$, define:
\begin{itemize}
\item $X(k) = \{\sigma \in X \; |\; |\sigma|=k+1\}$ - the collection of $k$-dimensional faces of $X$.
\item $C^k=C^k(X;\mathbb{F}_2)=\{f\colon X(k) \rightarrow \mathbb{F}_2\}$ - the space of $k$-dimensional $\mathbb{F}_2$-cochains of $X$.
\item $\delta^k \colon C^k \rightarrow C^{k+1}$, $\delta f(\tau)= \sum_{\sigma\subset \tau} f(\sigma)$ - the $k$-dimensional $\mathbb{F}_2$-coboundary map of $X$.
\item $Z^k = Z^k(X;\mathbb{F}_2) = \ker(\delta^k)$ - the space of $k$-dimensional $\mathbb{F}_2$-cocycles of $X$.
\item $B^k = B^k(X;\mathbb{F}_2) = Im(\delta^{k-1})$ - the space of  $k$-dimensional $\mathbb{F}_2$-coboundaries of $X$.
\item $\|\cdot\| \colon C^k \rightarrow [0,1]$, 
$\|f\|= \sum_{\sigma \in X(k)} \frac{|\{\tau\in X(d) \;|\; \sigma \subset \tau\}|}{{d+1 \choose k+1}\cdot |X(d)|} \cdot f(\sigma)$ - a $k$-dimensional norm on $C^k$.
\end{itemize}
\end{dfn}

We are now in a position to present our first notion of high dimensional expansion.

\begin{dfn}[Coboundary expanders]
Let $X$ be a $d$-dimensional simplicial complex, and let $\epsilon>0$.
We say that $X$ is an $\epsilon$-coboundary expander, if for every $ k =0,1,\ldots,d-1$, 
\begin{equation}
Exp_b^k(X) := \min \left\lbrace \frac{\|\delta(f)\|}{\min_{b\in B^k}\|f+b\|}\; | \; f \in C^k \setminus B^k \right\rbrace \geq \epsilon.
\end{equation}
\end{dfn}

The notion of coboundary expansion first appeared implicitly in the work \cite{LinM}, whose motivation was to generalize to higher dimensional complexes, the phase transition phenomenon of connectivity of random graphs in the Erdos-Reyni model. 
Later on, Gromov \cite{Gro}, came across essentially the same notion of expansion, while studying fiberwise complexetiy, and showing that coboundary expansion implies the topological overlapping property. 

Unfortunately, up to this date, no bounded degree coboundary expanders are known to exists, so one cannot use Gromov's result, to answer his question.
The problem is that coboundary expansion requires vanishing of $\mathbb{F}_2$-cohomology of the complex, a property which is so strong that even certain Ramanujan complexes are known to not satisfy it (see \cite{KKL}). 

In order to bypass this problem, we define a weaker notion of expansion, which we call cosystolic expansion. This notion is strictly weaker than coboundary expansion, since it allows the existence of cocycles which are not coboundaries, as long as they are sufficiently large.

\begin{dfn}[Cosystolic expanders]
Let $X$ be a $d$-dimensional simplicial complex, and let $\epsilon,\mu>0$.
We say that $X$ is an $(\epsilon,\mu)$-cosystolic expander, if for every $ k =0,1,\ldots,d-1$,
\begin{equation}
Exp_z^k(X) := \min \left\lbrace \frac{\|\delta(f)\|}{\min_{z\in Z^k}\|f+z\|}\; | \; f \in C^k \setminus Z^k \right\rbrace \geq \epsilon
\end{equation}
and 
\begin{equation}
Syst^k(X) := \min \{\|z\|\; | \; z \in Z^k \setminus B^k \} \geq \mu.
\end{equation}
\end{dfn}

In \cite{DKW}, the authors strengthen the aforementioned result of Gromov, proving that cosystolic expansion implies the topological overlapping property.

\begin{thm}[Gromov's Topological Overlapping Criterion \cite{DKW}] \label{thm-Gromov}
For every $d\in \mathbb{N}$ and $\epsilon,\mu > 0$, there exist $c=c(d,\epsilon,\mu)>0$, such that
if $X$ is a $d$-dimensional $(\epsilon,\mu)$-cosystolic expander, then $X$ has the $c$-topological overlapping property.
\end{thm}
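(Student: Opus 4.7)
I would follow Gromov's cascade strategy, adapted to the cosystolic setting in the spirit of \cite{DKW}: transport information about a generic target point $p\in\R^d$ along a flag of affine half-subspaces via iterated expansion, invoking the systolic bound to guarantee that the correction cocycles one adds at each level are in fact coboundaries.

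The first step is a reduction to the piecewise-linear case: by iterated barycentric subdivision, simplicial approximation, and a small general-position perturbation, any continuous $f\colon X\to\R^d$ may be replaced by a piecewise-affine map $\bar f$ whose simplex-images are transverse to a chosen affine flag, at the cost of a factor depending only on $d$ in the overlap constant. Next, pick a generic half-flag $V_0\supset V_1\supset\cdots\supset V_d=\{p\}$ with $\dim V_k=d-k$ and $\partial V_{k-1}=V_k$, and for each $k$ and $\sigma\in X(k)$ set $\alpha_k(\sigma):=|\bar f(\sigma)\cap V_k|\bmod 2$. A short transversality argument---each generic intersection $\bar f(\tau)\cap V_{k-1}$ is a compact $1$-manifold whose boundary lies in $(\partial\bar f(\tau)\cap V_{k-1})\cup(\bar f(\tau)\cap V_k)$---yields the cascade identity $\delta\alpha_{k-1}=\alpha_k$, so in particular $\alpha_k\in B^k$ for $k\geq 1$; moreover $\|\alpha_d\|$ is precisely the weighted fraction of top-simplices of $X$ whose $\bar f$-images contain $p$.

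Now suppose for contradiction that $\|\alpha_d\|<c$ for every generic flag, with $c$ small. Apply $Exp_z^{d-1}\geq\epsilon$ to $\alpha_{d-1}$ (which is not a cocycle, as $\delta\alpha_{d-1}=\alpha_d\neq 0$) to obtain $z^*\in Z^{d-1}$ with $\|\alpha_{d-1}+z^*\|\leq c/\epsilon$. If $\|z^*\|<\mu$ then $Syst^{d-1}\geq\mu$ forces $z^*\in B^{d-1}$, and since $\alpha_{d-1}\in B^{d-1}$ already, the corrected cochain $\tilde\alpha_{d-1}:=\alpha_{d-1}+z^*$ is still in $B^{d-1}$ and still satisfies $\delta\tilde\alpha_{d-1}=\alpha_d$. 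Iterating the same step for $k=d-2,\ldots,1$ produces $\tilde\alpha_k\in B^k$ with $\|\tilde\alpha_k\|$ bounded by a dimensional constant times $c/\epsilon^{d-k}$. At the final level $k=0$, $\alpha_0=\mathbbm 1_{\bar f^{-1}(V_0)}$ has $\delta\alpha_0=\tilde\alpha_1$, so $Exp_z^0\geq\epsilon$ produces $z\in Z^0=\{0,\mathbbm 1\}$ (by connectedness) with $\min(\|\alpha_0\|,1-\|\alpha_0\|)\lesssim c/\epsilon^d$; but for a generic bounding hyperplane of $V_0$ the indicator $\alpha_0$ has norm bounded away from $0$ and $1$ (each side contains a positive fraction of vertices), yielding $c\geq c(d,\epsilon,\mu)>0$.

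The principal obstacle is ensuring $\|z^*\|<\mu$ at each intermediate cascade step so that the systolic bound can actually be invoked. The naive triangle estimate $\|z^*\|\leq\|\alpha_k\|+\|\tilde\alpha_k\|$ involves the uncontrolled term $\|\alpha_k\|$, so one must either randomize over the flag and use a Markov-type argument to discard flags on which some $\|\alpha_k\|$ is too large, or reformulate the cascade using a coset-minimization norm from the outset. Either way, taking $c$ below a dimensional constant times $\mu\epsilon^d$ keeps every intermediate correction cocycle below the systolic threshold, allowing the cascade to close up; this is the precise point where the strictly weaker cosystolic hypothesis suffices in place of full coboundary expansion.
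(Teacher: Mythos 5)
The paper does not prove this statement; it is cited verbatim to [DKW], so there is no ``paper's own proof'' to compare your sketch against. That said, your sketch as written is internally inconsistent at the central cascade step, and this is a genuine gap rather than a cosmetic one.

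The problem is that you define a single sequence of cochains $\alpha_k(\sigma)=|\bar f(\sigma)\cap V_k| \bmod 2$ out of half-subspaces $V_k$ and assert the cascade identity $\delta\alpha_{k-1}=\alpha_k$ for all $k$. This single-sequence cascade cannot hold: since $\delta\circ\delta=0$, it would force $\alpha_k=\delta\alpha_{k-1}=\delta\delta\alpha_{k-2}=0$ for every $k\geq 2$, so in particular $\alpha_d=0$ identically, which contradicts the goal of lower-bounding $\|\alpha_d\|$. You then compound the inconsistency by asserting that $\alpha_{d-1}$ is \emph{not} a cocycle (because $\delta\alpha_{d-1}=\alpha_d\neq 0$), while your own cascade exhibits $\alpha_{d-1}=\delta\alpha_{d-2}\in B^{d-1}\subset Z^{d-1}$. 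This matters concretely: you then apply the cosystolic expansion inequality to $\alpha_{d-1}$, but that inequality is only nonvacuous for cochains outside $Z^{d-1}$; for $\alpha_{d-1}\in B^{d-1}$ one has $\min_{z\in Z^{d-1}}\|\alpha_{d-1}+z\|=0$ and the inequality yields no information. Your own ``principal obstacle'' paragraph gestures at the fact that $\|\alpha_k\|$ is uncontrolled, but the issue is sharper: the quantity you feed into the expansion is identically zero.

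The geometric fact you are reaching for is that, for a half-subspace $V_{k-1}$ with full affine boundary $H_k=\partial V_{k-1}$, the transversality/bordism count gives
\begin{equation}
\delta\bigl(|\bar f(\cdot)\cap V_{k-1}|\bmod 2\bigr)(\tau)\;=\;|\bar f(\tau)\cap H_k|\bmod 2,
\end{equation}
so the coboundary of a \emph{half-subspace filling cochain} equals the \emph{full-subspace cocycle} one level up, not another filling cochain. The correct Gromov--DKW cascade therefore runs over two intertwined families: cocycles $\beta_k$ coming from full affine subspaces $H_k$ (these satisfy $\delta\beta_k=0$), and fillings $\alpha_{k-1}$ coming from a choice of half of $H_{k-1}$ bounded by $H_k$, with $\delta\alpha_{k-1}=\beta_k$. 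The expansion estimate is then applied to the fillings $\alpha_{k-1}$, whose distance to $Z^{k-1}$ is the meaningful quantity, and the systolic hypothesis is exactly what lets you correct a nearby cocycle $z^*$ to a coboundary so that the two-family bookkeeping survives the correction. This also requires the averaging/randomization over flags (and over the two halves at each level) that you mention only in passing; it is not an optional robustness device but the mechanism by which the norm of $\beta_{k}$ is controlled from the norms of the fillings one level down. Without the two-family distinction, there is nothing to average and nothing for the expansion inequality to bite on.

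A smaller point: your closing step takes $Z^0=\{0,\mathbbm 1\}$ ``by connectedness,'' but connectedness is not a hypothesis here. What the cosystolic hypothesis gives is that any $z\in Z^0\setminus B^0$ has $\mu\leq\|z\|\leq 1-\mu$, which is the estimate you actually need; invoking connectedness is both unnecessary and unjustified.
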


Hence, by Gromov's topological overlapping criterion, in order to prove Theorem \ref{thm-main-1}, 
one needs to prove the existence of an infinite family of bounded degree cosystolic expanders.

%%%%%%%%%%%%%%%%%%%%%%%%%%%%%%%%%%%%%%%%%%%%%%%%%%
\subsection{A criterion for cosystolic expansion}
The first infinite family of bounded degree cosystolic expanders was constructed in \cite{KKL},
where it was shown that the $2$-dimensional skeletons (see below) of $3$-dimensional Ramanujan complexes of sufficiently large degree are cosystolic expanders. 

In this work our intention was to generalize the results of \cite{KKL} to all dimensions.
However, the proof method of \cite{KKL} is specifically designed for the two-dimensional case.
Indeed, in order to apply their proof to dimension three or higher, one will need to assume that the graph, whose vertex set is the set of edges in the complex and its edge set is the set of pairs of edges which form a triangle in the complex, is an excellent expander graph. 
As it turns out, this condition is too strong and does not hold in any Ramanujan complex.

The main novelty of our work is to present a new method for transforming expansion in lower dimension to high dimension in the complex.
This allow us to prove our main result, which is a local to global criterion for cosystolic expansion, in all dimensions.
To state this criterion, we need to define the notions of {\em skeletons}, {\em links} and {\em skeleton expander} of a complex. 
\begin{dfn}
Let $X$ be a $d$-dimensional simplicial complex.
\begin{itemize}
\item For $k \leq d$, the $k$-dimensional {\em skeleton} of $X$, denoted $X^{(k)}$, is the complex obtained by deleting from $X$ all faces of dimension greater then $k$.
\item For $\sigma \in X$, the {\em link} of the face $\sigma$ in $X$, denoted $X_\sigma$, is the complex obtained by picking only the faces in $X$ that contains $\sigma$, and removing $\sigma$ from each of these faces. A link is called a proper link of $X$, if $1\leq |\sigma|\leq d-1$.
\item A complex $X$, is said to be an $\alpha$-skeleton expander, for some $\alpha>0$, if its $1$-skeleton satisfy the following graph expansion property
\footnote{The constant $4$ in our definition of skeleton expansion is unnatural, as it implies a notion strictly weaker then graph expansion. See \cite{KM1} for a more accurate notion of skeleton expansion. However, our notion of skeleton expansion will be sufficient for our use here.}:
\begin{equation}
\forall\, A \subset X(0), \quad  \|E(A,A)\| \leq 4 \cdot (\|A\|^2 + \alpha \cdot \|A\|),
\end{equation}
where $E(A,A) \subset X(1)$ are the edges in $X$ with both vertices in $A$.
\end{itemize}
\end{dfn}

We are now ready to state our local to global criterion for cosystolic expansion.
\begin{thm}[Main Theorem] \label{thm-main-2}
For any $d,Q\in \mathbb{N}$ and $\beta > 0$, 
there exists $\epsilon = \epsilon(d,\beta,Q)>0$, $ \mu = \mu(d,\beta) > 0$ and $\alpha = \alpha(d,\beta) > 0$, 
such that if $X$ is a $d$-dimensional complex satisfying:
\begin{itemize}
\item $X$ is $Q$-bounded degree, i.e. $\max_{v\in X(0)}|X_v| \leq Q$.
\item All the proper links of $X$ are $\beta$-coboundry expanders.
\item All the proper links of $X$ and $X$ itself are $\alpha$-skeleton expanders.
\end{itemize}
Then the $(d-1)$-skeleton of $X$ is an $(\epsilon,\mu)$-cosystolic expander.
\end{thm}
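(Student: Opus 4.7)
The strategy is a Garland-style local-to-global argument, combined with a patching procedure driven by the skeleton-expansion hypothesis. For each vertex $v\in X(0)$, the link $X_v$ is a $(d-1)$-dimensional complex which, by assumption, is a $\beta$-coboundary expander and an $\alpha$-skeleton expander. Given $f\in C^k(X)$, define its localization $f_v\in C^{k-1}(X_v)$ by $f_v(\sigma)=f(\{v\}\cup\sigma)$. A direct double-counting identity produces
\[
\sum_{v\in X(0)}\|f_v\|_{X_v}=c_{k,d}\cdot\|f\|, \qquad \sum_{v\in X(0)}\|\delta f_v\|_{X_v}\leq c'_{k,d}\cdot\|\delta f\|,
\]
with constants depending only on $k$ and $d$, letting us convert averaged local estimates into global ones.

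I would first establish the systolic lower bound. Suppose $z\in Z^k(X^{(d-1)})\setminus B^k(X^{(d-1)})$ satisfies $\|z\|<\mu$ for some small $\mu=\mu(d,\beta)$ to be chosen. Each $z_v$ lies in $Z^{k-1}(X_v)$, and since $X_v$ is a $\beta$-coboundary expander (so in particular $H^{k-1}(X_v;\mathbb{F}_2)=0$), one gets $h_v\in C^{k-2}(X_v)$ with $z_v=\delta h_v$ and $\|h_v\|_{X_v}\leq\beta^{-1}\|z_v\|_{X_v}$. Patching the $h_v$ using a distinguished vertex of each $(k-1)$-face yields a global $h\in C^{k-1}(X^{(d-1)})$ for which $z-\delta h$ is supported on the set of faces where different link-lifts disagree. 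The $\alpha$-skeleton expansion of $X$ forces this ``inconsistency set'' to have norm small enough that further iterative corrections drive the error to zero, concluding $z\in B^k$, a contradiction.

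For the coboundary-expansion-type bound, take $f\in C^k\setminus Z^k$ and replace it by a norm-minimizer in its coset $f+Z^k$. For each $v$ with $f_v\notin Z^{k-1}(X_v)$, the coboundary expansion of $X_v$ produces $b_v\in B^{k-1}(X_v)$ with $\|f_v+b_v\|_{X_v}\leq\beta^{-1}\|\delta f_v\|_{X_v}$; for the remaining $v$, the fact that $X_v$ is a coboundary expander (hence $Z^{k-1}(X_v)=B^{k-1}(X_v)$) gives that $f_v$ is itself a small link-coboundary. Gluing the $b_v$ via the same patching procedure into a global $g\in C^{k-1}(X^{(d-1)})$ and summing through the local-to-global identities yields $\|f+\delta g\|\leq C\|\delta f\|$, and the minimality of $f$ then gives the required $\mathrm{Exp}_z^k(X^{(d-1)})\geq\epsilon=1/C$.

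The main obstacle in both parts is the patching step: local witnesses extracted from different links disagree on shared faces, and the discrepancy must be absorbed either into the norm estimate or into the coboundary conclusion. The central technical statement I would isolate and prove is a ``localization lemma'' of the following type: if a cochain $c\in C^k(X^{(d-1)})$ has every localization $c_v$ equal to a small coboundary in $X_v$, then for $\alpha=\alpha(d,\beta)$ sufficiently small and $Q$-bounded degree, $c$ itself is a small global coboundary on $X^{(d-1)}$. Quantifying this so that a recursive correction terminates with bounded loss, and tracking the resulting dependencies of $\epsilon,\mu,\alpha$ on $d,\beta,Q$, is the heart of the argument.
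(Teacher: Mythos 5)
Your proposal takes a genuinely different route from the paper, and it contains a gap at the step you yourself flag as the heart of the argument.

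The paper does \emph{not} patch local coboundaries at all. Instead it first reduces, via a combinatorial descent argument (Proposition~\ref{KKL-2.5}), to \emph{locally minimal} cochains --- cochains $A$ whose localization $I_\sigma(A)$ is norm-minimal in its $B^*(X_\sigma)$-coset for \emph{every} $\sigma$, a strictly stronger condition than the global coset-minimality you invoke. It then proves a coisoperimetric inequality (Theorem~\ref{thm-iso}) for small locally minimal cochains by introducing the ``fat machinery'': a tower of \emph{fat faces} $S_\eta^i(A)$ in all dimensions $i=k,k-1,\dots,-1$, and a cascading inequality (Proposition~\ref{pro-seep}) showing that, modulo $\|\delta(A)\|$ and a set of \emph{degenerate} faces, fatness must seep from dimension $i$ to dimension $i-1$. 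The degenerate faces are bounded using skeleton expansion of the links in dimensions $0,\dots,k-1$ --- applied one dimension at a time, always to a \emph{pair} of fat faces sharing a codimension-one intersection (Proposition~\ref{pro-spec}) --- so the two-sided coincidence problem is confined to an edge-level statement at each step, which is exactly what the $\alpha$-skeleton hypothesis can control. Iterating down to dimension $-1$ and noting that the empty face cannot be fat when $\|A\|<\bar\mu$ closes the argument, with no gluing whatsoever.

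The gap in your proposal is precisely the ``localization lemma'' you isolate but do not prove: \emph{if every localization $c_v$ of a cochain $c$ is a small coboundary in $X_v$, then $c$ is a small global coboundary}, with iterative corrections converging. This is not a routine lemma; it is essentially the full content of the theorem. More importantly, the paper's introduction explains exactly why this kind of direct vertex-link patching fails in dimension $\ge 3$: the inconsistency set that arises when you glue lifts $I^v(b_v)$ over shared $(k-1)$-faces involves simultaneous coincidences in several links at once, and controlling it by iteration requires expansion of a \emph{derived graph} (with edges of $X$ as vertices), which is strictly stronger than $\alpha$-skeleton expansion of $X$ and its links and provably fails in Ramanujan complexes. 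The $\alpha$-skeleton hypothesis only gives a one-sided mixing bound on ordinary vertex pairs; it does not, on its own, let you iterate away an inconsistency set supported on $k$-faces. There is also a secondary issue: when you ``replace $f$ by a norm-minimizer in $f+Z^k$'', you get global minimality, but the local application of coboundary expansion to $f_v$ requires $f_v$ to be \emph{locally} minimal in $X_v$, which does not follow; the paper's Proposition~\ref{KKL-2.5} is specifically an algorithm to enforce local minimality while controlling $\|\gamma\|$ by $Q\cdot\|A\|$, and this is where the bounded-degree hypothesis (and the dependence of $\epsilon$ on $Q$) actually enters.
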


It seems very natural to compare our local to global expansion result, with that of Garland \cite{Gar}. 
However, other then philosophically, the two results are unrelated, Garland's work is over $\mathbb{R}$, while ours is over $\mathbb{F}_2$.
Moreover, the proof method of both results is quite different.

%%%%%%%%%%%%%%%%%%%%%%%%%%%%%%%%%%%%%%%%%%%%%%%%%%
\subsection{Ramanujan complexes}
The well known Ramanujan complexes (see \cite{Lub2}), constructed over a fixed residue field, are bounded degree high dimensional complexes, which are excellent skeleton expanders (this follows directly from the Ramanujan property).
Their proper links are spherical buildings admitting a strongly transitive action, and it follows from the work of Gromov \cite{Gro} (see also \cite{LMM}) that they are coboundary expanders.
Therefore, we are left to prove that their links, i.e., that the spherical buildings,
are also good skeleton expanders \footnote
{After a completion of our paper, Izhar Oppenheim has pointed to us that a similar result to Theorem \ref{thm-main-3},
could potentially be deduced from the work of~\cite{Opp}, as a special case of Theorem~8.12 there.
Our proof that exploits the geometric structure of the spherical buildings, allows us to get a proof which is significantly shorter.}.

\begin{thm} \label{thm-main-3}
For any $d\in \mathbb{N}$ and $\alpha>0$, there exist $q_0=q_0(d,\alpha)>0$, such that for any prime power $q\geq q_0$, 
all $d$-dimensional, $q$-thick spherical buildings admitting a strongly transitive action are $\alpha$-skeleton expanders. 
\end{thm}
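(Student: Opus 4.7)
The plan is to exploit the proper $(d+1)$-coloring of a spherical building by types, together with the spectral theory of its rank-$2$ residues, to reduce the skeleton-expansion inequality to an eigenvalue bound of order $q^{-1/2}$ on a collection of biregular bipartite graphs.

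\textbf{Step 1 (Type decomposition).} A spherical building $X$ of rank $d+1$ is properly $(d+1)$-colored by a type map $t \colon X(0) \to \{0,\ldots,d\}$, so every edge of $X$ joins vertices of distinct types. Writing $A_i := A \cap t^{-1}(i)$, we have $\|E(A,A)\| = \sum_{i<j} \|E(A_i,A_j)\|$, $\|A\| = \sum_i \|A_i\|$, and $2\|A_i\|\|A_j\| \leq \|A_i\|^2 + \|A_j\|^2$. Hence it suffices to prove, for each ordered pair $i \neq j$, an expander-mixing type bound
\begin{equation*}
\|E(A_i,A_j)\| \;\leq\; \|A_i\|\,\|A_j\| \;+\; \alpha_{d,q}\bigl(\|A_i\|+\|A_j\|\bigr),
\end{equation*}
with $\alpha_{d,q}\to 0$ as $q\to\infty$; summing over the $\binom{d+1}{2}$ pairs and absorbing constants into the ``unnatural'' factor $4$ will give the required $\|E(A,A)\| \leq 4(\|A\|^2+\alpha\|A\|)$.

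\textbf{Step 2 (Biregularity via strong transitivity).} Consider the bipartite graph $B_{i,j}$ on $X(0)_i \sqcup X(0)_j$ with an edge for every $1$-simplex of $X$ of type $\{i,j\}$. Strong transitivity of $G \leq \mathrm{Aut}(X)$ makes $G$ transitive on chambers and on (chamber, apartment) pairs, so the stabilizer of a type-$i$ vertex acts transitively on its type-$j$ neighbors (these are parametrized by the type-$j$ panels of the residue at the vertex, on which the stabilizer acts transitively). Hence $B_{i,j}$ is biregular, and all type-$i$ vertices carry the same weight in $\|\cdot\|$; the normalized counting measure on $X(0)_i$ therefore agrees, up to a $d$-dependent factor, with the measure induced by $\|\cdot\|$.

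\textbf{Step 3 (Spectral bound from rank-$2$ residues and mixing lemma).} Each rank-$2$ residue of cotype $\{i,j\}^c$ in $X$ is itself a $q$-thick rank-$2$ spherical building, i.e.\ a thick generalized $m_{ij}$-gon. By Feit--Higman, $m_{ij} \in \{2,3,4,6,8\}$, and for each such $m$ the classical explicit spectrum of the incidence graph of a thick generalized $m$-gon gives a normalized second singular value of order $q^{-1/2}$. Since $B_{i,j}$ fibers over the set of cotype-$\{i,j\}^c$ residues with these rank-$2$ incidence graphs as fibers, and since $G$ acts transitively on these fibers, a direct fiberwise averaging argument (this is precisely where the geometric structure of the building is exploited in lieu of heavier Hecke-algebra machinery) propagates the fiberwise $O(q^{-1/2})$ bound to $\lambda_2(B_{i,j}) \leq C_d\, q^{-1/2}$. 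Applying the (weighted) expander mixing lemma to $B_{i,j}$ and using $\sqrt{\|A_i\|\|A_j\|}\leq \tfrac{1}{2}(\|A_i\|+\|A_j\|)$ yields the per-pair estimate of Step 1 with $\alpha_{d,q} \leq C_d \, q^{-1/2}$; choosing $q_0 = q_0(d,\alpha)$ large enough finishes the proof.

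The main obstacle is Step~3: although the rank-$2$ eigenvalue estimate for generalized polygons is classical, $B_{i,j}$ is \emph{not} itself the incidence graph of a generalized polygon once $d > 1$, so one has to lift the rank-$2$ information to the full bipartite graph. The geometric fiber-over-residues argument, powered by the strongly transitive $G$-action, is exactly the device that avoids the full Hecke-algebraic spherical-function analysis on $G/P_i$ and $G/P_j$, and is what is alluded to in the footnote as the source of the shorter proof.
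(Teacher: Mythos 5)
Your Steps 1 and 2 match the paper (Lemma \ref{lem-build-regular} plus the partite reduction in Proposition \ref{partite-mix}), but Step 3 has a genuine gap.

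The ``fiberwise averaging argument'' is asserted, not given, and I do not see how to make it work. First, as a structural matter, $B_{i,j}$ does not fiber over the cotype-$\{i,j\}^c$ residues: a single type-$i$ vertex lies in many such residues (one for every coface of cotype $\{i,j\}^c$ in its star), so the rank-$2$ incidence graphs overlap heavily in vertices rather than partitioning $V(B_{i,j})$. Second, even granting an edge-cover of $B_{i,j}$ by these small incidence graphs, a spectral gap does not propagate from the pieces to the union without a genuine additional argument; indeed, the obstruction is sharp. Take the $A_3$ building over $\mathbb{F}_q$ with $\{i,j\} = \{0,2\}$ (lines and hyperplanes of $\mathbb{F}_q^4$). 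The cotype-$\{1\}$ residues are generalized digons, i.e.\ complete bipartite graphs, so each fiber has normalized second singular value \emph{zero}; yet $B_{0,2}$ itself certainly has a nontrivial $\lambda_2$. So the quantity you want to propagate does not even live on the fibers, and transitivity of the $G$-action on the residues cannot create it. Appealing to Feit--Higman and the classical polygon spectra is also unnecessary, because for digon residues that spectrum is trivial while the global bound is not.

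The paper takes an entirely different route that bypasses residues. It introduces the notion of a $\theta$-symmetric-convex bipartite graph and shows two things: (a) Proposition \ref{pro-symm-conv->spec} bounds $\lambda_2$ of any such graph by $\theta\cdot\max\{k^{-1/2}, k'^{-1/2}\}$ via a trace-formula argument on the \emph{quotient} multigraph $\bar X = X/K$, $K = \mathrm{stab}_G(v)$ (the quotient has at most $\theta$ vertices, a single opposite orbit at maximal distance contributing the $2kk'$ term, and all closed $2$-walks at other vertices contribute at most $\theta\max\{k,k'\}$ because of gallery convexity); (b) Proposition \ref{pro-build->symm-conv} verifies that $B_{i,j}$ is $\theta_d$-symmetric-convex using Lemmas \ref{lem-build-aptsize}, \ref{lem-build-orb}, \ref{lem-build-gallconv}, and \ref{lem-build-opp} -- in particular, the convexity axiom of apartments is what bounds the backward degree by the apartment size, and the unique opposite chamber in an apartment is what gives the unique farthest $G_v$-orbit. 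The eigenvalue bound is then fed into the partite mixing lemma (Proposition \ref{partite-mix} and Corollary \ref{cor-skeleton-expansion}). If you want to repair your proof, the quotient-by-stabilizer plus trace argument is the mechanism that replaces your Step 3; the generalized-polygon spectra are not used at all.
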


In conclusion, Ramanujan complexes of sufficiently large but fixed degree satisfies the requirements of Theorem \ref{thm-main-2}, which together with Gromov's topological overlapping criterion (Theorem \ref{thm-Gromov}), implies the following corollary regarding the expansion of Ramanujan complexes.  

\begin{cor} \label{cor-main-4}
For any $d\in \mathbb{N}$, there exists $q_0=q_0(d)$, such that for any prime power $q\geq q_0$, there exists $\mu=\mu(d)>0$, $\epsilon=\epsilon(d,q)>0$ and $c=c(d,q)>0$, such that if $X$ is the $d$-dimensional skeleton of a $(d+1)$-dimensional $q$-thick Ramanujan complex, then
\begin{itemize}
\item $X$ is an $(\epsilon,\mu)$-cosystolic expander.
\item $X$ has the $c$-topological overlapping property.
\end{itemize}
\end{cor}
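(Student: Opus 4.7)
The plan is to assemble the three main ingredients of the paper in the appropriate order. Let $X$ be the $d$-skeleton of a $(d+1)$-dimensional $q$-thick Ramanujan complex $Y$. First I would verify that $X$ satisfies the three hypotheses of Theorem \ref{thm-main-2}, with $\beta$ and $Q$ depending only on $d$ (and $q$), so that Theorem \ref{thm-main-2} can be invoked.

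Concretely, the first step is the bounded degree condition: since $Y$ is a quotient of the affine Bruhat--Tits building of type $\widetilde{A}_d$ over a local field with residue field of order $q$, each vertex link has size bounded by a function $Q=Q(d,q)$, so in particular the $d$-skeleton is $Q$-bounded degree. The second step is the coboundary expansion of proper links: every proper link of a point in the Bruhat--Tits building is a spherical building of type $A_{d_1}\times A_{d_2}$ (for appropriate $d_1+d_2=d$), and by \cite{Gro} (see also \cite{LMM}) these $q$-thick spherical buildings, which admit a strongly transitive group action, are $\beta$-coboundary expanders with $\beta=\beta(d)$ independent of $q$. This supplies the value of $\beta$ to feed into Theorem \ref{thm-main-2}, which then outputs constants $\alpha(d,\beta)=\alpha(d)$, $\mu(d,\beta)=\mu(d)$, and $\epsilon(d,\beta,Q)=\epsilon(d,q)$.

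The third step is the skeleton expansion: the Ramanujan property of $Y$ implies by a standard Cheeger-type estimate that the $1$-skeleton of $X$ (and indeed of $Y$) is an $\alpha'$-skeleton expander, with $\alpha'$ as small as desired once $q$ is large enough, so one can guarantee $\alpha' \leq \alpha(d)$. For the proper links, which are spherical buildings, one applies Theorem \ref{thm-main-3}: choosing $q \geq q_0(d,\alpha(d))$ ensures that every proper link of $X$ is an $\alpha(d)$-skeleton expander. Thus setting $q_0(d)$ to be the maximum of the two thresholds produced in this step, all hypotheses of Theorem \ref{thm-main-2} are verified.

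Having checked the hypotheses, Theorem \ref{thm-main-2} immediately yields that the $(d-1)$-skeleton of the $(d+1)$-dimensional complex $Y$ --- that is, $X$ viewed as a $d$-dimensional complex whose proper links are those of $Y$ --- is an $(\epsilon,\mu)$-cosystolic expander, proving the first bullet. The second bullet is then an immediate application of Gromov's topological overlapping criterion (Theorem \ref{thm-Gromov}) to this $(\epsilon,\mu)$-cosystolic expander, producing the constant $c=c(d,\epsilon,\mu)=c(d,q)$. The only nontrivial conceptual point in the whole assembly is to apply Theorem \ref{thm-main-2} to $Y$ viewed one dimension higher than $X$ so that proper links of $X$ are recovered as proper links of $Y$; the rest is bookkeeping of the quantifiers.
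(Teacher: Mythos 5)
Your proposal follows essentially the same route as the paper: verify (as in the paper's Theorem~\ref{thm-ram-cri}) that a $(d+1)$-dimensional $q$-thick Ramanujan complex has bounded degree, proper links that are coboundary expanders with constant depending only on $d$, and skeleton expansion (of both the complex and its proper links) tending to $0$ as $q\to\infty$; then feed these into Theorem~\ref{thm-main-2} applied at dimension $d+1$ to obtain cosystolic expansion of $X$, and finish with Gromov's criterion (Theorem~\ref{thm-Gromov}). The only flaw is a small index slip: applying Theorem~\ref{thm-main-2} to the $(d+1)$-dimensional complex $Y$ yields cosystolic expansion of the $d$-skeleton of $Y$ (which is $X$), not of its $(d-1)$-skeleton as written; your phrase \textquotedblleft that is, $X$\textquotedblright\ makes clear that you mean the correct object, so this is merely a typo and not a gap.
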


Finally, by combining the explicit construction of Ramanujan complexes of \cite{LSV2}, with Corollary \ref{cor-main-4}, we get an explicit construction of bounded degree cosystolic expanders with the topological overlapping property in every dimension (Theorem \ref{thm-main-1}).
It is interesting to note that no such random construction, i.e., of bounded degree high dimensional complexes with the topological overlapping property, is currently known.

%%%%%%%%%%%%%%%%%%%%%%%%%%%%%%%%%%%%%%%%%%%%%%%%%%
\subsection{Organization of the paper}
In section \ref{sec-pre} we review some basic definition related to simplicial complexes, norms, links, skeletons and high dimensional expansions.
In section \ref{sec-main} we prove the local to global criterion for cosystolic expansion (Theorem \ref{thm-main-1}).
In section \ref{sec-mix} we prove a one sided mixing lemma for the $1$-skeletons of regular complexes.
In section \ref{sec-build} we review the definition of spherical buildings and show that they are good skeleton expanders. 
Finally, in section \ref{sec-ram}, we prove that Ramanujan complexes satisfies the conditions of Theorem \ref{thm-main-1}, hence their skeletons are therefore examples of bounded degree complexes with the topological overlap property.

%%%%%%%%%%%%%%%%%%%%%%%%%%%%%%%%%%%%%%%%%%%%%%%%%%
\subsection{Acknowledgments}

The authors are grateful to David Kazhdan and Alex Lubotzky for useful discussions and advices, and for Ron Rosenthal for many valuable improvements for this paper.
The first author would like to especially thank Alex Lubotzky for introducing, teaching and encouraging him to work on this problem, without which this work would not have been possible.
We also thank the ERC, and BSF for their support.
This work is part of the Ph.D. thesis of the first author at the Hebrew University of Jerusalem, Israel.

%%%%%%%%%%%%%%%%%%%%%%%%%%%%%%%%%%%%%%%%%%%%%%%%%%%%%%%%%%%%%%%%%%%%%%%%%%%%%%%%%%%%%%%%%%%%%%%%%%%
%%%%%%%%%%%%%%%%%%%%%%%%%%%%%%%%%%%%%%%%%%%%%%%%%%%%%%%%%%%%%%%%%%%%%%%%%%%%%%%%%%%%%%%%%%%%%%%%%%%
\section{Preliminaries on complexes and expansions} \label{sec-pre}
In this section we present the basic definitions and properties of simplicial complexes with norms,
as well as notions of high-dimensional expansions.

%%%%%%%%%%%%%%%%%%%%%%%%%%%%%%%%%%%%%%%%%%%%%%%%%
\subsection{Complexes}
Throughout this paper we shall use the following notations regarding simplicial complexes:

A simplicial complex, $X$, over a set of vertices, $V$, is family of subsets of $V$, $X \subset 2^V$, which is closed under inclusions,
i.e. if $F \in X$ and $E \subset F$ then $E \in X$ (note that the empty-set is always a face in any complex). 
Call the elements of $X$, faces or simplices.
The dimension of a simplex $F \in X$, is defined as $\dim(F) = |F|-1$,
and the dimension of the entire complex is defined as the maximal dimension of a simplex in it, 
i.e. $\dim(X) = \max_{F \in X}\dim(F)$.
A complex is said to be pure if all its maximal faces are of the same dimension.

For convenience sake, by a $k$-face we mean a $k$-dimensional face of the complex, and by a $d$-complex we will always mean a finite $d$-dimensional pure simplicial complex.

Let $X$ be a $d$-complex.
For any $-1 \leq k \leq d$, denote by $X(k)$ the collection of $k$-faces in $X$,
and by $C^k=C^k(X;\mathbb{F}_2) = \{f:X(k) \rightarrow \mathbb{F}_2\}$ the space of $k$-cochains.
As usual, denote by
\begin{equation}
\delta = \delta^k \colon C^k \rightarrow C^{k+1}, \qquad (\delta f)(\tau)= \sum_{\sigma\subset \tau} f(\sigma) 
\end{equation}
the $k$-coboundry map, and by $B^k = B^k(X,\mathbb{F}_2)= Im(\delta^{k-1})$ and $Z^k = Z^k(X,\mathbb{F}_2)= \ker(\delta^k)$
the spaces of $k$-coboundries and $k$-cocycles, respectively.
Recall that $\delta^k \circ \delta^{k-1} = 0$, hence $B^k \subset Z^k$.

Since we are working over $\mathbb{F}_2$, any $k$-cochain is an indicator function of a subset of $k$-faces, and vice versa, any subset of $k$-faces defines a $k$-cochain, i.e. $\{f:X(k) \rightarrow \mathbb{F}_2\} \cong \{A\subset X(k)\}$, where $f\mapsto supp(f)$ and $A\mapsto 1_A$. So, from now we will identify each subset of $k$-faces, $A \subset X(k)$, with the following $k$-cochain $f=1_A \in C^k$.

\begin{dfn}[Norm]
Define the following norm on the space of cochains:
\begin{equation} 
\|\cdot\|=\|\cdot\|^k: C^k \rightarrow [0,1],\quad \|A \| = \sum_{\sigma \in A} w(\sigma)
\quad \mbox{ where } \quad w(\sigma) = \frac{|\{F \in X(d)\,|\, \sigma \subset F\}|}{{d+1 \choose |\sigma|} \cdot |X(d)|}
\end{equation}
\end{dfn}

Note that for any $-1 \leq k \leq d$, and any $A,B \in C^k$ (i.e. $A,B \subset X(k)$), then:
\begin{itemize}
\item $\|A\| = 0$ if and only if $A=\emptyset$, and $\|A\|=1$ if and only if $A=X(k)$.
\item $\|A \cup B\| \leq \|A\| + \|B\|$ with equality if and only if $A$ and $B$ are disjoint.
\end{itemize}

\begin{dfn}[Container] \label{def-container}
Let $X$ be a $d$-complex and let $-1 \leq k \leq r \leq d$.
For any $A \in C^k$, define $\Gamma^r(A) \in C^r$ to be the following $r$-cochain,
\begin{equation}
\Gamma^r(A) = \{F \in X(r)\,|\, \exists \, \sigma \in A,\; \sigma \subset F \}.
\end{equation}
\end{dfn}

\begin{lem} \label{lem-container}
Let $X$ be a $d$-complex and let $-1 \leq k \leq r \leq d$. Then for any $A \in C^k$,
\begin{equation}
\|A\| \leq \|\Gamma^r(A)\| \leq {r+1 \choose k+1} \cdot \|A\|.
\end{equation}
\end{lem}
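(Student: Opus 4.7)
The approach is straightforward double counting, once both norms are rewritten as weighted counts of top-dimensional faces.

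First, expanding the definition of $w(\sigma)$ and switching the order of summation (equivalently, double counting incidences $(\sigma,F)$ with $\sigma \subset F \in X(d)$), I would rewrite
\begin{equation*}
\|A\| = \frac{1}{\binom{d+1}{k+1}\,|X(d)|} \sum_{F \in X(d)} a_F, \qquad a_F := |\{\sigma \in A \,:\, \sigma \subset F\}|,
\end{equation*}
and analogously
\begin{equation*}
\|\Gamma^r(A)\| = \frac{1}{\binom{d+1}{r+1}\,|X(d)|} \sum_{F \in X(d)} b_F, \qquad b_F := |\{\tau \in \Gamma^r(A) \,:\, \tau \subset F\}|.
\end{equation*}

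The heart of the argument is a local comparison of $a_F$ and $b_F$ inside each top-dimensional face. Fixing $F \in X(d)$, I would double count the pairs $(\sigma,\tau)$ with $\sigma \in A$, $\tau \in \Gamma^r(A)$, and $\sigma \subset \tau \subset F$. Counting by $\sigma$: every $\sigma \in A$ with $\sigma \subset F$ sits inside exactly $\binom{d-k}{r-k}$ subsets $\tau \subset F$ of size $r+1$, and every such $\tau$ lies in $\Gamma^r(A)$ by definition of the container. Counting by $\tau$: every $\tau \in \Gamma^r(A)$ with $\tau \subset F$ contains at least one $\sigma \in A$ (again by definition) and at most $\binom{r+1}{k+1}$ of them. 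This sandwiches the pair count and yields the local inequalities
\begin{equation*}
b_F \;\le\; a_F \binom{d-k}{r-k} \;\le\; b_F \binom{r+1}{k+1}.
\end{equation*}

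To finish, I would sum over $F \in X(d)$, substitute back into the two norm expressions, and clean up the constants using the elementary identity
\begin{equation*}
\binom{d+1}{k+1}\binom{d-k}{r-k} \;=\; \binom{d+1}{r+1}\binom{r+1}{k+1},
\end{equation*}
(both sides count flags $\sigma \subset \tau$ with $|\sigma|=k+1$, $|\tau|=r+1$ inside a $(d+1)$-set). The left local inequality then collapses to $\|\Gamma^r(A)\| \le \binom{r+1}{k+1}\|A\|$, and the right one to $\|A\| \le \|\Gamma^r(A)\|$. There is no genuine obstacle; the only point that needs a little care is that the double count must be performed face-by-face, since membership in $\Gamma^r(A)$ is a local existential condition rather than a global one. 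Everything else reduces to binomial bookkeeping.
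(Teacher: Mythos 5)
Your proof is correct and rests on the same double counting of incidences $(\sigma,\tau,F)$ and the same binomial identity $\binom{d+1}{k+1}\binom{d-k}{r-k}=\binom{d+1}{r+1}\binom{r+1}{k+1}$ that the paper uses; the difference is purely organizational. The paper first evaluates $\|\Gamma^r(\{\sigma\})\| = \binom{r+1}{k+1}\,w(\sigma)$ for a singleton (via the same interchange of sums over $\tau$ and $F$) and then derives the two inequalities from subadditivity of the norm together with the multiplicity bound on how many $\Gamma^r(\{\sigma\})$ a given $\tau$ can lie in, whereas you bundle these steps into a single per-top-face comparison of $a_F$ and $b_F$ --- both routes are valid and of comparable length.
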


\begin{proof}
First note that for any $\sigma \in X(k)$, then
\begin{multline}
\|\Gamma^r(\{\sigma\})\| 
= \sum_{\sigma \subset \tau \in X(r)} w(\tau) 
= \frac{1}{|X(d)|} \sum_{\substack{\tau \in X(r) \\ \sigma \subset \tau}} \sum_{\substack{F \in X(d) \\ \tau \subset F }} \frac{1}{{d+1 \choose r+1}}  
= \frac{1}{|X(d)|} \sum_{\substack{F \in X(d) \\ \sigma \subset F }} \sum_{\substack{\tau \in X(r) \\ \sigma \subset \tau \subset F }} \frac{1}{{d+1 \choose r+1}} \\
= \frac{1}{|X(d)|} \sum_{\substack{F \in X(d) \\ \sigma \subset F }} \frac{{d-k \choose r-k}}{{d+1 \choose r+1}} 
= \frac{{d+1 \choose k+1} \cdot {d-k \choose r-k}}{{d+1 \choose r+1}}  w(\sigma) 
= {r+1 \choose k+1} \cdot w(\sigma) 
\end{multline}
So, for any $A\in C^k$, on the one hand,
\begin{equation}
\|\Gamma^r(A)\| = \|\bigcup_{\sigma \in A }\Gamma^r(\{\sigma\})\| \leq \sum_{\sigma \in A} \|\Gamma^r(\{\sigma\})\| = \sum_{\sigma \in A} {r+1 \choose k+1} \cdot w(\sigma) = {r+1 \choose k+1} \cdot \|A\|.
\end{equation}
On the other hand, since each $\tau \in \Gamma^r(A)$ contains at most ${r+1 \choose k+1}$ $k$-faces, i.e. $\tau$ can belong to at most ${r+1 \choose k+1}$ of the $\Gamma^r(\{\sigma\})$, hence, from the previous calculation we get,
\begin{equation}
{r+1 \choose k+1} \cdot \|A\| = \sum_{\sigma \in A} \|\Gamma^r(\{\sigma\})\| 
\leq {r+1 \choose k+1} \cdot \|\bigcup_{\sigma \in A }\Gamma^r(\{\sigma\})\| = {r+1 \choose k+1} \cdot \|\Gamma^r(A)\|,
\end{equation}
which finishes the proof.
\end{proof}

%%%%%%%%%%%%%%%%%%%%%%%%%%%%%%%%%%%%%%%%%%%%%%%%%
\subsection{Links}
Next we introduce the notion of a link of complex, which is a combinatorial analogue of a unit sphere in the complex.
Thus, the links will serve us as local views of the complex. 

\begin{dfn}[Link]
Let $X$ be a $d$-complex and let $\sigma \in X$.
The link of $\sigma$ in $X$ is defined as the following $(d-|\sigma|)$-complex,
\begin{equation}
X_\sigma= \{\tau \in X|\sigma \sqcup \tau \in X\}.
\end{equation}
For any $-1 \leq k \leq d - |\sigma|$, denote by $C^k_\sigma = C^k(X_\sigma,\mathbb{F}_2)$ the space of $k$-cochains of $X_\sigma$,
by $\|\cdot\|_\sigma=\|\cdot\|^k_\sigma: C^k_\sigma \rightarrow [0,1]$ the norm associated to the complex $X_\sigma$,
and by $\delta_\sigma =\delta^k_\sigma : C^k_\sigma \rightarrow C^{k+1}_\sigma$ the $k$-coboundry map.
\end{dfn}

Let us observe the following cases of degenerate links of a $d$-complex:
\begin{itemize}
\item The link of the unique $(-1)$-face, the empty set $\emptyset \in X(-1)$, is the entire complex, $X_\emptyset=X$.
\item The link of a $(d-1)$-face is a $0$-complex, i.e. a collection of isolated vertices.
\end{itemize}
So, in order to avoid these degenerate cases, by a proper link we mean a link of a face $\sigma$ of dimension $0\leq \dim(\sigma)\leq d-2$.

\begin{dfn}[Localization and lifting]
Let $X$ be a $d$-complex and let $\sigma \in X$.
Define the following maps between the original complex, $X$, and the link complex, $X_\sigma$:
\begin{itemize}
\item The first map, called the localization (w.r.t. $\sigma$), takes a cochain of the original complex, and restrict only to the faces which contains $\sigma$, and then delete $\sigma$ from each of them, producing a cochain of the link. Concretely,
\begin{equation} 
I_\sigma : C^* \rightarrow C^{*-|\sigma|}_\sigma, \quad I_\sigma(A) := \{\tau \in X_\sigma\;|\; \tau \sqcup \sigma \in A \}.
\end{equation}
\item The second map, called the lifting (w.r.t. $\sigma$), takes a cochain of the link complex, and adds $\sigma$ to each face in it,
producing a cochain of the original complex. Concretely,
\begin{equation}
I^\sigma : C^*_\sigma \rightarrow C^{*+|\sigma|}, \quad I^\sigma(A) := \{\tau \sqcup \sigma \in X \;|\; \tau \in A  \}.
\end{equation}
\end{itemize}
\end{dfn}

Let us mention a few immediate observations about the localization and lifting operators.
Let $X$ be a $d$-complex, $\sigma \in X$, $|\sigma|\leq k\leq d$, $A\in C_\sigma^{k-|\sigma|}$ and $A'\in C^k$. Then the following holds:
\begin{equation} \label{eq-lift-loc}
I_\sigma \circ I^\sigma(A)=A
\end{equation}
\begin{equation} \label{eq-loc-lift}
I^\sigma \circ I_\sigma(A')=\{\tau \in A' \;|\; \sigma \subset \tau\}
\end{equation}
\begin{equation} \label{eq-lift-cob}
\delta \circ I^\sigma(A) = I^\sigma \circ \delta_\sigma(A)
\end{equation}

The following three Lemmas \ref{lem-norm-global-local}, \ref{lem-norm-inequality} and \ref{norm-link}, describes some relations between the localization and lifting operators and the global and local norms of the complex (i.e. the norms of the original complex $X$ and its links $X_\sigma$, for any $\sigma \in X$).

\begin{lem} \label{lem-norm-global-local}
Let $X$ be a $d$-complex and let $\sigma \in X$.
For any $0 \leq k \leq d-|\sigma|$, and any $A \in C{k-|\sigma|}_\sigma$, then,
\begin{equation}
\|I^\sigma(A)\| = {|\sigma| + k+1 \choose k+1} \cdot w(\sigma) \cdot \|A\|_\sigma.
\end{equation}
\end{lem}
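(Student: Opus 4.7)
The plan is a direct calculation, reducing everything to the bijection between top faces of $X$ containing $\sigma$ and top faces of the link $X_\sigma$. Because $I^\sigma$ is defined on sets, the norm of $I^\sigma(A)$ decomposes as
\[
\|I^\sigma(A)\| \;=\; \sum_{\tau \in A} w(\tau \sqcup \sigma),
\]
so the entire lemma reduces to evaluating the single weight $w(\tau \sqcup \sigma)$ and factoring out $w(\sigma)$ and a combinatorial constant.

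The first step is to express the numerator of $w(\tau \sqcup \sigma)$ in link terms. The map $F \mapsto F \setminus \sigma$ provides a bijection between $\{F \in X(d) : \sigma \subset F\}$ and $X_\sigma(d-|\sigma|)$; under this bijection, containing $\tau \sqcup \sigma$ in $X$ corresponds to containing $\tau$ in $X_\sigma$. Thus
\[
|\{F \in X(d) : \tau \sqcup \sigma \subset F\}| \;=\; |\{G \in X_\sigma(d-|\sigma|) : \tau \subset G\}|,
\]
and in particular $|X_\sigma(d-|\sigma|)| = \binom{d+1}{|\sigma|}\cdot w(\sigma) \cdot |X(d)|$, by the definition of $w(\sigma)$.

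Next I substitute these identities into the definition of $w(\tau \sqcup \sigma)$ and compare with $w_\sigma(\tau)$. Writing out both weights, one finds
\[
\frac{w(\tau \sqcup \sigma)}{w_\sigma(\tau)\cdot w(\sigma)} \;=\; \binom{d-|\sigma|+1}{k+1}\cdot \frac{\binom{d+1}{|\sigma|}}{\binom{d+1}{k+|\sigma|+1}},
\]
and a routine manipulation of factorials collapses the right-hand side to $\binom{k+|\sigma|+1}{k+1}$. The main (and only) obstacle is this binomial simplification, but it is purely algebraic; the key cancellation is that the factors $(d-|\sigma|-k)!$ and $(d-|\sigma|+1)!$ in the numerator annihilate the corresponding factors coming from the denominator, leaving exactly $(k+|\sigma|+1)!/((k+1)!\,|\sigma|!)$.

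Finally, summing $w(\tau \sqcup \sigma) = \binom{k+|\sigma|+1}{k+1}\cdot w(\sigma)\cdot w_\sigma(\tau)$ over $\tau \in A$ and pulling the $\tau$-independent factor out of the sum yields the claimed identity. No structural lemma from earlier in the paper is invoked beyond the definitions of the norms, the link, and the lifting operator $I^\sigma$.
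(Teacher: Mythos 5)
Your proof is correct and follows essentially the same route as the paper's: both reduce to a single weight comparison $w(\tau\sqcup\sigma)$ versus $w(\sigma)\cdot w_\sigma(\tau)$ via the bijection between top faces of $X$ containing $\sigma$ and top faces of $X_\sigma$, and then simplify the resulting ratio of binomials. The paper phrases the intermediate step via the iterated link $(X_\sigma)_\tau = X_{\sigma\cup\tau}$ and the identity $w(\rho)=|X_\rho(d-|\rho|)|/\bigl(\binom{d+1}{|\rho|}|X(d)|\bigr)$, but the computation is the same.
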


\begin{proof}
Denote by, $w= w_X$, the weight function of the original complex, and by, $w_\sigma = w_{X_\sigma}$, the weight function of the link.
In the language of links, the weight norm is interpreted as $w(\tau) = \frac{|X_\tau(d-|\tau|)|}{{d+1 \choose |\tau|} \cdot |X(d)|}$ (and similarly for $w_\sigma$).
Since the norm of a cochain is define by extending linearly the weight function, it is suffice to show the claim for $A$ which is a singleton.
So, for any $\tau \in X_\sigma(k)$ and $A = \{\tau\} \in C^k_\sigma$, we have
\begin{multline}
w(\sigma) \cdot \|A\|_\sigma =  w(\sigma) \cdot w_\sigma(\tau)
= \frac{|X_\sigma(d-|\sigma|)|}{{d+1 \choose |\sigma|} \cdot |X(d)|}
\cdot \frac{|(X_\sigma)_\tau(d-|\sigma|-|\tau|)|}{{d-|\sigma|+1 \choose |\tau|} \cdot |X_\sigma(d-|\sigma|)|} \\
= \frac{|X_\sigma(d-|\sigma|)|}{{d+1 \choose |\sigma|} \cdot |X(d)|}
\cdot \frac{|X_{\sigma\cup\tau}(d-|\sigma\cup\tau|)|}{{d-|\sigma|+1 \choose |\tau|} \cdot |X_\sigma(d-|\sigma|)|}
= \frac{|X_{\sigma\cup\tau}(d-|\sigma\cup\tau|)|}{{d+1 \choose |\sigma|} \cdot {d-|\sigma|+1 \choose |\tau|} \cdot |X(d)|} \\
= \frac{{d+1 \choose |\sigma \cup \tau|}}{{d+1 \choose |\sigma|} \cdot {d-|\sigma|+1 \choose |\tau|} } \cdot w(\sigma \cup \tau) = \frac{1}{{|\sigma| + |\tau| \choose |\tau|}} \cdot \|I^\sigma(A)\|,
\end{multline}
which finishes the proof.
\end{proof}

\begin{lem} \label{lem-norm-inequality}
Let $X$ be a $d$-complex,$\sigma \in X$ and let $|\sigma| \leq k \leq d$. 
\begin{enumerate}
\item For any $A,B\in C_\sigma^{k-|\sigma|}$, then $\|A\|_\sigma\leq \|B\|_\sigma $ if and only if $  \|I^\sigma (A)\| \leq \|I^\sigma(B)\|$.
\item If $A\in C^k$, $B \in C_\sigma^{k-|\sigma|}$ and $\|A\|\leq \|A+I^\sigma(B)\|$, then $\|I_\sigma (A)\|_\sigma \leq \|I_\sigma(A) + B\|_\sigma$.
\end{enumerate}
\end{lem}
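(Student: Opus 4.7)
The plan is to deduce both parts from Lemma \ref{lem-norm-global-local} together with the set-theoretic identities between $I^\sigma$ and $I_\sigma$ collected above.

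For part (1), I would observe that Lemma \ref{lem-norm-global-local}, applied to a cochain $A \in C^{k-|\sigma|}_\sigma$, gives $\|I^\sigma(A)\| = C_{\sigma,k} \cdot \|A\|_\sigma$ for the positive constant $C_{\sigma,k} = \binom{k+1}{k-|\sigma|+1} w(\sigma)$, which depends only on $\sigma$ and $k$. Applying this to both $A$ and $B$ and dividing by the common positive factor yields the equivalence $\|A\|_\sigma \leq \|B\|_\sigma \Longleftrightarrow \|I^\sigma(A)\| \leq \|I^\sigma(B)\|$ at once.

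For part (2), the idea is to split $A \in C^k$ according to whether its faces contain $\sigma$: write $A = A_\sigma \sqcup A_{\bar\sigma}$, where $A_\sigma := \{F \in A : \sigma \subset F\}$ and $A_{\bar\sigma} := A \setminus A_\sigma$. By equation (\ref{eq-loc-lift}), $A_\sigma = I^\sigma(I_\sigma(A))$. Every face in $I^\sigma(B)$ contains $\sigma$, so $I^\sigma(B)$ is disjoint from $A_{\bar\sigma}$, and the symmetric difference decomposes as $A + I^\sigma(B) = (A_\sigma + I^\sigma(B)) \sqcup A_{\bar\sigma}$. Using additivity of $\|\cdot\|$ on disjoint unions, the hypothesis $\|A\| \leq \|A + I^\sigma(B)\|$ reduces to the inequality $\|A_\sigma\| \leq \|A_\sigma + I^\sigma(B)\|$ after cancelling $\|A_{\bar\sigma}\|$ from both sides.

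To finish, note that the lift $I^\sigma$ commutes with symmetric difference: viewed on indicators, the map $\tau \mapsto \tau \sqcup \sigma$ is injective, so adjoining $\sigma$ to every face is $\mathbb{F}_2$-linear. Hence $A_\sigma + I^\sigma(B) = I^\sigma(I_\sigma(A)) + I^\sigma(B) = I^\sigma(I_\sigma(A) + B)$, and the previous inequality reads $\|I^\sigma(I_\sigma(A))\| \leq \|I^\sigma(I_\sigma(A) + B)\|$. Applying part (1) to the pair $I_\sigma(A), I_\sigma(A) + B \in C^{k-|\sigma|}_\sigma$ then gives exactly $\|I_\sigma(A)\|_\sigma \leq \|I_\sigma(A) + B\|_\sigma$. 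There is no serious obstacle here; the only point requiring care is recognising that the lift $I^\sigma(B)$ interacts solely with the $\sigma$-supported portion of $A$, which is precisely the content of the disjoint decomposition $A = A_\sigma \sqcup A_{\bar\sigma}$.
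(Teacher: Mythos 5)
Your proof is correct and takes essentially the same route as the paper's: both arguments split $A$ according to whether its faces contain $\sigma$, observe that $I^\sigma(B)$ is supported entirely on $\sigma$-containing faces so the $\sigma$-avoiding part of $A$ cancels out of the hypothesis, reduce to $\|I^\sigma I_\sigma(A)\| \leq \|I^\sigma I_\sigma(A+I^\sigma(B))\|$, and then invoke part (1). The only cosmetic difference is that you phrase the cancellation as an explicit disjoint decomposition and commute $I^\sigma$ with the symmetric difference, whereas the paper writes the same cancellation as an identity between weighted sums and commutes $I_\sigma$ with the symmetric difference.
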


\begin{proof}
\begin{enumerate}
\item Follows immediately from Lemma \ref{lem-norm-global-local}.
\item First note that, every face of $I^\sigma(B)$ contains $\sigma$, i.e. for any $\sigma \not \subset \tau$, $\tau \in A \; \Leftrightarrow \;\tau \in A+I^\sigma(B)$.
Combining this observation with equation \eqref{eq-loc-lift}, we get 
\begin{equation}
\|A\| - \|I^\sigma I_\sigma(A)\|  = 
\sum_{\substack{\tau \in A \\ \sigma \not \subset \tau}} w(\tau) = \sum_{\substack{\tau \in A+I^\sigma(B) \\ \sigma \not \subset \tau}} w(\tau)
= \|A+I^\sigma(B)\| - \|I^\sigma I_\sigma(A+I^\sigma(B))\|.
\end{equation}
Using now the assumption that $\|A\|\leq \|A+I^\sigma(B)\|$, imply that 
\begin{equation}
\|I^\sigma I_\sigma(A)\| \leq \|I^\sigma I_\sigma(A+I^\sigma(B))\|.
\end{equation}
Finally applying fact 1. on the last inequality, together with equation \eqref{eq-lift-loc}, yields
\begin{equation}
\|I_\sigma (A)\|_\sigma \leq \|I_\sigma(A+I^\sigma(B))\|_\sigma = \|I_\sigma(A) + B\|_\sigma
\end{equation}
which finishes the proof.
\end{enumerate}
\end{proof}

\begin{lem} \label{norm-link}
Let $X$ be a $d$-complex, $0 \leq j \leq k \leq d$, and let $A \in C^k$. Then
\begin{equation}
{k+1 \choose j+1} \cdot \|A\| = \sum_{\sigma \in X(j)} \|I^\sigma(I_\sigma(A))\|.
\end{equation}
\end{lem}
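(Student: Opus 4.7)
The plan is to prove this by a straightforward double counting, exchanging the order of summation in the right-hand side. The identity will follow purely from the combinatorics of $j$-faces contained in a $k$-face, and the norm definition will play no deeper role than linearity.

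First, I would invoke equation \eqref{eq-loc-lift}, which tells us that for each $\sigma\in X(j)$,
\begin{equation}
I^\sigma(I_\sigma(A)) = \{\tau \in A \;|\; \sigma \subset \tau\}.
\end{equation}
Since the norm is defined by $\|B\| = \sum_{\tau \in B} w(\tau)$, this gives
\begin{equation}
\sum_{\sigma \in X(j)} \|I^\sigma(I_\sigma(A))\| = \sum_{\sigma \in X(j)} \sum_{\substack{\tau \in A \\ \sigma \subset \tau}} w(\tau).
\end{equation}

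Next, I would swap the order of summation to obtain
\begin{equation}
\sum_{\tau \in A} w(\tau) \cdot |\{\sigma \in X(j)\;|\; \sigma \subset \tau\}|.
\end{equation}
Because $\tau$ is a $k$-face (so $|\tau|=k+1$) and every $(j+1)$-subset of $\tau$ is automatically a $j$-face of $X$ by the closure of $X$ under inclusion, the inner cardinality is exactly ${k+1 \choose j+1}$. Factoring out this constant and using $\|A\|=\sum_{\tau\in A}w(\tau)$ yields the claim.

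There is no real obstacle here; the only thing to be careful about is that each $j$-subset of $\tau\in X(k)$ genuinely belongs to $X(j)$, which is immediate from $X$ being a simplicial complex. In particular, no expansion, link structure, or properties of $\delta$ are needed beyond \eqref{eq-loc-lift}.
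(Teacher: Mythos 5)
Your proof is correct and matches the paper's argument exactly: both use \eqref{eq-loc-lift} to rewrite $\|I^\sigma(I_\sigma(A))\|$ as a sum of weights over $\tau\in A$ containing $\sigma$, then swap the order of summation and count the ${k+1\choose j+1}$ many $j$-subfaces of each $k$-face.
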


\begin{proof}
By the definition of the norm and Fubini's Theorem,
\begin{equation}
\sum_{\sigma \in X(j)} \|I^\sigma(I_\sigma(A))\| = \sum_{\sigma\in X(j)} \sum_{\sigma \subset \tau \in A} w(\tau)
= \sum_{\tau \in A} \sum_{\sigma \subset \tau} w(\tau) = \sum_{\tau \in A} {k+1 \choose j+1} \cdot w(\tau) = {k+1 \choose j+1} \cdot \|A\|.
\end{equation}
\end{proof}

%%%%%%%%%%%%%%%%%%%%%%%%%%%%%%%%%%%%%%%%%%%%%%%%%
\subsection{Skeletons}
Here we define the notions of skeletons of a complex, which are basically what we get when we forget about the higher dimensional faces of the given complex.

\begin{dfn}[Skeleton]
Let $X$ be a $d$-complex and let $0\leq k \leq d$.
Then the $k$-skeleton of $X$ is defined as the following $k$-complex, 
$$X^{(k)} = \{ \tau \in X | \dim(\tau)\leq k\}.$$
\end{dfn}

For example, the $1$-skeleton of a complex is simply its underlying graph.

Note that for any $d$-complex $X$, and any $0\leq t \leq k \leq d$, the spaces of $t$-cochains of the complex and its $k$-skeleton, are the same,
and for $t<k$, the $t$-coboundary operators of the complex and its $k$-skeleton, are also the same.
On the other hand, the norms of the complex and its $k$-skeleton, might be quite different.
However, if the complex is of bounded degree, then the norms of $X$ and $X^{(k)}$ are proportional.

\begin{lem}
Let $t\leq k\leq d,\; Q \in \mathbb{N}$ and let $X$ be a $d$-complex which is $Q$-bounded degree at dimension $k$, i.e. $\max_{\sigma \in X(k)}|X_\sigma(d-|\sigma|)|\leq Q$. 
Then for any $A\in C^t$,
\begin{equation}
\frac{1}{Q \cdot {d-t \choose k-t}} \cdot \|A\|_X 
\leq \|A\|_{X^{(k)}} 
\leq Q\cdot {d+1 \choose k+1}\cdot \|A\|_X
\end{equation} 
\end{lem}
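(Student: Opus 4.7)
The plan is to reduce the desired inequalities to a pointwise comparison of the two weight functions $w_X$ and $w_{X^{(k)}}$ on individual $t$-faces. Since both norms are additive ($\|A\| = \sum_{\sigma\in A} w(\sigma)$), once I can show $c_1 \cdot w_X(\sigma) \leq w_{X^{(k)}}(\sigma) \leq c_2 \cdot w_X(\sigma)$ for every $\sigma \in X(t)$, summing over the support of $A$ immediately yields the stated bounds. So the entire task becomes: estimate the single ratio $w_{X^{(k)}}(\sigma)/w_X(\sigma)$.

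After unpacking the definitions and rewriting $|\{F \in X(r) : \sigma \subset F\}| = |X_\sigma(r-t-1)|$, the ratio factors as
\begin{equation}
\frac{w_{X^{(k)}}(\sigma)}{w_X(\sigma)} = \frac{|X_\sigma(k-t-1)|}{|X_\sigma(d-t-1)|} \cdot \frac{\binom{d+1}{t+1}}{\binom{k+1}{t+1}} \cdot \frac{|X(d)|}{|X(k)|},
\end{equation}
so I need to bound the two quotients $|X_\sigma(k-t-1)|/|X_\sigma(d-t-1)|$ and $|X(d)|/|X(k)|$. Both come from the same double-counting trick.

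The first step is to count pairs $(\tau,F)$ with $\sigma \subset \tau \subset F$, $\tau \in X(k)$, $F \in X(d)$. Summing over $F$ first gives $|X_\sigma(d-t-1)|\binom{d-t}{k-t}$; summing over $\tau$ first gives $\sum_{\tau\in X(k),\ \sigma\subset\tau}|X_\tau(d-k-1)|$, and since $X$ is pure of dimension $d$ while being $Q$-bounded at level $k$, each term lies in $[1,Q]$. This pins down
\begin{equation}
|X_\sigma(k-t-1)| \;\leq\; \binom{d-t}{k-t}\,|X_\sigma(d-t-1)| \;\leq\; Q\,|X_\sigma(k-t-1)|.
\end{equation}
Applying the identical argument with $\sigma = \emptyset$ (the global version) yields $|X(k)| \leq \binom{d+1}{k+1}|X(d)| \leq Q|X(k)|$. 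The final step is to plug both two-sided bounds into the factored ratio and simplify the binomial coefficients using the identity $\binom{d+1}{t+1}\binom{d-t}{k-t} = \binom{k+1}{t+1}\binom{d+1}{k+1}$ (the standard "committee/subcommittee" identity). The factors telescope neatly and give $1/Q \leq w_{X^{(k)}}(\sigma)/w_X(\sigma) \leq Q$, which comfortably implies the asked-for bounds since $Q\binom{d-t}{k-t} \geq Q$ and $Q\binom{d+1}{k+1} \geq Q$.

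There is no real obstacle here; the proof is a routine double-counting exercise. The one thing to be careful about is correctly reading off $|\{F\in X(d):\sigma\subset F\}|$ as $|X_\sigma(d-t-1)|$ and keeping track of the binomial identity, so that purity (giving the lower bound $\geq 1$) and the $Q$-bounded degree hypothesis (giving the upper bound $\leq Q$) enter symmetrically.
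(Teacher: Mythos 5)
Your proof is correct and takes essentially the same route as the paper: reduce to a pointwise comparison of the two weight functions on a singleton, then use purity (lower bound $\geq 1$) and the $Q$-bounded-degree hypothesis (upper bound $\leq Q$) to relate $|X_\sigma(k-t-1)|$ to $|X_\sigma(d-t-1)|$ and $|X(k)|$ to $|X(d)|$. You are a little more explicit about the double-counting, and by keeping the $\binom{d+1}{k+1}$ and $\binom{d-t}{k-t}$ factors through the whole computation you actually extract the sharper two-sided bound $\tfrac{1}{Q}\, w_X(\sigma) \leq w_{X^{(k)}}(\sigma) \leq Q\, w_X(\sigma)$, whereas the paper discards these factors mid-argument and lands on the stated weaker constants. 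The underlying argument is the same.
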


\begin{proof}
Denote by, $w= w_X$, the weight function of the original complex, and by, $w_k = w_{X^{(k)}}$, the weight function of the skeleton.
Since the norm of a cochain is define by extending linearly the weight function, it is suffice to show the claim for $A$ which is a singleton, i.e. it suffice to prove for any $\sigma \in X(t)$,
\begin{equation}
\frac{1}{Q \cdot {d-t \choose k-t}} \cdot w(\sigma) 
\leq w_k(\sigma) \leq Q \cdot {d+1 \choose k+1}\cdot w(\sigma)
\end{equation}
Next note that, since the complex is pure and of $Q$-bounded degree at dimension $k$, then
\begin{equation}
\frac{1}{{d+1 \choose k+1}} \cdot |X(k)| \leq |X(d)| \leq Q\cdot |X(k)|.
\end{equation} 
Similarly, since that for any $\sigma \in X(t)$, the link, $X_\sigma$, is pure and of $Q$-bounded degree at dimension $k-t-1$, then
\begin{equation}
\frac{1}{{d-t \choose k-t}} \cdot |X_\sigma(k-t-1)| \leq |X_\sigma(d-t-1)| \leq Q\cdot |X_\sigma(k-t-1)|.
\end{equation}
we get that for any $\sigma\in X(t)$,
\begin{equation}
w_k(\sigma) 
= \frac{|X_\sigma(k-t-1)|}{{k+1 \choose t+1}\cdot |X(k)|} 
\leq \frac{{d-t \choose k-t}\cdot Q\cdot |X_\sigma(d-t-1)|}{{k+1 \choose t+1}\cdot |X(d)|} 
= {d+1 \choose k+1}\cdot Q \cdot w(\sigma),
\end{equation}
and 
\begin{equation}
w(\sigma) 
= \frac{|X_\sigma(d-t-1)|}{{d+1 \choose t+1}\cdot |X(d)|} 
\leq \frac{Q \cdot {d+1 \choose k+1} \cdot |X_\sigma(k-t-1)|}{{d+1 \choose t+1} \cdot |X(k)|} 
= {d-t \choose k-t} \cdot Q \cdot w_k(\sigma),
\end{equation}
which finishes the proof.
\end{proof}

Next, we define a property of graph expansion for a complex, called skeleton expansion, which  says that the $1$-skeleton of the complex satisfies a form of weak mixing behavior.

\begin{dfn}[Skeleton expander]
Let $X$ be a $d$-complex and let $\alpha >0$. 
Call $X$ an $\alpha$-skeleton expander, if for any $A \subset X(0)$, then
\begin{equation}
\|E(A,A)\| \leq 4 \cdot (\|A\|^2 +   \alpha \cdot \|A\|)
\end{equation}
where $E(A,A) \subset X(1)$ are the edges in $X$ with both vertices in $A$.
\end{dfn}

In section \S \ref{sec-mix}, we prove a one sided mixing Lemma for the skeleton of a regular complex, which gives a criterion for skeleton expansion in terms of non trivial eigenvalues.

%%%%%%%%%%%%%%%%%%%%%%%%%%%%%%%%%%%%%%%%%%%%%%%%%%
\subsection{High dimensional expansion}
Here we present different notions concerning high dimensional expansion for simplicial complexes, which arose in the works of Linial-Meshulam-Wallach \cite{LinM}, \cite{MW} and Gromov \cite{Gro}.

\begin{dfn}[Coboundary and cocycle expansion]
Let $X$ be a $d$-complex and $0 \leq k < d$.
Define the $k$-dimensional cobonudary expansion parameter of $X$ to be:
\begin{equation}
Exp_b^k(X) = \min \{ \frac{\|\delta(A)\|}{\min_{b\in B^k}\|A+b\|} \quad | \quad A \in C^k \setminus B^k  \}.
\end{equation}
Define the $k$-dimensional cocycle expansion parameter of $X$ to be:
\begin{equation}
Exp_z^k(X) = \min \{ \frac{\|\delta(A)\|}{\min_{z\in Z^k}\|A+z\|} \quad | \quad A \in C^k \setminus Z^k  \}.
\end{equation}
\end{dfn}

Let us spell out what both expansion parameters says in the special case of graphs.
In the graph case $d=1$ and $k=0$, the coboundaries are just $V$ and $\emptyset$, and the cocycles are the unions of connected components of the graph.
The cobonudary expansion parameter is equal to the Cheeger constant of the entire graph (up to some normalization), while the cocycle expansion parameter is equal to the minimum of the Cheeger constants in each connected component of the graph.
So, a large cocycle expansion parameter imply that each connected component of the graph is a good expander on its own,
however the graph itself can be disconnected, in particular not an expander.

Let us recall the definition of coboundary expanders, from the introduction.

\begin{dfn}[Coboundary expander]
A $d$-complex, $X$, is said to be an $\epsilon$-coboundry expander, for some $\epsilon >0$,
if $Exp_b^k(X) \geq \epsilon$, for any $0 \leq k \leq d-1$.
\end{dfn}

The notion of coboundary expansion was first originate in the work of \cite{LinM}, in connection to vanishing of cohomology. (The actual term, coboundary expansion, later came from \cite{DK}). 
Recall that for any $0 \leq k \leq d$, each $k$-coboundary is a $k$-cocycle, i.e. $B^k(X;\mathbb{F}_2) \subset Z^k(X;\mathbb{F}_2) $,
and the $k$-th cohomology of $X$ (in $\mathbb{F}_2$-coefficients) is the quotient space
\begin{equation}
H^k(X;\mathbb{F}_2)  = Z^k(X;\mathbb{F}_2) /B^k(X;\mathbb{F}_2).
\end{equation}
It is a simple exercise to see that the following equivalences holds:
\begin{equation}
Exp_b^k(X) > 0 \quad \Leftrightarrow \quad Z^k(X;\mathbb{F}_2) = B^k(X;\mathbb{F}_2)  \quad \Leftrightarrow \quad H^k(X;\mathbb{F}_2) = 0.
\end{equation}
Furthermore, if the $k$-th cohomology is trivial, hence any $k$-cocycle is a $k$-coboundary, then the $k$-coboundary and $k$-cocycle expansion parameters are equivalent:
\begin{equation}
H^k(X;\mathbb{F}_2) = 0 \quad \Rightarrow \quad Exp_b^k(X) = Exp_z^k(X).
\end{equation}
All in all we get the following equivalent characterization for coboundary expansion: 
\begin{equation}
Exp_b^k(X) \geq \epsilon \quad \Leftrightarrow \quad Exp_z^k(X) \geq \epsilon \quad \mbox{and} \quad Z^k = B^k. 
\end{equation}

As noted by Gromov (see also~\cite{DKW}), this notion of vanishing of cohomology is too strong for some application,
since the existence of a cocycle which is not a coboundary, is acceptable just as long as it is not too small.
This is where the definition of cosystoles comes into play.

\begin{dfn}[Cosystoles] \label{def-cosyst}
Let $X$ be a $d$-complex and $0 \leq k \leq d-1$.
Define the $k$-cosystole of $X$ to be the minimal size of a $k$-cococycle which is not a $k$-coboundary, i.e.
\begin{equation}
Syst^k(X) = \min\{\|z\|\;|\;z \in Z^k(X,\mathbb{F}_2) \setminus B^k(X,\mathbb{F}_2)\}.
\end{equation}
\end{dfn}

Finally, we recall the definition of cosystolic expanders from the introduction.

\begin{dfn}[Cosystolic expander]
A $d$-complex $X$ is said to be an $(\epsilon,\mu)$-cosystolic expander, for some $\epsilon, \mu >0$,
if $Exp_z^k(X) \geq \epsilon$ and $Syst^k(X) \geq \mu$, for any $0 \leq k \leq d-1$.
\end{dfn}

\begin{rem}
In recent years, the notion of cosystoles (and its close cousin, systoles) over $\mathbb{F}_2$, have found applications in the field of quantum error correcting codes (see \cite{GL} and \cite{EOT}), where good lower bounds on the systoles and cosystoles give rise to quantum codes with good parameters.
\end{rem}

%%%%%%%%%%%%%%%%%%%%%%%%%%%%%%%%%%%%%%%%%%%%%%%%%%
\subsection{Minimal cochains}
Let us now introduce the following notions of minimal and locally minimal cochains, which we will need later on to prove our local to global cosystolic expansion criterion.

\begin{dfn}[Mininmal and locally minimal]
A cochain $A \in C^k$ is said to be minimal if
 \begin{equation}
\|A\| = \min \{ \|A + b\| \; | \; b \in B^k(X,\mathbb{F}_2)\}.
\end{equation}
A cochain $A \in C^k$ is said to be locally minimal if for any $\emptyset \ne \sigma \in X$,
the localization of $A$ w.r.t. $\sigma$, $I_\sigma(A)$, is a minimal cochain in the link $X_\sigma$.
\end{dfn}

Note that a $d$-complex, $X$, is an $\epsilon$-coboundary expander if and only if for any $0\leq k\leq d-1$, any minimal $k$-cochain, $A\in C^k$, satisfies $\|\delta(A)\| \geq \epsilon \cdot \|A\|$. 

\begin{lem} 
If $A\in C^k$ is a minimal cochain, then $A$ is a locally minimal cochain.
\end{lem}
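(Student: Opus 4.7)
The plan is to reduce local minimality to the given global minimality via the lifting operator and the already-proved norm inequalities. Fix any nonempty face $\sigma \in X$. If $|\sigma| > k$ the localization lives in dimension $\leq -1$ and the claim is vacuous, so assume $|\sigma| \leq k$ and let $b' \in B^{k-|\sigma|}(X_\sigma;\mathbb{F}_2)$ be an arbitrary link-level coboundary. We must show
\begin{equation}
\|I_\sigma(A)\|_\sigma \leq \|I_\sigma(A) + b'\|_\sigma.
\end{equation}

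First I would lift $b'$ to a genuine coboundary in $X$. Writing $b' = \delta_\sigma(c')$ for some $c' \in C_\sigma^{k-|\sigma|-1}$, identity \eqref{eq-lift-cob} gives
\begin{equation}
\delta\bigl(I^\sigma(c')\bigr) \;=\; I^\sigma\bigl(\delta_\sigma(c')\bigr) \;=\; I^\sigma(b'),
\end{equation}
so $I^\sigma(b') \in B^k$.

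Second, I would apply the minimality hypothesis on $A$ to this particular coboundary, obtaining
\begin{equation}
\|A\| \;\leq\; \|A + I^\sigma(b')\|.
\end{equation}

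Third, I would feed this inequality into part (2) of Lemma~\ref{lem-norm-inequality} (with $B := b'$), whose conclusion is precisely $\|I_\sigma(A)\|_\sigma \leq \|I_\sigma(A) + b'\|_\sigma$. Since $b' \in B^{k-|\sigma|}(X_\sigma;\mathbb{F}_2)$ was arbitrary, $I_\sigma(A)$ is minimal in the link, proving local minimality.

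There is no real obstacle here: all the work is packaged into Lemma~\ref{lem-norm-inequality}(2), and the statement amounts to observing that lifted link-coboundaries are a subclass of global coboundaries, so minimality against all global coboundaries descends to minimality against link-coboundaries after localization. The only care needed is the boundary case $|\sigma| \in \{k+1, k+2, \dots\}$, where the claim is either vacuous or trivial since $B^{k-|\sigma|}(X_\sigma;\mathbb{F}_2) = \{0\}$.
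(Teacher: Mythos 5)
Your proof is correct and follows the same route as the paper's: lift the link-level coboundary via $I^\sigma$, invoke global minimality of $A$, and descend back to the link using Lemma~\ref{lem-norm-inequality}~(2). (You also cite the right part of that lemma; the paper's text actually says ``part 1'' there, which appears to be a typo.)
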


\begin{proof}
Let $\emptyset \ne \sigma \in X$ and let $\gamma \in C^{k-1-|\sigma|}_\sigma$. 
From the minimality of $A$, equation \eqref{eq-lift-cob}, and Lemma \ref{lem-norm-inequality} part 1, we get that
\begin{equation}
\|A\| \leq \|A+\delta I^\sigma (\gamma)\| = \|A + I^\sigma \delta_\sigma(\gamma)\| 
\; \Rightarrow \; \|I_\sigma(A)\|_\sigma \leq \|I_\sigma(A) + \delta_\sigma(\gamma)\|_\sigma.
\end{equation} 
This proves that $A$ is locally minimal.
\end{proof}

\begin{lem} \label{lem-min}
If $A$ is a minimal cochain and $A' \subset A$, then $A'$ is also a minimal cochain.
\end{lem}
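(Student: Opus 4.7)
The plan is to reduce the statement to a pointwise bookkeeping identity relating the two cosets $A + b$ and $A' + b$, and then combine it with the disjointness $A = A' \sqcup C$, where $C := A \setminus A'$.

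First I would fix an arbitrary $b \in B^k$ and aim to prove $\|A'\| \leq \|A' + b\|$. Partition $X(k)$ into the disjoint pieces $A' \cap b$, $A' \setminus b$, $C \cap b$, $C \setminus b$, $b \setminus A$, and everything outside $A \cup b$. Writing out which of these lie in $A + b$ versus $A' + b$ (here "$+$" is symmetric difference, as the paper identifies cochains with subsets), one reads off
\begin{equation}
A + b \;=\; (A'\setminus b)\cup (C\setminus b)\cup (b\setminus A), \qquad A' + b \;=\; (A'\setminus b)\cup (C\cap b)\cup (b\setminus A).
\end{equation}
Since these two sets agree outside $C$, and differ on $C$ by swapping $C\cap b$ with $C\setminus b$, the additivity of the norm on disjoint unions (noted just after the definition of the norm in the Preliminaries) yields the key identity
\begin{equation}
\|A+b\| \;=\; \|A'+b\| + \|C\| - 2\,\|C\cap b\|.
\end{equation}

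Next I would use the disjointness $A = A' \sqcup C$ to write $\|A\| = \|A'\| + \|C\|$. Substituting both into the minimality hypothesis $\|A\|\leq \|A+b\|$ gives
\begin{equation}
\|A'\| + \|C\| \;\leq\; \|A'+b\| + \|C\| - 2\,\|C\cap b\|,
\end{equation}
i.e.\ $\|A'\| \leq \|A'+b\| - 2\,\|C\cap b\| \leq \|A'+b\|$, which is precisely minimality of $A'$ against the coboundary $b$. Since $b \in B^k$ was arbitrary, $A'$ is minimal.

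There is no real obstacle here; the only thing to be careful about is keeping the set-theoretic bookkeeping straight so that the factor $\|C\| - 2\|C\cap b\|$ appears with the correct sign — this sign is exactly what makes the minimality of $A$ force, rather than obstruct, the minimality of the subset $A'$.
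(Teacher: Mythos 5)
Your proof is correct and follows essentially the same route as the paper's: both reduce to showing $\|A+b\| - \|A'+b\| \leq \|A\| - \|A'\|$ for every coboundary $b$, then add $\|A\|\leq\|A+b\|$ to conclude. The paper gets this inequality from the inclusion $(A+b)\setminus(A'+b)\subset A\setminus A'$, whereas you derive the sharper identity $\|A+b\| - \|A'+b\| = \|C\| - 2\|C\cap b\|$ (with $C=A\setminus A'$) and then drop the nonnegative term $2\|C\cap b\|$ --- the same estimate, just made explicit.
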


\begin{proof}
First note that, since $A \setminus A'$ and $A'$ are disjoint, $\|A\|=\|A\setminus A'\| + \|A'\|$.
Next note that, since the sum of two cochains is equal to their symmetric difference, for any cochain $c \in C^k$,
\begin{multline}
(A+c) \setminus (A' + c) = ((A \setminus c) \cup (c \setminus A)) \setminus ((A' \setminus c) \cup (c \setminus A')) \subset \\
\subset ((A \setminus c) \setminus (A' \setminus c)) \cup ((c \setminus A) \setminus (c \setminus A'))
= ((A \setminus c) \setminus (A' \setminus c)) \subset A \setminus A'
\end{multline}
where in the second to last step, the equality follows from the fact that $A' \subset A$.
Hence,
\begin{equation} \label{eq-min}
\|A+c\| - \|A' + c\| \leq \|(A+c) \setminus (A' + c)\| \leq \|A \setminus A'\| = \|A\| - \|A'\|
\end{equation}
Now, if $c \in B^k$ is a coboundry, then by the minimality of $A$ we get,
\begin{equation}
\|A'\| = \|A'\| -\|A\| + \|A\| \leq \|A'\| -\|A\| + \|A + c\| \leq \|A' + c\|
\end{equation}
where the last inequality is \eqref{eq-min}, which finishes the proof.
\end{proof}

%%%%%%%%%%%%%%%%%%%%%%%%%%%%%%%%%%%%%%%%%%%%%%%%%%%%%%%%%%%%%%%%%%%%%%%%%%%%%%%%%%%%%%%%%%%%%%%%%%%
%%%%%%%%%%%%%%%%%%%%%%%%%%%%%%%%%%%%%%%%%%%%%%%%%%%%%%%%%%%%%%%%%%%%%%%%%%%%%%%%%%%%%%%%%%%%%%%%%%%
\section{Cosystolic expansion criterion} \label{sec-main}
In this section we prove the following local to global cosystolic expansion criterion. The following result is slightly stronger Theorem \ref{thm-main-2} from the introduction. 

\begin{thm} \label{thm-cri}
For any $d\in \mathbb{N}$, $\beta > 0$ and $Q\in \mathbb{N}$, 
there exists $\epsilon = \epsilon(d,\beta,Q)>0$, $ \mu = \mu(d,\beta) > 0$ and $\alpha = \alpha(d,\beta) > 0$,
such that if $X$ is a $d$-complex satisfying:
\begin{itemize}
\item The complex $X$ is $Q$-bounded degree, i.e. $\max_{v\in X(0)}|X_v| \leq Q$.
\item The link $X_\sigma$ is a $\beta$-coboundry expander, for any $\sigma\in X$, $1\leq |\sigma| \leq d-1$. 
\item The link $X_\sigma$ is an $\alpha$-skeleton expander, for any $\sigma\in X$, $0\leq |\sigma| \leq d-1$.
\end{itemize}
Then for any $0\leq k\leq d-2$ and any $0\leq r\leq d-1$,
\begin{equation} 
Exp_z^k(X)\geq \epsilon \qquad \mbox{ and } \qquad Syst^r(X) \geq \mu
\end{equation}
In particular, the $(d-1)$-skeleton of $X$ is an $(\epsilon,\mu)$-cosystolic expander.
\end{thm}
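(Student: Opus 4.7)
The plan is to reduce both claims of the theorem to a single local-to-global estimate for minimal cochains, then prove that estimate via a Garland-style sum over vertex links whose error terms are absorbed by the skeleton-expansion hypothesis.

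\textbf{Main inequality.} I would aim to establish the following: there exist $\epsilon=\epsilon(d,\beta,Q)>0$, $\mu=\mu(d,\beta)>0$, and $\alpha=\alpha(d,\beta)>0$ such that every minimal cochain $A\in C^k(X)$ with $0\le k\le d-1$ and $\|A\|\le \mu$ satisfies $\|\delta A\|\ge \epsilon\|A\|$. Given this, the cosystole bound $\mathrm{Syst}^r(X)\ge \mu$ is immediate by contradiction: a minimal representative of $z\in Z^r\setminus B^r$ with $\|z\|<\mu$ would satisfy $0=\|\delta z\|\ge \epsilon\|z\|>0$. The cocycle expansion bound $\mathrm{Exp}_z^k(X)\ge \epsilon$ follows because, for $A\in C^k\setminus Z^k$, a $Z^k$-minimal representative $A^*\in A+Z^k$ is also $B^k$-minimal (as $B^k\subset Z^k$) and hence locally minimal. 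If $\|A^*\|\le \mu$ the inequality applies directly; if $\|A^*\|>\mu$, one extracts a minimal subset $A'\subset A^*$ of norm close to $\mu$ via Lemma~\ref{lem-min} and iterates.

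\textbf{Local-to-global argument.} The starting point is Lemma~\ref{norm-link} applied at $j=0$:
\begin{equation*}
(k+1)\,\|A\| \;=\; \sum_{v\in X(0)}\|I^vI_v(A)\|,\qquad (k+2)\,\|\delta A\| \;=\; \sum_{v\in X(0)}\|I^vI_v(\delta A)\|.
\end{equation*}
A direct face-by-face computation of $\delta A$ on $(k+1)$-faces of the form $\tau\sqcup\{v\}$ yields the decomposition
\begin{equation*}
I_v(\delta A) \;=\; \delta_v I_v(A)\;+\;R_v(A)\qquad\text{in }C^k_v,
\end{equation*}
where $R_v(A)$ is the restriction of $A$ to those $k$-faces lying in the link $X_v$ but not containing $v$. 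Since $A$ is minimal, each localization $I_v(A)$ is minimal in $X_v$; combined with the $\beta$-coboundary expansion of $X_v$ this gives $\|\delta_vI_v(A)\|_v\ge \beta\|I_v(A)\|_v$ for every vertex $v$. Lifting via Lemma~\ref{lem-norm-global-local} and summing over $v$ gives a candidate lower bound for $(k+2)\|\delta A\|$ that must still be netted with the interaction between $\delta_v I_v(A)$ and $R_v(A)$ before yielding a useful bound on $\|\delta A\|$.

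\textbf{Error control via skeleton expansion.} The pairs of $k$-faces $\sigma,\sigma'\in A$ sharing a common lower-dimensional subface drive the cancellation between $\delta_v I_v(A)$ and $R_v(A)$. These pairs are precisely what the skeleton-expansion hypothesis controls: applying the $\alpha$-skeleton expansion of $X$ to the vertex set in the support of $A$ yields $\|E(V_A,V_A)\|\le 4(\|V_A\|^2+\alpha\|V_A\|)$ with $\|V_A\|\lesssim\|A\|$ up to dimension-dependent constants; iterating this through the skeleton expansion of each proper link $X_\sigma$ with $1\le|\sigma|\le k$ controls interactions at all intermediate dimensions. Combining everything produces an inequality of the shape $\|\delta A\|\ge c_1(d)\beta\|A\|-c_2(d)(\alpha\|A\|+\|A\|^2)$; choosing $\alpha<c_1\beta/(4c_2)$ and $\mu<c_1\beta/(4c_2)$ yields $\|\delta A\|\ge \tfrac{c_1\beta}{2}\|A\|$ whenever $\|A\|\le\mu$, giving $\epsilon=c_1\beta/2$.

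\textbf{Main obstacle.} The technical crux is the precise accounting of cancellations. A naive triangle bound on $\sum_v\|I^vR_v(A)\|$ is too lossy: one computes that this sum equals $(k+2)\|A\|$ identically, which then swamps the $\beta(k+1)\|A\|$ gain from local coboundary expansion. Making the argument work therefore requires a finer analysis showing that most of the mass of $R_v(A)$ is either matched by $\delta_vI_v(A)$ (so the two cancel without a net cost) or confined to pairs of $A$-faces sharing a common subface (controlled by skeleton expansion). Propagating this accounting through intermediate dimensions $1\le j\le k$ without the dimension-dependent constants blowing the quadratic error beyond the threshold $\mu$ is, I expect, the heart of the proof.
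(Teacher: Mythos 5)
Your overall reduction — prove an isoperimetric inequality for small minimal cochains, then derive the cosystole and cocycle bounds from it — matches the paper's outline, and your cosystole derivation is essentially the paper's. But there are two genuine gaps.

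\textbf{The cocycle expansion derivation does not close.} When $\|A^*\|>\mu$, extracting $A'\subset A^*$ with $\|A'\|\approx\mu$ and applying the isoperimetric inequality to $A'$ gives a lower bound on $\|\delta A'\|$, but this says nothing about $\|\delta A^*\|$: $(k+1)$-faces in $\delta A'$ can be annihilated by $k$-faces of $A^*\setminus A'$ lying in them, so $\|\delta A'\|$ and $\|\delta A^*\|$ are incomparable, and iterating over such subsets does not repair this. The paper does not work with $A^*$ at all. It conditions on $\|\delta A\|$: if $\|\delta A\|\geq\mu$ the bound is trivial since $\|A\|\leq 1$; otherwise it applies Proposition~\ref{KKL-2.5} to the $(k+1)$-cochain $\delta A$, obtaining $\gamma\in C^k$ with $\delta(A+\gamma)$ locally minimal, $\|\delta(A+\gamma)\|\leq\|\delta A\|\leq\mu$, and $\|\gamma\|\leq Q\|\delta A\|$. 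Since $\delta\delta=0$, the isoperimetric inequality forces $\delta(A+\gamma)=0$, so $A+\gamma\in Z^k$ and $\min_{z\in Z^k}\|A+z\|\leq\|\gamma\|\leq Q\|\delta A\|$. This is the sole use of the $Q$-bounded-degree hypothesis and the reason $\epsilon$ depends on $Q$ while $\mu$ does not; your proposal never invokes $Q$, which is a sign the argument cannot be completed along the proposed lines.

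\textbf{The local-to-global step is a different route from the paper's, and as sketched it does not succeed.} The obstacle you flag — that $\sum_v\|I^vR_v(A)\|=(k+2)\|A\|$ swallows the coboundary-expansion gain — is precisely what the paper's ``fat machinery'' is built to overcome, and the fix you suggest is not the paper's and I do not see how to make it work. The quantity $V_A$ (vertices of the support of $A$) is the wrong object: a bound $\|V_A\|\lesssim\|A\|$ requires the bounded-degree hypothesis and would make $\mu$ depend on $Q$, contradicting $\mu=\mu(d,\beta)$, and even granting it, a single application of skeleton expansion to $E(V_A,V_A)$ does not isolate the cancellations, which occur in links at every intermediate dimension. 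Instead, the paper defines fat $i$-faces recursively by the threshold condition $\|I_\sigma(S^{i+1}_\eta(A))\|_\sigma\geq\eta^{2^{k-i}}$ (Definition~\ref{fat}), proves the size control $\|S^i_\eta(A)\|\leq\eta^{-2^{k-i}}\|A\|$ (Lemma~\ref{lem-fatsize}), restricts the Garland-style sum to fat faces and fat ladders (Proposition~\ref{pro-seep}), and bounds the leftover degenerate faces by applying skeleton expansion link-by-link at every dimension (Proposition~\ref{pro-spec}). The doubly-exponential thresholds $\eta^{2^{k-i}}$ are tuned so that iterating Proposition~\ref{pro-seep} from dimension $k$ down to $-1$ telescopes cleanly and the base case ($\emptyset\notin S^{-1}_\eta(A)$ when $\|A\|\leq\mu$, Corollary~\ref{cor-nofat-1}) terminates the recursion. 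Without restricting to fat faces and without these recursive thresholds, the errors overwhelm the gain exactly as your obstacle paragraph predicts.
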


In order to prove Theorem \ref{thm-cri} we follow \cite{KKL} strategy,
who noticed that the following (co)isoperimetric inequality for small cochains imply cosystolic expansion.

\begin{thm} \label{thm-iso}
For any $d\in \mathbb{N}$ and $\beta > 0$, 
there exists $\bar \epsilon = \bar \epsilon(d,\beta), \bar \mu = \bar \mu(d,\beta) > 0$
and $\alpha = \alpha(d,\beta) > 0$, such that if $X$ is a $d$-complex satisfying:
\begin{itemize}
\item The link $X_\sigma$ is a $\beta$-coboundry expander, for any $\sigma\in X$, $1\leq |\sigma| \leq d-1$. 
\item The link $X_\sigma$ is an $\alpha$-skeleton expander, for any $\sigma\in X$, $0\leq |\sigma| \leq d-1$.
\end{itemize}
Then for any $0\leq k\leq d-1$,
\begin{equation} 
A \in C^k \mbox{ is locally minimal and } \|A\| \leq \bar \mu \quad \Longrightarrow \quad \|\delta(A)\| \geq \bar \epsilon \cdot \|A\|
\end{equation}
\end{thm}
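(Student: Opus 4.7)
My plan is to prove Theorem~\ref{thm-iso} by induction on the cochain dimension $k$, running from $k=0$ up to $k=d-1$ and invoking the link hypotheses at every stage. For the base case $k=0$ the claim is an isoperimetric inequality for vertex subsets $A\subset X(0)$ (local minimality is vacuous). Combining Lemmas~\ref{norm-link} and~\ref{lem-norm-global-local} at $j=0$ gives the weighted handshake identity $2\|A\|=2\|E(A,A)\|+\|\delta A\|$, and applying the $\alpha$-skeleton expansion of $X$ itself (the link at $\emptyset$) yields $\|\delta A\|\ge(2-8\|A\|-8\alpha)\|A\|$, which is at least $\bar\epsilon_0\|A\|$ once $\bar\mu_0$ and $\alpha$ are small enough.

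For the inductive step ($1\le k\le d-1$), fix a locally minimal $A\in C^k(X)$ with $\|A\|\le\bar\mu_k$. The central algebraic identity, obtained by unrolling the coboundary map, is
\[
I_v(\delta A)\;=\;\delta_v I_v(A)\;+\;R_v(A)\qquad\text{in } C^k(X_v),
\]
where $R_v(A)$ restricts $A$ to those $k$-faces $\tau\not\ni v$ with $\tau\sqcup v\in X(k+1)$. Averaging over vertices via Lemmas~\ref{norm-link} and~\ref{lem-norm-global-local} gives the three identities $\|\delta A\|=\sum_v w(v)\|I_v(\delta A)\|_v$, $\|A\|=\sum_v w(v)\|I_v(A)\|_v=\sum_v w(v)\|R_v(A)\|_v$. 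Local minimality of $A$ makes each $I_v(A)$ minimal in $X_v$, so it either vanishes or fails to be a coboundary; in the latter case the $\beta$-coboundary expansion of $X_v$ (which applies since $1\le|v|\le d-1$) gives $\|\delta_v I_v(A)\|_v\ge\beta\|I_v(A)\|_v$, and summing yields $\sum_v w(v)\|\delta_v I_v(A)\|_v\ge\beta\|A\|$.

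The main obstacle, and the place I expect the proof to really bite, is that the identity above is an $\mathbb{F}_2$ sum with potentially massive cancellation between $\delta_v I_v(A)$ and $R_v(A)$: the naive triangle inequality only delivers the useless bound $(\beta-1)\|A\|\le\|\delta A\|$. To extract a genuine expansion I must quantify cancellation. Expanding $\|I_v(\delta A)\|_v=\|\delta_v I_v(A)\|_v+\|R_v(A)\|_v-2\|\delta_v I_v(A)\cap R_v(A)\|_v$ and double counting rewrites the overlap term as
\[
\sum_v w(v)\|\delta_v I_v(A)\cap R_v(A)\|_v\;=\;\|A\|\;-\;\tfrac{1}{k+2}\!\!\!\sum_{\substack{\tau\in A,\ \rho\in\delta A\\ \tau\subset\rho}}\!\! w(\rho),
\]
so the whole problem reduces to controlling the weighted count of containment pairs $(\tau\subset\rho)$ with $\tau\in A$ and $\rho\in\delta A$.

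To control this overlap — the announced low-to-high-dimensional transfer — I will partition the vertices by the size $\|I_v(A)\|_v$. On the ``light'' vertices, $I_v(A)$ is a locally minimal $(k-1)$-cochain in $X_v$ with small norm, and $X_v$ (a $(d-1)$-complex) inherits all the theorem's hypotheses, so the inductive hypothesis gives a sharper expansion $\|\delta_v I_v(A)\|_v\ge\bar\epsilon_{k-1}\|I_v(A)\|_v$ that, fed back into the cancellation analysis, forces the overlap to be small relative to $\|I_v(A)\|_v$. The rare ``heavy'' vertices carry little weighted mass by Markov's inequality applied to $\sum_v w(v)\|I_v(A)\|_v=\|A\|\le\bar\mu$, and the $\alpha$-skeleton expansion of the links $X_v$ (applied to the neighborhood vertex sets of the $k$-faces of $A$ in the link) bounds the concentration of $R_v(A)$ on these heavy vertices. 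Combining these with the coboundary-expansion bound $\sum_v w(v)\|\delta_v I_v(A)\|_v\ge\beta\|A\|$ yields $\|\delta A\|\ge\bar\epsilon_k\|A\|$; closing the induction amounts to choosing $\bar\epsilon_k,\bar\mu_k$ in terms of $d,\beta$ and $\alpha=\alpha(d,\beta)$ sufficiently small that the quantitative estimates propagate from one dimension to the next up to $k=d-1$.
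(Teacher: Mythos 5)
Your approach is genuinely different from the paper's. The paper does not induct on the cochain dimension $k$: instead, for a fixed locally minimal $A\in C^k$, it builds a chain of ``fat face'' cochains $S^i_\eta(A)$ for $i=k,k-1,\dots,-1$ and ``fat ladders'' $L_\eta(A,i)$, and proves (Proposition~\ref{pro-seep}) that iterating the local coboundary expansion down through these dimensions either produces a large $\delta(A)$ or drops to a lower-dimensional fat ladder, up to an error cochain $\Upsilon_\eta(A)$ of ``degenerate'' faces that is bounded directly via skeleton expansion (Proposition~\ref{pro-spec}); a telescoping sum then closes the argument within a single $k$. Your plan of inducting on $k$ and decomposing $I_v(\delta A)=\delta_v I_v(A)+R_v(A)$, averaging over vertices $v$, is an independent route, and your algebraic preliminaries (the averaging identities, the base case $k=0$ via the handshake lemma, and the double-counting identity for the overlap term) all check out.

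However, the inductive step has a genuine gap precisely at the point you flag as ``the place I expect the proof to really bite.'' You correctly reduce the problem to upper-bounding
\[
\sum_v w(v)\bigl\|\delta_v I_v(A)\cap R_v(A)\bigr\|_v
\;=\;
\|A\|-\tfrac{1}{k+2}\!\!\sum_{\substack{\tau\in A,\ \rho\in\delta A\\ \tau\subset\rho}}\!\! w(\rho),
\]
but the two mechanisms you propose do not deliver such a bound.
First, the inductive hypothesis applied to $I_v(A)$ in $X_v$ (for light vertices) yields a \emph{lower} bound of the form $\|\delta_v I_v(A)\|_v\ge\bar\epsilon_{k-1}\|I_v(A)\|_v$, and since the constants can only decrease with dimension, this is no stronger than the $\beta$-coboundary expansion you already used to get $\sum_v w(v)\|\delta_v I_v(A)\|_v\ge\beta\|A\|$. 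More fundamentally, a lower bound on $\|\delta_v I_v(A)\|_v$ places no upper bound on the overlap $\|\delta_v I_v(A)\cap R_v(A)\|_v$; nothing in the inductive hypothesis controls \emph{which} $(k+1)$-faces of $X_v$ appear in $\delta_v I_v(A)$, so it cannot ``force the overlap to be small.'' Second, the Markov step correctly says that vertices with $\|I_v(A)\|_v$ large carry little weight, but the overlap term is not supported only on heavy vertices: a light vertex can still have $R_v(A)\subset\delta_v I_v(A)$. In the worst case the overlap equals $\|A\|$ and the whole argument degrades to $\|\delta A\|\ge(\beta-1)\|A\|$, which is vacuous for $\beta\le1$. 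What is missing is a concrete combinatorial device that converts ``$\tau\in A$ sits next to $\delta_v I_v(A)$'' into a dichotomy (either contributes to $\|\delta A\|$, or witnesses something of controlled size). That is exactly the role of the paper's fat/degenerate dichotomy and Lemma~\ref{lem-ladd}: a $(k+1)$-face meeting a fat ladder at $\sigma$ and another element of $A$ either descends to a deeper fat ladder or contains a dead-end, and dead-ends are rare by skeleton expansion. Your single fat/light split at the vertex level is only the $i=0$ layer of this machinery and, as written, does not close the cancellation.
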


\begin{rem} 
Note that Theorem \ref{thm-iso}, unlike Theorem \ref{thm-cri}, does not require any bounded degree assumption.
This simple fact, makes Theorem \ref{thm-iso} useful also for unbounded degree complexes, as was shown in \cite{LLR}, who proved that for any $d\in \mathbb{N}$, 
there exists $d$-dimensional coboundary expanders, which are of bounded degree at dimension $d-1$, i.e., every $(d-1)$-face is contained in a bounded number of $d$-faces. (The work \cite{LLR} is a generalization of \cite{LubM}).
\end{rem}

\begin{rem} \label{rem-cons}
The constants in Theorem \ref{thm-cri}, are
\begin{multline} 
\mu := \left( \frac{1}{3(d+2)2^{d+3}} \cdot \left(\frac{\beta}{(d+2)\cdot 2^{d+2}}\right)^d \right)^{2^{d+1}}, \quad
\epsilon := \min\{\frac{1}{Q},\mu \}, \quad 
\alpha := \mu^{1+\frac{1}{2^{d+1}}}.
\end{multline}
The constants in Theorem \ref{thm-iso}, are
\begin{multline} 
\bar \mu := \left(\frac{1}{3(d+2)2^{d+3}} \cdot \left(\frac{\beta}{(d+2)\cdot 2^{d+2}}\right)^d \right)^{2^{d+1}}, \quad
\bar \epsilon :=  \frac{1}{3} \cdot \left(\frac{\beta}{(d+2)\cdot 2^{d+2}}\right)^d, \quad 
\alpha := \mu^{1+\frac{1}{2^{d+1}}}.
\end{multline}
We note that we made no attempt to optimize the constants in this work.
\end{rem}

%%%%%%%%%%%%%%%%%%%%%%%%%%%%%%%%%%%%%%%%%%%%%%%%%
\subsection{Sketch of the proof of Theorem \ref{thm-iso}}
Before diving into the proof of Theorem \ref{thm-iso}, which require us to define a few new technical notions, we would like to provide a proof sketch. 

Fix a locally minimal cochain, $A\in C^k$.
We define the notion of fat faces with respect to $A$ in the complex, which essentially says that a face is fat if the localization of $A$ with respect to that face, is large.
Consider the coboundaries of the localization of $A$ with respect to the fat faces, and note that these "local" coboundaries are large by the assumption that the links are cobundary expanders.
The main idea of the proof is that each such "local" coboundary, is either an actual coboundary in the original complex, or it contains a fat face of smaller dimension.
This last claim, is the content of Proposition \ref{pro-seep}, where the term $L_\eta(A,i)$ essentially stands for the sub-cochain of elements in $A$ which contains a fat $i$-faces, 
and $\Upsilon_\eta(A)$ stands for the error term. This error term is negligible due to the skeleton expansion assumption (Proposition \ref{pro-spec}).
So, iterating Proposition \ref{pro-seep}, we get that either $\delta(A)$ is large, or that there is a fat $(-1)$-face.
But the unique $(-1)$-face is fat precisely when $A$ is large, which finishes the proof.

%%%%%%%%%%%%%%%%%%%%%%%%%%%%%%%%%%%%%%%%%%%%%%%%%
\subsection{Fat machinery}
In order to prove the (co)isoperimetric inequality (Theorem \ref{thm-iso}), we first construct a "fat-machinery" which allow us to move calculations
from higher to lower dimensions in the complex.

\begin{dfn}[Fat faces] \label{fat}
Fix a cochain $A \in C^k$ and $0< \eta <1$.
Define inductively the $i$-cochain of fat faces, w.r.t. $A$ and $\eta$, $i=0,\ldots,k$, by
\begin{equation} 
S_\eta^k(A)=A \quad \mbox{ and } \quad S_\eta^{i-1}(A) = \{\sigma \in X(i-1) \, | \, \|I_\sigma(S_\eta^i(A))\|_\sigma \geq \eta^{2^{k-i}} \}
\end{equation}
For $i=-1,0,\ldots,k$, call the elements of $S_\eta^i(A)$ fat faces (w.r.t. $A$ and the fatness constant $\eta$).
\end{dfn}

The following Lemma shows that the sizes of the cochains of fat faces is bounded by the size of the original cochain (up to some constant).

\begin{lem} \label{lem-fatsize}
Let $X$ be a $d$-complex, $-1 \leq i \leq k \leq d$, $A \in C^k$ and $0< \eta <1$. Then,
\begin{equation}
\|S_\eta^i(A)\| \leq \eta^{-2^{k-i}} \cdot \|A\|
\end{equation}
\end{lem}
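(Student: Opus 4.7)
I would prove this by downward induction on $i$, from $i=k$ down to $i=-1$. The base case $i=k$ is immediate, since $S_\eta^k(A)=A$ by definition, and $\eta<1$ gives $\|S_\eta^k(A)\|=\|A\|\leq \eta^{-1}\|A\|=\eta^{-2^{k-k}}\|A\|$.

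For the inductive step, assume the bound at level $i$, and set $S := S_\eta^i(A)\in C^i$. My strategy is to rewrite $\|S\|$ as a weighted average over $(i-1)$-faces of the link-localized norms $\|I_\sigma(S)\|_\sigma$, and then extract the cochain $S_\eta^{i-1}(A)$ by a pigeonhole argument against the fatness threshold $\eta^{2^{k-i}}$. Lemma \ref{norm-link}, applied to $S$ with $j=i-1$, gives ${i+1 \choose i}\|S\|=\sum_{\sigma\in X(i-1)}\|I^\sigma I_\sigma(S)\|$. For each $\sigma\in X(i-1)$ (so $|\sigma|=i$) one has $I_\sigma(S)\in C_\sigma^{0}$, and Lemma \ref{lem-norm-global-local} evaluates the summand as $(i+1)\,w(\sigma)\,\|I_\sigma(S)\|_\sigma$. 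The factors of $i+1$ cancel, leaving the clean identity
\begin{equation}
\|S_\eta^i(A)\|=\sum_{\sigma\in X(i-1)} w(\sigma)\,\|I_\sigma(S_\eta^i(A))\|_\sigma.
\end{equation}

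By the definition of fatness, every $\sigma\in S_\eta^{i-1}(A)$ contributes a term of size at least $\eta^{2^{k-i}}\,w(\sigma)$, so truncating the sum to $\sigma\in S_\eta^{i-1}(A)$ yields $\|S_\eta^i(A)\|\geq \eta^{2^{k-i}}\|S_\eta^{i-1}(A)\|$, i.e.\ $\|S_\eta^{i-1}(A)\|\leq \eta^{-2^{k-i}}\|S_\eta^i(A)\|$. Combining with the inductive hypothesis doubles the exponent:
\begin{equation}
\|S_\eta^{i-1}(A)\|\leq \eta^{-2^{k-i}}\cdot \eta^{-2^{k-i}}\|A\|=\eta^{-2^{k-(i-1)}}\|A\|,
\end{equation}
which closes the induction.

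I do not expect a real obstacle: every ingredient is already contained in the preceding subsection. The only piece of bookkeeping is that the binomial coefficients from Lemmas \ref{norm-link} and \ref{lem-norm-global-local} cancel exactly (because one localizes along a face $\sigma$ whose size matches the codimension being averaged over), producing the probabilistically natural identity that $\|S\|$ is the $w$-weighted mean of its link-localized norms. The doubling of the exponent $2^{k-i}$ at each inductive step is precisely what gives the bound $\eta^{-2^{k-i}}$ appearing in the statement.
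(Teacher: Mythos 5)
Your proof is correct and follows essentially the same route as the paper's: both establish the one-step inequality $\|S_\eta^{i-1}(A)\| \leq \eta^{-2^{k-i}}\|S_\eta^i(A)\|$ from Lemmas \ref{lem-norm-global-local} and \ref{norm-link} together with the fatness threshold, and then chain the steps (you phrase this as a downward induction with exponent-doubling; the paper sums the geometric series $\sum 2^{k-j-1} = 2^{k-i}-1$, which is marginally sharper but is immediately relaxed to $\eta^{-2^{k-i}}$ anyway). The intermediate identity you isolate, $\|S_\eta^i(A)\| = \sum_{\sigma \in X(i-1)} w(\sigma)\,\|I_\sigma(S_\eta^i(A))\|_\sigma$, is a clean restatement of the computation the paper carries out inline.
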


\begin{proof}
For any, $-1 \leq j \leq k-1$, and any fat $j$-face, $\sigma \in S_\eta^j(A)$, 	
by applying Lemma \ref{lem-norm-global-local} on the cochain $I_\sigma(S_\eta^{j+1}(A)) \in C^0_\sigma$, we get
\begin{equation}
w(\sigma) = \frac{\|I^\sigma(I_\sigma(S_\eta^{j+1}(A)))\|}{(j+2) \cdot \|I_\sigma(S_\eta^{j+1}(A))\|_\sigma}
\leq \frac{\eta^{-2^{k-j-1}}}{j+2} \cdot \|I^\sigma(I_\sigma(S_\eta^{j+1}(A)))\|
\end{equation}
Hence, combining this with Lemma \ref{norm-link},
\begin{equation} \label{eq-fat-norm}
\|S_\eta^j(A)\| = \sum_{\sigma \in S_\eta^j(A)} w(\sigma) 
\leq \frac{\eta^{-2^{k-j-1}}}{j+2} \cdot  \sum_{\sigma \in S_\eta^j(A)} \|I^\sigma(I_\sigma(S_\eta^{j+1}(A)))\|
=  \eta^{-2^{k-j-1}} \cdot \|(S_\eta^{j+1}(A))\|
\end{equation}
Hence, by iterating on equation \eqref{eq-fat-norm} for $j=i,\ldots,k-1$, we get,
\begin{multline}
\|S_\eta^i(A)\| \leq \eta^{-2^{k-i-1}} \cdot \|(S_\eta^{i+1}(A))\| \leq \eta^{-(2^{k-i-1} + 2^{k-i-2})} \cdot \|(S_\eta^{i+2}(A))\| \leq ...  \\
... \leq \eta^{-\sum_{j=i}^{k-1} 2^{k-j}} \|S_\eta^k(A)\| = \eta^{-2^{k-i}+1} \cdot \|A\| \leq \eta^{-2^{k-i}} \cdot \|A\|
\end{multline}
which finishes the proof.
\end{proof}

From Lemma \ref{lem-fatsize}, we get the following consequence, which says that for a small cochain the unique $(-1)$-face is a non-fat face.
(This simple fact will serve as the finishing argument in the proof of Theorem \ref{thm-iso}).
\begin{cor} \label{cor-nofat-1}
Let $X$ be a $d$-complex, $k \leq d$, $A \in C^k$ and $0< \eta <1$.
If $\|A\| < \eta^{2^{k+1}} $ then the unique $(-1)$-face, the empty set is not a fat face, i.e. $\emptyset \not \in S_\eta^{-1}(A)$.
\end{cor}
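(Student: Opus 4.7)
The plan is to chase the definition of fatness down to dimension $-1$ and apply Lemma \ref{lem-fatsize}. First I would unfold Definition \ref{fat} at $i=0$, which tells us that $\emptyset\in S_\eta^{-1}(A)$ precisely when $\|I_\emptyset(S_\eta^0(A))\|_\emptyset\geq \eta^{2^{k}}$. Since the link of the empty face is the whole complex, $X_\emptyset=X$, the localization $I_\emptyset$ is the identity and the link norm $\|\cdot\|_\emptyset$ coincides with $\|\cdot\|$. So the condition for $\emptyset$ to be fat simplifies to
\begin{equation}
\|S_\eta^0(A)\|\geq \eta^{2^{k}}.
\end{equation}

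Next I would invoke Lemma \ref{lem-fatsize} with $i=0$, which yields $\|S_\eta^0(A)\|\leq \eta^{-2^{k}}\cdot\|A\|$. Combining this with the hypothesis $\|A\|<\eta^{2^{k+1}}$ gives
\begin{equation}
\|S_\eta^0(A)\|\leq \eta^{-2^{k}}\cdot \|A\| < \eta^{-2^{k}}\cdot \eta^{2^{k+1}} = \eta^{2^{k+1}-2^{k}} = \eta^{2^{k}},
\end{equation}
so the fatness threshold is strictly missed and $\emptyset\notin S_\eta^{-1}(A)$.

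Because the heavy lifting is already encapsulated in Lemma \ref{lem-fatsize}, I do not anticipate any real obstacle; the only thing to be careful about is the exponent bookkeeping (noting that the threshold for a $(-1)$-fat face is $\eta^{2^{k-0}}=\eta^{2^k}$, not $\eta^{2^{k+1}}$, and that the extra factor of $2$ in the exponent of $\|A\|$ is exactly what makes the estimate strict). No additional hypotheses on $X$ — in particular neither coboundary nor skeleton expansion of the links — are needed for this argument.
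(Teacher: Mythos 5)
Your argument matches the paper's proof exactly: both unfold the fatness threshold at dimension $-1$, use that $I_\emptyset$ is the identity and $\|\cdot\|_\emptyset=\|\cdot\|$, and then apply Lemma \ref{lem-fatsize} with $i=0$ together with the hypothesis $\|A\|<\eta^{2^{k+1}}$ to get $\|S_\eta^0(A)\|<\eta^{2^k}$. The exponent bookkeeping is right, and there is nothing to add.
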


\begin{proof}
Note that the empty set has the following interesting property: "A local view by $\emptyset$ is everything",
i.e. for any $Y \subset X$ then $I_\emptyset(Y) = Y$ and $\|\cdot\|_\emptyset = \|\cdot\|$.
So, from Lemma \ref{lem-fatsize} and the assumption on the size of $A$, we get
\begin{equation}
\| I_\emptyset(S_\eta^0(A))\|_\emptyset = \|S_\eta^0(A)\| \leq \eta^{-2^k} \cdot \|A\| < \eta^{2^k}
\end{equation}
hence, by definition, the empty set is not a fat face.
\end{proof}

Next, we define the cochain of degenerate faces,
which intuitively one should think of as the error-term when one is trying to move from higher dimension to lower dimension.

\begin{dfn}[Degenerate faces]
Fix a cochain $A \in C^k$ and $0< \eta <1$.
A dead-end is a pair of two equal sized fat faces, $(\sigma,\sigma')$, whose intersection is a codimension-$1$ non-fat face, i.e.
\begin{equation} 
|\sigma|=|\sigma'|=|\sigma \cap \sigma'|+1,\; \sigma, \sigma' \in S_\eta^*(A), \; \sigma \cap \sigma' \not\in S_\eta^{*-1}(A)
\end{equation}
A face $p \in X$ is said to be degenerate if it contains a dead-end in it,
and define $\Upsilon_\eta(A) \in C^{k+1}$ to be the cochain of all $(k+1)$-faces which are degenerate.
\end{dfn}

Next, we define the notion of a fat ladder.
\begin{dfn}[Fat ladders] \label{ladder}
Fix a cochain $A \in C^k$ and $0< \eta < 1$.
For any fat $i$-face, $\sigma \in S_\eta^i(A)$, define the $k$-cochain of fat-ladders of $A$, siting on $\sigma$, to be
\begin{equation} 
L_\eta(A,\sigma) = \{t \in A \, | \, \exists \; \sigma = \sigma_i \subset \ldots \subset \sigma_k =t, \; \sigma_j \in S_\eta^j(A), \; \forall \, j=i,\ldots,k \}
\end{equation}
Define the $k$-cochain of $i$-fat-ladders of $A$, by $L_\eta(A,i) = \bigcup_{\sigma \in S_\eta^i(A)} L_\eta(A,\sigma)$.
\end{dfn}

The next lemma essentially says that if inside a $(k+1)$-face you have a fat ladder and a fat $k$-face, then you can either go down to a deeper ladder, or you contain a dead end, which makes the ambient face degenerate.

\begin{lem} \label{lem-ladd}
Let $X$ be a $d$-complex, $-1 \leq i\leq k \leq d$, $A \in C^k$ and $0< \eta <1$.
Let $\sigma \in S_\eta^i(A)$,  $t \in L_\eta(A,\sigma)$, $t' \in A$ and $t,t' \subset p \in X(k+1)$.
Then, either $t' \in L_\eta(A,t' \cap \sigma)$ or $p \in \Upsilon_\eta(A)$.
\end{lem}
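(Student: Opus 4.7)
The plan is to construct an explicit fat ladder from $t' \cap \sigma$ to $t'$, under the assumption that $p \notin \Upsilon_\eta(A)$. Let $v$ be the unique vertex of $t \setminus t'$ and $v'$ the unique vertex of $t' \setminus t$, so that $p = t \cup \{v'\} = t' \cup \{v\}$ and the $(k-1)$-face $t \cap t' = p \setminus \{v, v'\}$ is common to both $t$ and $t'$. Mirroring the given chain through the swap $v \leftrightarrow v'$, I set $\tau_j = \sigma_j$ when $v \notin \sigma_j$, and $\tau_j = (\sigma_j \setminus \{v\}) \cup \{v'\}$ when $v \in \sigma_j$. These form an increasing chain of $j$-faces contained in $t'$ with $\tau_k = t'$; moreover $\tau_i$ equals $\sigma = t' \cap \sigma$ when $v \notin \sigma$, and equals $(\sigma \setminus \{v\}) \cup \{v'\}$, one level above $t' \cap \sigma = \sigma \setminus \{v\}$, when $v \in \sigma$. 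The candidate ladder to certify is therefore $t' \cap \sigma \subseteq \tau_i \subset \tau_{i+1} \subset \cdots \subset \tau_k = t'$.

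The core verification is that every face along this candidate ladder belongs to the appropriate $S_\eta^j(A)$. Starting from the fat top face $\tau_k = t' \in A$, I would proceed by downward induction on $j$. The key observation is that for each level $j$ at which $v \in \sigma_j$, the pair $(\sigma_j, \tau_j)$ consists of two $j$-faces of $p$ whose intersection is the codimension-one face $\sigma_j \setminus \{v\}$. The hypothesis that $\sigma_j$ is fat comes from the given chain; once $\tau_j$ is also established to be fat, non-fatness of $\sigma_j \setminus \{v\}$ would turn $(\sigma_j, \tau_j) \subset p$ into a dead-end, contradicting $p \notin \Upsilon_\eta(A)$. Symmetrically, when $v \in \sigma$ the pair $(\sigma, \tau_i)$ handles the base step and forces fatness of $t' \cap \sigma = \sigma \setminus \{v\}$, while the pair $(t, t')$ controls the very top, since the two $k$-faces $t, t' \in A$ are both fat and intersect in $t \cap t'$ inside $p$.

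The main obstacle is the case where $\tau_j$ itself fails to be fat: the pair $(\sigma_j, \tau_j)$ does not then directly witness a dead-end, since that definition requires both members to be fat. I would address this inside the downward induction by first applying the no-dead-end hypothesis one level higher: the fatness of $\tau_{j+1}$ together with $(\sigma_{j+1}, \tau_{j+1})$ not being a dead-end forces the $j$-face $\sigma_{j+1} \setminus \{v\}$ to be fat, and this fat face then plays the role of a ``witness'' sitting beside $\tau_j$ inside $\tau_{j+1} \cup \{v\} \subseteq p$ that can be used to upgrade $\tau_j$ to a fat face (or, failing that, to produce a dead-end through a pair different from $(\sigma_j, \tau_j)$, such as $(t, t')$ with intersection $t \cap t'$ at the very top step). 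Organizing the bookkeeping so that each combination of cases $v \in \sigma$ vs.\ $v \notin \sigma$ and the boundary levels $j = i$ and $j = k$ is covered without gaps is where the combinatorial care is needed; once that is done, the constructed ladder immediately certifies $t' \in L_\eta(A, t' \cap \sigma)$.
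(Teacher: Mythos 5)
Your high-level plan is right: mirror the given fat ladder $\sigma=\sigma_i\subsetneq\cdots\subsetneq\sigma_k=t$ across the one-vertex difference between $t$ and $t'$, and use the absence of a dead-end in $p$ to push fatness down the mirrored chain. But the particular mirror you chose does not work, and the gap you flag in your third paragraph is real and not patchable along the lines you suggest.

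The problem is the substitution chain $\tau_j$. When $v\in\sigma_j$ you set $\tau_j=(\sigma_j\setminus\{v\})\cup\{v'\}$, which contains $v'$. The only fatness information the no-dead-end hypothesis hands you at the step $j+1\to j$ comes from applying it to the pair $(\sigma_{j+1},\tau_{j+1})$: both are fat $(j+1)$-faces of $p$ meeting in the codimension-one face $\sigma_{j+1}\cap\tau_{j+1}=\sigma_{j+1}\setminus\{v\}$, so absence of a dead-end forces $\sigma_{j+1}\setminus\{v\}$ to be fat. But $\sigma_{j+1}\setminus\{v\}$ is \emph{not} $\tau_j$ (it avoids $v'$, while $\tau_j$ contains $v'$), and it differs from $\tau_j$ in two vertices, so their intersection $\sigma_j\setminus\{v\}$ has codimension two in each --- no further dead-end is available from that pair. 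There is simply no mechanism in the fat machinery by which a ``witness'' face sitting next to $\tau_j$ confers fatness on $\tau_j$; fatness of a face is an intrinsic property of its localization, and the degenerate-face hypothesis only rules out certain \emph{pairs} of fat faces with a non-fat codimension-one intersection. So the downward induction stalls exactly where you worried it would, and the fallback ``or produce a dead-end via $(t,t')$'' does not rescue the interior levels.

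The fix is to take the mirrored chain to be the \emph{intersection} chain rather than the substitution chain: set $\sigma'_k=t'$ and $\sigma'_{j-1}=t'\cap\sigma_j$ for $j=i,\dots,k$. These faces avoid $v'$ entirely (except $\sigma'_k=t'$), satisfy $\sigma'_{j-1}=\sigma'_j\cap\sigma_j$, and form an increasing chain from $t'\cap\sigma$ to $t'$ with exactly one repetition at the level where $v$ first enters the original ladder. Then, rather than an induction, one simply takes the largest index $j$ with $\sigma'_j$ non-fat (if any). Since $\sigma'_{j+1}$ and $\sigma_{j+1}$ are then both fat, and one checks (using $\sigma_{j+1}\not\subset t'$, i.e. $v\in\sigma_{j+1}$) that they are equal-dimensional with codimension-one intersection $\sigma'_j$, one gets a dead-end inside $p$, i.e. $p\in\Upsilon_\eta(A)$. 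If no such $j$ exists, deleting the single repetition yields a genuine fat ladder from $t'\cap\sigma$ to $t'$, i.e. $t'\in L_\eta(A,t'\cap\sigma)$. Notice that this is precisely the face $\sigma_{j+1}\setminus\{v\}$ that your dead-end computation was already producing --- the argument forces the intersection chain on you; the $v\leftrightarrow v'$ substitution is the wrong mirror.
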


\begin{proof}
By definition, there exists $ \sigma = \sigma_i \subsetneq \ldots \subsetneq \sigma_k =t$, where all $\sigma_j$ are fat.
Define $\sigma'_k=t'$ and $\sigma'_{j-1} = t' \cap \sigma_j = \sigma'_j \cap \sigma_j$ for any $j=i,\ldots,k$.
If all the $\sigma'_j$ are fat, and since $t' \cap \sigma = \sigma'_{i-1} \subset \ldots \subset \sigma'_k =t'$,
so by removing repetitions if needed ($\sigma'_j = \sigma'_{j+1}$) we get that $t' \in L_\eta(A,t' \cap \sigma)$.
Otherwise, there is a non-fat $\sigma'_j$, and w.l.o.g. we may assume $j$ is maximal, i.e. $\sigma'_{j+1}$ is fat,
and since that $\sigma_{j+1} \not \subset t'$ (otherwise $\sigma'_j = \sigma_{j+1}$ is fat),
we get that $|\sigma_{j+1}| = |\sigma'_{j+1}| = |\sigma_{j+1}\cap \sigma_{j+1}| -1$, hence $p \in \Upsilon_\eta(A)$.
\end{proof}

%%%%%%%%%%%%%%%%%%%%%%%%%%%%%%%%%%%%%%%%%%%%%%%%%
\subsection{Proof of Theorem \ref{thm-iso}}

The following Proposition is the heart of the "fat machinery" which will allow us to move calculations from higher to lower dimensions.
\begin{pro} \label{pro-seep}
Let $k< d \in \mathbb{N}$, $0< \eta <1$, $0 < \beta$ and let
$X$ be a $d$-complex such that for any $\sigma \in X$, $1\leq |\sigma|\leq d-1$, the link, $X_\sigma$, is a $\beta$-coboundary expander.
Then for any locally minimal $k$-cochain, $A \in C^k$, and any $0\leq i \leq k$,
\begin{equation}
\frac{\beta}{{k+2 \choose i+1}} \cdot \|L_\eta(A,i)\| \leq \|\delta(A)\| + (k+2) \cdot \|L_\eta(A,i-1)\| + \|\Upsilon_\eta(A)\|
\end{equation}
\end{pro}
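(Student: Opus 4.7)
The plan is, for each fat $i$-face $\sigma \in S_\eta^i(A)$, to apply coboundary expansion in the link $X_\sigma$ to the localized sub-cochain $f_\sigma := I_\sigma(L_\eta(A,\sigma))$, then sum and lift the resulting local inequalities to obtain the stated global bound. The setup step: since $A$ is locally minimal, $I_\sigma(A)$ is minimal in $X_\sigma$; since $L_\eta(A,\sigma) \subset A$ gives $f_\sigma \subset I_\sigma(A)$, Lemma~\ref{lem-min} (inside the link) shows $f_\sigma$ is minimal in $X_\sigma$, whence the $\beta$-coboundary expansion of $X_\sigma$ yields $\|\delta_\sigma f_\sigma\|_\sigma \geq \beta\|f_\sigma\|_\sigma$. (The degenerate case $i=k$, where $f_\sigma$ is a $(-1)$-cochain and the coboundary hypothesis on $X_\sigma$ is vacuous, works trivially: any complex has $(-1)$-coboundary expansion equal to $1$.)

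I would then lift this inequality to $X$. Every face of $L_\eta(A,\sigma)$ contains $\sigma$, so $I^\sigma I_\sigma L_\eta(A,\sigma) = L_\eta(A,\sigma)$; combined with $\delta\circ I^\sigma = I^\sigma\circ\delta_\sigma$ and Lemma~\ref{lem-norm-global-local}, multiplying by $w(\sigma)$ turns the local inequality into
\[
\|\delta L_\eta(A,\sigma)\| \;\geq\; \beta\cdot\tfrac{k+2}{k-i+1}\,\|L_\eta(A,\sigma)\|.
\]
Summing over $\sigma \in S_\eta^i(A)$ and using $\sum_\sigma \|L_\eta(A,\sigma)\| \geq \|L_\eta(A,i)\|$ (each $t \in L_\eta(A,i)$ belongs to $L_\eta(A,\sigma)$ for at least one $\sigma$, directly from the definition) bounds the sum of local coboundaries from below by $\beta\cdot\tfrac{k+2}{k-i+1}\,\|L_\eta(A,i)\|$.

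The core combinatorial step is the pointwise containment
\[
\delta L_\eta(A,\sigma) \;\subset\; I^\sigma I_\sigma\bigl[\,\delta A \,\cup\, \Upsilon_\eta(A) \,\cup\, \Gamma^{k+1}(L_\eta(A,i-1))\,\bigr].
\]
To prove it, fix $p \in \delta L_\eta(A,\sigma)$ and assume $p \notin \delta A \cup \Upsilon_\eta(A)$. Since $p$ is not degenerate, Lemma~\ref{lem-ladd} forces every $t' \in A$ with $t' \subset p$ and $\sigma \subset t'$ to lie in $L_\eta(A,\sigma)$, so $\{t' \in L_\eta(A,\sigma): t'\subset p\} = \{t' \in A: t'\subset p,\, \sigma\subset t'\}$ and has odd cardinality. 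Since $p \notin \delta A$ makes the total $k$-subface count of $p$ in $A$ even, some $t' \in A$ with $t' \subset p$ and $\sigma \not\subset t'$ must exist; Lemma~\ref{lem-ladd} then places such a $t'$ in $L_\eta(A, t' \cap \sigma)$, and since $|t' \cap \sigma| \leq i$, truncating the witnessing fat ladder gives $t' \in L_\eta(A,i-1)$, whence $p \in \Gamma^{k+1}(L_\eta(A,i-1))$.

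Finally, summing this containment over all $\sigma \in X(i)$, Lemma~\ref{norm-link} converts the sum of lifted norms into $\binom{k+2}{i+1}$ times the right-hand global norm, subadditivity splits the three terms, and Lemma~\ref{lem-container} bounds $\|\Gamma^{k+1}(L_\eta(A,i-1))\| \leq (k+2)\|L_\eta(A,i-1)\|$. Combining with the lower bound from the second paragraph, the identity $\binom{k+2}{i+1}\cdot\tfrac{k-i+1}{k+2} = \binom{k+1}{i+1}$, and $\binom{k+1}{i+1} \leq \binom{k+2}{i+1}$ yields the stated inequality. The main obstacle I expect is the parity/ladder analysis in the third paragraph: one must cleanly combine the odd-parity structure of the local coboundary, Lemma~\ref{lem-ladd} (to force a descent whenever neither $\delta A$ nor $\Upsilon_\eta(A)$ absorbs $p$), and the split of $k$-subfaces of $p$ according to whether they contain $\sigma$.
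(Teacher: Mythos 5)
Your proof is correct and follows essentially the same route as the paper: apply $\beta$-coboundary expansion in each link $X_\sigma$ to the (minimal, by local minimality and Lemma~\ref{lem-min}) localization of $L_\eta(A,\sigma)$, lift back to $X$ via Lemma~\ref{lem-norm-global-local} and equation \eqref{eq-lift-cob}, and use Lemma~\ref{lem-ladd} to show the resulting local coboundaries are contained in $\delta(A) \cup \Gamma^{k+1}(L_\eta(A,i-1)) \cup \Upsilon_\eta(A)$. Your bookkeeping differs slightly from the paper's --- you carry the exact lifting constant and invoke Lemma~\ref{norm-link} to sum over $\sigma \in X(i)$ rather than forming the union $R$ and bounding overlaps, which in fact yields the marginally sharper constant $\binom{k+1}{i+1}$ in place of $\binom{k+2}{i+1}$, and you replace the paper's three-way case split by the equivalent parity count --- but the underlying ideas coincide.
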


\begin{proof}
Let us evaluate the following expression,
\begin{equation}
R = \bigcup_{\sigma \in S^i(A)} I^\sigma \circ \delta_\sigma \circ I_\sigma (L_\eta(A,\sigma)) \subset X(k+1).
\end{equation}
On the one hand, since $A$ is locally minimal, then $I_\sigma(A)$ is minimal, and $I_\sigma(L_\eta(A,\sigma)) \subset I_\sigma(A)$, so by Lemma \ref{lem-min} , $I_\sigma(L_\eta(A,\sigma))$ is also minimal.
So, by the assumption that all the proper links of $X$ are $\beta$-coboundry expanders, we get that for any $\sigma \in S_\eta^i(A)\subset X(i)$, 
\begin{equation} \label{eq-311}
 \beta \cdot \|I_\sigma (L_\eta(A,\sigma))\|_\sigma \leq \|\delta_\sigma \circ I_\sigma(L_\eta(A,\sigma))\|_\sigma.
\end{equation} 
Note that $L_\eta(A,\sigma) \subset \{\tau \in X(k)|\sigma \subset \tau\}$, hence by equation \eqref{eq-loc-lift}, $I^\sigma \circ I_\sigma (L_\eta(A,\sigma)) = L_\eta(A,\sigma)$.
Combining this with equation \eqref{eq-311}, Lemma \ref{lem-norm-inequality} part 1, and the fact that each element of $R$ contains at most ${k+2 \choose i+1}$ faces from $S_\eta^i(A)\subset X(i)$, we get
\begin{equation} \label{eq-pro-seep-1}
\beta \cdot \|L_\eta(A,i)\| \leq \sum_{\sigma \in S_\eta^i(A)} \beta \cdot \|L_\eta(A,\sigma)\| \leq \sum_{\sigma \in S_\eta^i(A)} \|I^\sigma \circ \delta_\sigma \circ I_\sigma(L_\eta(A,\sigma))\| \leq {k+2 \choose i+1}\cdot\|R\|.
\end{equation}
On the other hand, for any $\sigma \in S_\eta^i(A)$ and any $p \in I^\sigma \circ \delta_\sigma \circ I_\sigma(L_\eta(A,\sigma))$, one of the following three possibilities must occur:
\begin{enumerate}
\item All the $k$-faces in $p$ which belongs to $A$, contains $\sigma$, and they are all from $L_\eta(A,\sigma)$. 
In this case $p \in \delta(A)$.
\item All the $k$-faces in $p$ which belongs to $A$, contains $\sigma$, but not all of them are from $L_\eta(A,\sigma)$.
In this case there is some $t' \in A \setminus L_\eta(A,\sigma)$ such that $\sigma \subset t' \subset p$.
Now, since $p \in I^\sigma \circ \delta_\sigma \circ I_\sigma(L_\eta(A,\sigma))$, there must be at least one $t \in L_\eta(A,\sigma)$ such that $t \subset p$.
So, by Lemma \ref{lem-ladd} we get that $p \in \Upsilon_\eta(A)$.
\item There is a $k$-face in $p$ which belongs to $A$, and it does not contain $\sigma$.
In this case there is some, $t' \in A$, such that $\sigma \not \subset t' \subset p$.
Like before, since $p \in I^\sigma \circ \delta_\sigma \circ I_\sigma(L_\eta(A,\sigma))$, there must be at least one $t \in L_\eta(A,\sigma)$ such that $t \subset p$.
And again by Lemma \ref{lem-ladd} we get that either $p \in \Upsilon_\eta(A)$, or $t' \in L_\eta(A,t' \cap \sigma) \subset L_\eta(A,i-1)$,
i.e. $p$ is a $(k+1)$-face which contains a $k$-face from $L_\eta(A,i-1)$, hence by definition \ref{def-container} $p \in \Gamma^{k+1}(L_\eta(A,i-1))$.
\end{enumerate}
In conclusion we get that 
\begin{equation} \label{eq-pro-seep-2}
R \subset \delta(A)  \cup \Gamma^{k+1}(L_\eta(A,i-1)) \cup \Upsilon_\eta(A).
\end{equation}
Combining equation \eqref{eq-pro-seep-1} and \eqref{eq-pro-seep-2}, together with Lemma \ref{lem-container}, yields
\begin{equation}
\frac{\beta}{{k+2 \choose i+1}}  \cdot \|L_\eta(A,i)\| \leq \|R\| \leq \|\delta(A)\| + (k+2) \cdot \|L_\eta(A,i-1)\|  + \|\Upsilon_\eta(A)\|
\end{equation}
which finishes the proof.
\end{proof}

The following Proposition gives an effective bound on the cochain of degenerate faces in terms of the skeleton expansion and the fatness constant.
\begin{pro} \label{pro-spec}
Let $k< d \in \mathbb{N}$, $0< \eta <1$, $0 < \alpha \leq \eta^{2^{k+1}}$ and let
$X$ be a $d$-complex such that for any $\sigma \in X$, $0\leq |\sigma|\leq d-1$, the link, $X_\sigma$, is an $\alpha$-skeleton expander.
Then for any $k$-cochain $A \in C^k$, 
\begin{equation}
\|\Upsilon_\eta(A) \| \leq  (k+2)\cdot 2^{k+4} \cdot \eta \cdot \|A\|.
\end{equation}
\end{pro}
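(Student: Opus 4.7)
The plan is to decompose $\Upsilon_\eta(A)$ by the level $i$ at which the dead-end sits, bound the weight of dead-end unions at each level via the skeleton expansion of an appropriate link, and lift the bound to $(k+1)$-faces through Lemma~\ref{lem-container}.

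For each $0\le i\le k$, let $D_i\subset X(i+1)$ denote the set of $(i+1)$-faces of the form $\sigma\cup\sigma'$ where $(\sigma,\sigma')$ is a level-$i$ dead-end with $\rho:=\sigma\cap\sigma'\in X(i-1)\setminus S_\eta^{i-1}(A)$. Every degenerate $(k+1)$-face contains some $\tau\in D_i$, so Lemma~\ref{lem-container} yields
\[
\|\Upsilon_\eta(A)\|\;\le\;\sum_{i=0}^{k}\|\Gamma^{k+1}(D_i)\|\;\le\;\sum_{i=0}^{k}\binom{k+2}{i+2}\|D_i\|,
\]
reducing the task to bounding each $\|D_i\|$ by a small multiple of $\eta\|A\|$.

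Fix a non-fat $\rho\in X(i-1)\setminus S_\eta^{i-1}(A)$ and set $A_\rho:=I_\rho(S_\eta^i(A))\in C^0_\rho$. Non-fatness gives $\|A_\rho\|_\rho<\eta^{2^{k-i}}$, and level-$i$ dead-ends with intersection $\rho$ correspond bijectively to edges in $E(A_\rho,A_\rho)\subset X_\rho(1)$. The hypothesis guarantees $\alpha$-skeleton expansion of $X_\rho$ for every relevant $\rho$ (including $\rho=\emptyset$, since $|\rho|=0$ is allowed), so with $\alpha\le\eta^{2^{k+1}}\le\eta^{2^{k-i}}$,
\[
\|E(A_\rho,A_\rho)\|_\rho\;\le\;4\bigl(\|A_\rho\|_\rho^2+\alpha\|A_\rho\|_\rho\bigr)\;\le\;8\eta^{2^{k-i}}\|A_\rho\|_\rho.
\]
Lemma~\ref{lem-norm-global-local} converts each link edge into a singleton $(i+1)$-face of weight $\binom{i+2}{2}w(\rho)w_\rho(e)$; union-bounding over non-fat $\rho$ and invoking Lemma~\ref{norm-link} to evaluate $\sum_{\rho\in X(i-1)}w(\rho)\|A_\rho\|_\rho=\|S_\eta^i(A)\|$ yields
\[
\|D_i\|\;\le\;8\binom{i+2}{2}\eta^{2^{k-i}}\|S_\eta^i(A)\|.
\]
The sharp bound visible inside the proof of Lemma~\ref{lem-fatsize}, namely $\|S_\eta^i(A)\|\le\eta^{-2^{k-i}+1}\|A\|$, causes the two powers of $\eta$ to cancel and leaves $\|D_i\|\le 8\binom{i+2}{2}\eta\|A\|$. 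Substituting into the first display and using the identity $\sum_{i=0}^{k}\binom{k+2}{i+2}\binom{i+2}{2}=\binom{k+2}{2}2^k$ produces a bound of the form $C(k)\cdot\eta\|A\|$ with the shape claimed.

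The main technical subtlety is the power-of-$\eta$ bookkeeping. The fat thresholds $\eta^{2^{k-i}}$ double geometrically as $i$ decreases, so Lemma~\ref{lem-fatsize} pays back the enormous factor $\eta^{-2^{k-i}}$; this is cancelled exactly by the quadratic gain $\eta^{2^{k-i}}$ afforded by skeleton expansion at the non-fat face $\rho$, leaving a single leftover $\eta$. The calibration $\alpha\le\eta^{2^{k+1}}$ is the tightest requirement (realised at the bottom level $i=0$, $\rho=\emptyset$), chosen precisely so that the skeleton-expansion estimate dominates the noise term $\alpha\|A_\rho\|_\rho$ uniformly for every level $i=0,\ldots,k$.
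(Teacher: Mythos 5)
Your proof is correct and follows essentially the same strategy as the paper: decompose the degenerate faces by the level of the dead-end, control the weight at each level by applying the $\alpha$-skeleton expansion of the appropriate link $X_\rho$, lift the bound back to $X$ via Lemma~\ref{lem-norm-global-local}, sum over the non-fat $\rho$ via Lemma~\ref{norm-link}, and then invoke Lemma~\ref{lem-fatsize} so that the factor $\eta^{2^{k-i}}$ from non-fatness cancels against the factor $\eta^{-2^{k-i}+1}$ from the fat-size bound, leaving one power of $\eta$. The only deviations are cosmetic: your indexing is shifted by one relative to the paper's (you index by the level of the two fat faces, the paper by the level of the non-fat intersection), you explicitly invoke the sharper $\eta^{-2^{k-i}+1}$ estimate from the proof of Lemma~\ref{lem-fatsize}, and your binomial bookkeeping produces a constant $4(k+1)(k+2)2^k$ rather than the stated $(k+2)2^{k+4}$ (these differ for $k\geq 4$, but the paper expressly declines to optimize constants, and you flag the mismatch yourself).
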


\begin{proof}
For any $t \leq d$ and any $t$-cochain $Y$, denote by $E(Y,Y)$ the $(t+1)$-cochain of $(t+1)$-faces which contains at least two different $t$-faces from $Y$,
and let $\Gamma^{k+1}(Y) \in C^{k+1}$ be as in Lemma \ref{lem-container}.
Then, by the definition of the fat-degenerate faces, we get
\begin{equation}
\Upsilon_\eta(A) 
= \bigcup_{i=-1}^{k-1} \bigcup_{\sigma \in X(i) \setminus S_\eta^i(A)} \Gamma^{k+1}(I^\sigma( E(I_\sigma(S_\eta^{i+1}(A)),I_\sigma(S_\eta^{i+1}(A)))))
\end{equation}
So, by Lemma \ref{lem-container}, we get
\begin{equation} \label{eq-mix-cor1}
\|\Upsilon_\eta(A)\| 
\leq \sum_{i=-1}^{k-1} \sum_{\sigma \in X(i) \setminus S_\eta^i(A)} {k+2 \choose i+2} \cdot \|I^\sigma(E(I_\sigma(S_\eta^{i+1}(A)),I_\sigma(S_\eta^{i+1}(A))))\|
\end{equation}
From the skeleton expansion, we get for any $\sigma \in X(i)$,
\begin{equation}
\|E(I_\sigma(S_\eta^{i+1}(A)),I_\sigma(S_\eta^{i+1}(A)))\|_\sigma 
\leq 4\cdot \|I_\sigma(S_\eta^{i+1}(A))\|_\sigma \cdot ( \|I_\sigma(S_\eta^{i+1}(A))\|_\sigma + \alpha)
\end{equation}
Now,  by the Lemma \ref{lem-norm-global-local}, we can multiply both sides by ${k \choose |\sigma|} \cdot w(\sigma)$, and get
\begin{equation}
\|I^\sigma(E(I_\sigma(S_\eta^{i+1}(A)),I_\sigma(S_\eta^{i+1}(A))))\| 
\leq 4 \cdot \|I^\sigma(I_\sigma(S_\eta^{i+1}(A)))\| \cdot ( \|I_\sigma(S_\eta^{i+1}(A))\|_\sigma + \alpha)
\end{equation}
Next, if $\sigma \in X(i) \setminus S_\eta^i(A)$, then by the definition of fat faces,
\begin{equation}
\|I^\sigma(E(I_\sigma(S_\eta^{i+1}(A)),I_\sigma(S_\eta^{i+1}(A))))\| 
\leq 4 \cdot \|I^\sigma(I_\sigma(S_\eta^{i+1}(A)))\| \cdot (\eta^{2^{k-i}} + \alpha)
\end{equation}
Summing this over all non-fat $i$-faces,
\begin{multline}
\sum_{\sigma \in X(i) \setminus S_\eta^i(A)} \|I^\sigma(E(I_\sigma(S_\eta^{i+1}(A)),I_\sigma(S_\eta^{i+1}(A))))\|
\leq  4(\eta^{2^{k-i}} + \alpha) \cdot \sum_{\sigma \in X(i) \setminus S_\eta^i(A)}\|I^\sigma(I_\sigma(S_\eta^{i+1}(A)))\| \\
\leq 4(\eta^{2^{k-i}} + \alpha) \cdot \sum_{\sigma \in X(i)} \|I^\sigma(I_\sigma(S_\eta^{i+1}(A)))\| 
= 4(\eta^{2^{k-i}} + \alpha) \cdot (i+2) \cdot \|S_\eta^{i+1}(A)\|
\end{multline}
where the last equality follows from Lemma \ref{norm-link}.
Applying Lemma \ref{lem-fatsize}, we get
\begin{equation} \label{eq-mix-cor2}
\sum_{\sigma \in X(i) \setminus S_\eta^i(A)} \|I^\sigma(E(I_\sigma(S_\eta^{i+1}(A)),I_\sigma(S_\eta^{i+1}(A))))\| 
\leq 4(i+2) \cdot (\eta^{2^{k-(i+1)}} + \alpha \cdot \eta^{-2^{k-(i+1)}}) \cdot \|A\|
\end{equation}
Combining equations \eqref{eq-mix-cor1} and \eqref{eq-mix-cor2} together, we get
\begin{multline}
\|\Upsilon_\eta(A)\| \leq \sum_{i=-1}^{k-1} {k+2 \choose i+2} \cdot 4(i+2) \cdot (\eta^{2^{k-(i+1)}} + \alpha \cdot \eta^{-2^{k-(i+1)}}) \cdot \|A\| \\
\leq \left(4 \cdot \sum_{i=-1}^{k-1} {k+2 \choose i+2} \cdot (i+2)\right) \cdot (\eta^{2^{k-k}} + \alpha \cdot \eta^{-2^{k-0}})  \cdot \|A\|  
\leq \left(4(k+2) \cdot \sum_{i=-1}^{k-1} {k+1 \choose i+1}\right) \cdot (\eta + \alpha \cdot \eta^{-2^k})  \cdot \|A\| \\
\leq (k+2)\cdot 2^{k+3} \cdot (\eta + \alpha \cdot \eta^{-2^k})  \cdot \|A\|
\leq (k+2)\cdot 2^{k+3} \cdot 2\eta \cdot \|A\|
\end{multline}
which finishes the proof.
\end{proof}

Finally, we are able to prove the isoperimetric inequality for small cochains (Theorem \ref{thm-iso}).

\begin{proof}[Proof of Theorem \ref{thm-iso}]
Define the following constants,
\begin{multline} 
c_0 := \frac{\beta}{(k+2)\cdot 2^{k+2}}, \quad
\bar \epsilon := \frac{ c_0^k}{2} , \quad 
\eta := \frac{\bar \epsilon}{(k+2)2^{k+4}} , \quad
\bar \mu := \eta^{2^{k+1}} \quad 
\mbox{ and } \quad \alpha := \eta^{2^{k+1}+1}.
\end{multline}
Note that by definition $L_\eta(A,k)=A$, and if $ \|A\| \leq \bar \mu = \eta^{2^{k+1}}$,
then by Corollary \ref{cor-nofat-1} the only $(-1)$-face, the empty set, is non fat (w.r.t. $A$ and $\eta$), and hence $\|L_\eta(A,-1)\|= 0$.
Therefore, for any constant $c \leq 1$ we get,
\begin{equation} \label{eq-telescope}
c^k \cdot \|A\| = \sum_{i=0}^k c^{i-1} (c \cdot \|L_\eta(A,i)\| -  \|L_\eta(A,i-1)\|) \leq \sum_{i=0}^k (c \cdot \|L_\eta(A,i)\| -  \|L_\eta(A,i-1)\|).
\end{equation}
Note that $c_0 \leq 1$ and that $(k+2)\cdot c_0 \leq \frac{\beta}{{k+2 \choose i+1}}$, for any $0 \leq i \leq k$.
So, by applying Proposition \ref{pro-seep} on equation \eqref{eq-telescope}, with the constant $c_0$, we get
\begin{equation}
c_0^k \cdot \|A\| \leq \frac{1}{k+2}\cdot\sum_{i=0}^k (\frac{\beta}{{k+2 \choose i+1}} \cdot \|L_\eta(A,i)\| - (k+2)\cdot \|L_\eta(A,i-1)\|)
\leq \|\delta(A)\| + \|\Upsilon_\eta(A)\|.
\end{equation}
Combining this with Proposition \ref{pro-spec}, we get
\begin{equation} \label{eq-end}
\|\delta(A)\| \geq (c_0^k - (k+2)2^{k+4}\cdot \eta) \cdot \|A\| = \bar \epsilon \cdot \|A\|,
\end{equation}
which finishes the proof.
\end{proof}

%%%%%%%%%%%%%%%%%%%%%%%%%%%%%%%%%%%%%%%%%%%%%%%%%
\subsection{Proof of Theorem \ref{thm-cri}}

The fact that a coisoperimetric inequality for small cochains (Theorem \ref{thm-iso}), implies a cosystolic expansion (Theorem \ref{thm-cri}),
was first shown in \cite[\S~4]{KKL}, but for the sake of being self-contained we add here their argument.

\begin{pro}\cite[Proposition~2.5]{KKL} \label{KKL-2.5}
Let $X$ be a $d$-complex, and define $Q = \max_{v \in X(0)}|X_v|$. Let $0 \leq k \leq d$.
Then for any $A \in C^k$, there exists $\gamma \in C^{k-1}$, which satisfies:
\begin{enumerate}
\item The cochain $A + \delta(\gamma)$ is locally minimal.
\item $ \|A + \delta(\gamma)\| \leq \|A\|$.
\item $\|\gamma\| \leq Q \cdot \|A\|$.
\end{enumerate}
\end{pro}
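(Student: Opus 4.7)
The construction of $\gamma$ is iterative. Initialize $\gamma_0 = 0$ and $A_0 = A$. At each step, if $A_i := A + \delta(\gamma_i)$ is locally minimal, halt and output $\gamma_i$. Otherwise, by definition of local minimality there exists a non-empty $\sigma \in X$ such that $I_\sigma(A_i)$ is not minimal in $X_\sigma$; pick $\eta_i \in C^{k-1-|\sigma|}_\sigma$ witnessing $\|I_\sigma(A_i) + \delta_\sigma(\eta_i)\|_\sigma < \|I_\sigma(A_i)\|_\sigma$, and update $\gamma_{i+1} := \gamma_i + I^\sigma(\eta_i)$. By the commutation relation $\delta \circ I^\sigma = I^\sigma \circ \delta_\sigma$ from equation \eqref{eq-lift-cob}, the newly added coboundary $\delta(I^\sigma(\eta_i)) = I^\sigma(\delta_\sigma(\eta_i))$ is supported only on $k$-faces containing $\sigma$. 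Combining this support property with Lemma \ref{lem-norm-global-local} one obtains
\[
\|A_i\| - \|A_{i+1}\| \;=\; c_\sigma \cdot \bigl(\|I_\sigma(A_i)\|_\sigma - \|I_\sigma(A_i) + \delta_\sigma(\eta_i)\|_\sigma\bigr) \;>\; 0
\]
for a positive constant $c_\sigma$, so the global norm strictly decreases at every step. Since $C^k$ is finite the iteration terminates, and the halting criterion together with the monotonicity of norms yields properties (1) and (2) immediately.

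The bound $\|\gamma\| \leq Q \|A\|$ in (3) is the delicate part. The strategy is to choose $(\sigma, \eta_i)$ at each step so that the cost $\|I^\sigma(\eta_i)\|$ added to $\gamma$ can be charged against the drop $\|A_i\| - \|A_{i+1}\|$, up to a factor controlled by $Q$. A natural schedule is to process the links in order of decreasing $|\sigma|$. At the top level $|\sigma| = k$, the cochain $I_\sigma(A_i) \in C^0_\sigma$ admits only the trivial non-zero local coboundary $1_{X_\sigma(0)} = \delta_\sigma(1_\emptyset)$, and its lift $I^\sigma(1_\emptyset) = 1_{\{\sigma\}}$ adds exactly one $(k-1)$-face of weight $w(\sigma)$ to $\gamma$. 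Summing these contributions and applying Lemma \ref{norm-link} (in the form $\sum_{\sigma \in X(k-1)} \|I^\sigma(I_\sigma(A))\| = (k+1)\|A\|$) controls the top-level contribution by a constant multiple of $\|A\|$. Lower-dimensional $\sigma$ contribute further amounts, and the $Q$-bounded degree hypothesis $\max_{v}|X_v| \leq Q$ is used to bound the ratio between the $w$-weights of nested faces, letting all contributions across link dimensions be summed into a bound of $Q \|A\|$.

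The main obstacle is precisely this book-keeping for (3): modifications made when processing a lower-dimensional link can disturb the local minimality previously attained at a higher-dimensional one, forcing re-processing. The role of the $Q$-bounded degree hypothesis is to ensure this cascade of re-processing stays controlled, so that the aggregate $\|\gamma\|$ does not compound indefinitely but remains within $Q\|A\|$.
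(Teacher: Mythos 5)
Your iterative scheme for properties (1) and (2) is essentially the paper's argument: as long as $A_i$ is not locally minimal, pick $\sigma\neq\emptyset$ and $\eta_i\in C^{k-1-|\sigma|}_\sigma$ whose local coboundary strictly shrinks $\|I_\sigma(A_i)\|_\sigma$, lift and add it; Lemma \ref{lem-norm-inequality} forces a strict global decrease $\|A_{i+1}\|<\|A_i\|$, and finiteness forces termination. The paper phrases this as an induction on the non-negative integer $N(A)={d+1\choose k+1}|X(d)|\,\|A\|$, but it is the same mechanism.

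Where you diverge, and where there is a genuine gap, is item (3). You propose a priority schedule (process links by decreasing $|\sigma|$) and a charging argument against the norm drop, and then you honestly flag that modifications in low dimensions can undo local minimality in high dimensions, potentially causing an uncontrolled cascade — and you leave the control of that cascade to an unjustified appeal to the $Q$-bounded-degree hypothesis. That obstacle is real if one tries to make your schedule rigorous, and it is exactly what the paper sidesteps. The paper's observation is that no scheduling or cascade analysis is needed: the norm takes values in the discrete set $\frac{1}{{d+1\choose k+1}|X(d)|}\mathbb{Z}$, so \emph{any} strict decrease $\|A_i\|-\|A_{i+1}\|$ is at least one quantum $\frac{1}{{d+1\choose k+1}|X(d)|}$, regardless of which $\sigma$ was used or in what order. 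At the same time, the cost added to $\gamma$ in one step is $\|I^\sigma(\eta_i)\|$, and since the support of $\eta_i$ lies in $X_\sigma$ with $|X_\sigma|\le Q$, this cost is at most $Q$ times that same quantum. Summing over all steps gives $\|\gamma\|\le Q\sum_i(\|A_i\|-\|A_{i+1}\|)\le Q\|A\|$, a pure amortization that is oblivious to the order in which links are visited and to any re-processing. So to repair your proof you should drop the scheduling idea entirely and replace it with this integrality-plus-amortization argument; as written, your step (3) does not close.
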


\begin{proof}
First note that $N(A) := {d+1 \choose k+1} \cdot |X(d)| \cdot \|A\|$ is a non-negative integer. We prove the claim by induction on $N(A)$.
In the base case $N(A)=0$, then $A = \emptyset \in C^k$ is the empty $k$-cochain, and the claim holds for $\gamma = \emptyset \in C^{k-1}$ the empty $(k-1)$-cochain.
Assume the claim holds for all cochains $A' \in C^k$ such that $N(A') < N(A)$, i.e. such that $\|A'\| < \|A\|$.

If $A$ is locally minimal, then the claim holds for $\gamma = \emptyset \in C^{k-1}$ the empty $(k-1)$-cochain.
Otherwise, there exist $\emptyset \ne \sigma \in X$, and some $c \in C^{k-1-|\sigma|}_\sigma $, such that
\begin{equation}
\|I_\sigma(A) + \delta_\sigma(c)\|_\sigma < \|I_\sigma(A)\|_\sigma 
\end{equation}
Denote $c'=I^\sigma(c)\in C^{k-1}$. Then by equation \eqref{eq-lift-cob} and Lemma \ref{lem-norm-inequality} part 2, we get
\begin{equation}
\|A+\delta(c')\| <  \|A\|.
\end{equation}
So, $N(A+\delta(c')) < N(A)$, and since both are natural numbers, then $N(A+\delta(c')) \leq N(A)-1$. By the induction assumption there exist $\gamma' \in C^{k-1}$, such that:
\begin{enumerate}
\item $(A+\delta(c'))+\delta(\gamma') = A+\delta(c' + \gamma')$ is a locally minimal cochain,
\item $\|A+\delta(c' + \gamma')\| \leq \|A+\delta(c')\| < \|A\| $, and 
\item $\|\gamma'\| \leq Q \cdot \|A+\delta(c')\| \leq Q \cdot (\|A\| - \frac{1}{{d+1 \choose k+1} \cdot |X(d)|})$.
\end{enumerate}
Hence by taking $\gamma = \gamma' + c$, and noting that $\|c\| \leq \frac{Q}{{d+1 \choose k+1} \cdot |X(d)|}$ (since $c \subset X_\sigma$ and $|X_\sigma|\leq Q$), we get that $\|\gamma\| \leq \|\gamma'\|+\|c\| \leq  Q\cdot \|A\|$, which finishes the proof.
\end{proof}

Now, using Proposition \ref{KKL-2.5} and the isoperimetric inequality for small cochains (Theorem \ref{thm-iso}), we are able to prove the cosystolic expansion criterion (Theorem \ref{thm-cri}).
\begin{proof}[Proof of Theorem \ref{thm-cri}]
Let $\bar \mu$ and $\bar \epsilon$ be the constants from Theorem \ref{thm-iso} and let 
$Q=\max_{v \in X(0)} |X_v|$. Define  $\epsilon = \min\{\bar \mu, \frac{1}{Q}\}$ and $\mu = \bar \mu$.

We begin by proving the cocycle expansion.
Let $A \in C^k$. First note that if $\|\delta(A)\| \geq \mu$, and since $\|A\| \leq 1$ for any cochain, then
\begin{equation}
\|\delta(A) \| \geq \mu \geq \epsilon \geq \epsilon \cdot \|A\|
\end{equation}
So, let us assume $\|\delta(A)\| \leq \mu$, and let $\gamma \in C^k$, be as in Proposition \ref{KKL-2.5} apply on the cochain $\delta(A)$.
Then $\delta(A)+\delta(\gamma)=\delta(A+\gamma)$ is a locally minimal cochain and $\|\delta(A+\gamma)\| \leq \|\delta(A)\| \leq \mu$. 
By Theorem \ref{thm-iso} and the fact that $\delta \circ \delta = 0$ we get that
\begin{equation}
0=\|\delta(\delta(\alpha+\gamma))\| \geq \bar \epsilon \cdot \|\delta(\alpha+\gamma)\| \quad \Rightarrow \quad \delta(\alpha+\gamma)=0.
\end{equation}
So, $A+\gamma \in Z^k$, and hence $\gamma =  A + (A + \gamma) \in \{ A +z \; |\; z \in Z^k\}$.
Now, by Proposition \ref{KKL-2.5} part (3), $\|\gamma\| \leq Q \cdot \|\delta(A)\|$, and we get
\begin{equation}
\|\delta(A) \| \geq \frac{1}{Q} \cdot \|\gamma\| \geq \epsilon \cdot \|\gamma\| = \epsilon \cdot \|A + (A+\gamma)\| \geq \epsilon \cdot \min_{z \in Z^k} \|A + z\|
\end{equation}
which gives us the cocycle expansion $Exp_z^k(X) \geq \epsilon$.

Next we prove the cosystolic bound.
Let $A \in Z^k \setminus B^k$ (if no such cocycle exists there is nothing to prove).
By Proposition \ref{KKL-2.5}, let $A' = A + \delta(\gamma)$ be such that $A'$ is locally minimal and $\|A'\| \leq \|A\|$.
Note that since $A \in Z^k \setminus B^k$ and $\delta(\gamma) \in B^k$ then $A'\in Z^k \setminus B^k$ also.
If $\|A'\| \leq \mu = \bar \mu$, then by Theorem \ref{thm-iso} and the fact that $A'$ is a cocycle, we get
\begin{equation}
0 = \|\delta(A')\| \geq \bar \epsilon \cdot \|A'\| \quad \Rightarrow \quad A' = 0
\end{equation}
which is a contradiction since, $0 \in B^k$, and $A'$ is not in $B^k$. So, $\|A\| \geq \|A'\| \geq \mu$, which gives us the cosystolic bound $Syst^k(X) \geq \mu$.
\end{proof}

%%%%%%%%%%%%%%%%%%%%%%%%%%%%%%%%%%%%%%%%%%%%%%%%%%%%%%%%%%%%%%%%%%%%%%%%%%%%%%%%%%%%%%%%%%%%%%%%%%
%%%%%%%%%%%%%%%%%%%%%%%%%%%%%%%%%%%%%%%%%%%%%%%%%%%%%%%%%%%%%%%%%%%%%%%%%%%%%%%%%%%%%%%%%%%%%%%%%%
\section{Skeleton mixing lemma} \label{sec-mix}
The purpose of this section is to prove a one sided mixing lemma for the $1$-skeleton of a regular complex (see below), i.e. giving a spectral criterion for skeleton-expansion.

\begin{rem}
We note that mixing behavior, spectral gaps and random walks of high dimensional complexes, are subjects that have been already intensively studied in several works (see \cite{EGL}, \cite{GS}, \cite{GW}, \cite{KM1}, \cite{KM2}, \cite{KR}, \cite{Opp} \cite{Par}, \cite{PRT}, \cite{PR} and \cite{Ros}). The mixing Lemma that we present here, is much simpler then the ones appearing in the above mentioned works, and it is actually a result about partite-regular graph, rather then complexes.
\end{rem}

\begin{dfn}[Regular complex]
A $d$-complex $X$, is said to be regular if it satisfies:
\begin{itemize}
\item There exist a partition of the vertices $X(0) = \bigsqcup_{i=0}^d V_i$, such that $X(d) \subset \prod_{i=0}^d V_i$.
\item For any $I \subset J \subset [d]:=\{0,1,\ldots,d\}$, there exist $k_I^J \in \mathbb{N}$, such that each $p \in X \cap \prod_{i \in I} V_i$,
is contained in exactly $k_I^J$ faces from $X \cap \prod_{j\in J} V_j$.
\end{itemize}
\end{dfn}

For example, in the simplest case where $X$ is of dimension $1$, i.e. a graph,
then $X$ is a regular complex if and only if $X$ is a bipartite biregular graph.

\begin{rem} \label{rem-regular}
Note that if $X$ is a regular complex, then so does all of its links and skeletons.
\end{rem}

Next, we define what is the "second eigenvalue" of the $1$-skeleton of a regular complex.

\begin{dfn}[Non-trivial eigenvalue] \label{def-typeind-bipartite}
Let $X$ be a regular $d$-complex.
For any $0 \leq i < j \leq d$, define the $(i,j)$-type induced bipartite graph to be $X_{(i,j)} = (V_i \bigsqcup V_j, X(1) \cap V_i \times V_j)$.
Denote by $\lambda(X_{(i,j)})$ its normalized second largest eigenvalue.
Define the normalized largest non-trivial eigenvalue of (the $1$-skeleton of) $X$ to be
$$\lambda(X) := \max_{i \ne j} \lambda(X_{(i,j)}).$$
\end{dfn}

Let us now state the one sided mixing lemma for the $1$-skeleton of a regular complex.

\begin{pro} \label{partite-mix}
Let $X$ be a regular complex, and let $\lambda(X)$ be its normalized largest non-trivial eigenvalue. Then, for any $A,B \subset X(0)$,
\begin{equation}
\|E(A,B)\| \leq 2(\frac{d+1}{d})\cdot (\|A\| \cdot \|B\| +   \lambda(X) \cdot \sqrt{\|A\| \cdot \|B\|})
\end{equation}
where $E(A,B) \subset X(1)$ are the edges in $X$ with vertices from both $A$ and $B$.
\end{pro}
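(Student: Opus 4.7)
The plan is to reduce the claim to the ordinary expander mixing lemma on each of the type-induced bipartite subgraphs, and then reassemble the pieces using Cauchy--Schwarz. Concretely, I would write $A_i = A \cap V_i$, $B_i = B \cap V_i$, and decompose
\begin{equation*}
E(A,B) = \bigsqcup_{0 \leq i < j \leq d} \bigl( E_{i,j}(A_i,B_j) \cup E_{i,j}(B_i,A_j) \bigr),
\end{equation*}
where $E_{i,j}(S,T) \subseteq X(1) \cap (V_i \times V_j)$ denotes the edges of the bipartite graph $X_{(i,j)}$ going from $S \subseteq V_i$ to $T \subseteq V_j$. The outer union over pairs $i<j$ is disjoint; within a pair, union-bounding will lose at most a factor of $2$, which is harmless.

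Before summing I would translate the weights into the intrinsic norm. Using the regularity of $X$, a direct count (each top face of $X$ contains exactly one vertex of each $V_i$ and exactly one edge of each type $(i,j)$) gives $w(v) = \frac{1}{(d+1)|V_i|}$ for $v \in V_i$ and $w(e) = \frac{1}{\binom{d+1}{2} e_{ij}}$ for $e \in X(1) \cap (V_i \times V_j)$, where $e_{ij}$ is the number of type-$(i,j)$ edges. Thus $|S|/|V_i| = (d+1)\|S\|$ for $S \subseteq V_i$, and $|E_{i,j}(S,T)|/e_{ij} = \binom{d+1}{2}\|E_{i,j}(S,T)\|$. Plugging these into the classical biregular expander mixing lemma
\begin{equation*}
\frac{|E_{i,j}(S,T)|}{e_{ij}} \leq \frac{|S||T|}{|V_i||V_j|} + \lambda(X_{(i,j)}) \sqrt{\frac{|S||T|}{|V_i||V_j|}}
\end{equation*}
and rearranging yields the per-pair bound
\begin{equation*}
\|E_{i,j}(S,T)\| \leq \frac{2(d+1)}{d}\|S\|\|T\| + \frac{2}{d}\lambda(X_{(i,j)}) \sqrt{\|S\|\|T\|}.
\end{equation*}

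To finish, I would sum this inequality over the decomposition of $E(A,B)$. The quadratic terms combine cleanly: $\sum_{i \neq j} \|A_i\|\|B_j\| \leq \sum_{i,j} \|A_i\|\|B_j\| = \|A\|\|B\|$. For the cross terms, Cauchy--Schwarz gives
\begin{equation*}
\sum_{i \neq j} \sqrt{\|A_i\|\|B_j\|} \leq \Bigl(\sum_i \sqrt{\|A_i\|}\Bigr)\Bigl(\sum_j \sqrt{\|B_j\|}\Bigr) \leq (d+1)\sqrt{\|A\|\|B\|},
\end{equation*}
where each factor is bounded by $\sqrt{(d+1)\|A\|}$ (respectively $\sqrt{(d+1)\|B\|}$) by another application of Cauchy--Schwarz on $d+1$ terms. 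Bounding $\lambda(X_{(i,j)}) \leq \lambda(X)$ and collecting constants produces exactly the coefficient $\frac{2(d+1)}{d}$ in front of both $\|A\|\|B\|$ and $\lambda(X)\sqrt{\|A\|\|B\|}$.

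The only genuine obstacle is bookkeeping: one has to be careful that the biregular expander mixing lemma is stated in the correct normalization (the $\lambda$ in Definition~\ref{def-typeind-bipartite} is normalized, i.e., the ratio of the second singular value of the bipartite adjacency matrix to $\sqrt{d_U d_V}$), so that the translation to the complex norm yields the cross-term coefficient $\frac{2}{d}\lambda$ rather than something scale-dependent. Once that is fixed, the summation and the two uses of Cauchy--Schwarz produce the stated inequality on the nose.
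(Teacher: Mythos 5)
Your proof is correct and follows essentially the same route as the paper: decompose $E(A,B)$ into type-$(i,j)$ pieces, convert weights to the complex norm via regularity, apply the biregular mixing lemma (Lemma~\ref{lem-bimix}) on each piece, and reassemble with Cauchy--Schwarz. The only cosmetic differences are that the paper indexes the decomposition by ordered pairs $i \ne j$ while you symmetrize over $i<j$, and the paper bounds $\sum_{i,j}\sqrt{\|A_i\|\|B_j\|}$ via the inequality $(\sum x_\ell)^2 \le N\sum x_\ell^2$ rather than a second Cauchy--Schwarz; these are interchangeable.
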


Note that in the $1$-dimensional case, such a Mixing Lemma is already known.

\begin{lem}\cite[Corollary 3.4]{EGL} \label{lem-bimix}
Let $G=(V_1 \bigsqcup V_2, E)$ be a bipartite biregular graph, and let $\lambda(G)$ be its normalized second largest eigenvalue. Then, for any $A \subset V_1$ and any $B\subset V_2$, 
\begin{equation}
\vert \frac{|E(A,B)|}{|E(X)|} - \frac{|A|}{|V_1|}\frac{|B|}{|V_2|} \vert \leq \lambda(G) \cdot \sqrt{\frac{|A|}{|V_1|}\frac{|B|}{|V_2|}}
\end{equation}
\end{lem}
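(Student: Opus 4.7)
\textbf{Proof proposal for Proposition \ref{partite-mix}.}

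The plan is to reduce the statement to $\binom{d+1}{2}$ applications of the bipartite mixing lemma (Lemma \ref{lem-bimix}), one for each type-induced bipartite graph $X_{(i,j)}$, and then to aggregate the resulting estimates by Cauchy--Schwarz.

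First I would unwind how the global norm $\|\cdot\|$ on $X(0)$ and $X(1)$ decomposes along the partition $X(0) = \bigsqcup_{i=0}^d V_i$. Using regularity, every $d$-face meets each $V_i$ in exactly one vertex, so $|X(d)| = k_{\{i\}}^{[d]} \cdot |V_i|$ and each vertex $v \in V_i$ has weight $w(v) = 1/((d+1)|V_i|)$; similarly each edge $e \in X_{(i,j)}(1)$ has weight $w(e) = 1/(\binom{d+1}{2} |X_{(i,j)}(1)|)$. Consequently, writing $A_i := A \cap V_i$ and $B_j := B \cap V_j$, we get the clean identities $\|A_i\| = |A_i|/((d+1)|V_i|)$ and
\begin{equation}
\|E(A_i,B_j)\| \;=\; \frac{1}{\binom{d+1}{2}} \cdot \frac{|E(A_i,B_j)|}{|X_{(i,j)}(1)|},
\end{equation}
which is precisely the normalization appearing in Lemma \ref{lem-bimix}.

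Next I would apply Lemma \ref{lem-bimix} to the bipartite biregular graph $X_{(i,j)}$ for each ordered pair $i \ne j$, using $\lambda(X_{(i,j)}) \le \lambda(X)$. Converting via the identities above and using $|A_i|/|V_i| = (d+1)\|A_i\|$ yields, after cancelling $(d+1)^2/\binom{d+1}{2} = 2(d+1)/d$ and $(d+1)/\binom{d+1}{2} = 2/d$, the per-type estimate
\begin{equation}
\|E(A_i,B_j)\| \;\le\; \frac{2(d+1)}{d}\, \|A_i\|\|B_j\| \;+\; \frac{2}{d}\, \lambda(X)\, \sqrt{\|A_i\|\|B_j\|}.
\end{equation}
Since every edge of $X$ lies in a unique $X_{(i,j)}$ with $i<j$, the set $E(A,B)$ splits as $\bigsqcup_{i<j}\bigl(E(A_i,B_j) \cup E(A_j,B_i)\bigr)$, and so $\|E(A,B)\| \le \sum_{i\ne j}\|E(A_i,B_j)\|$ where the sum is over ordered pairs.

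Finally I would aggregate. The product term is immediate: $\sum_{i\ne j}\|A_i\|\|B_j\| \le \|A\|\|B\|$. For the square-root term I apply Cauchy--Schwarz to get $\sum_i \sqrt{\|A_i\|} \le \sqrt{(d+1)\|A\|}$ and likewise for $B$, hence
\begin{equation}
\sum_{i\ne j} \sqrt{\|A_i\|\|B_j\|} \;\le\; \Bigl(\sum_i \sqrt{\|A_i\|}\Bigr)\Bigl(\sum_j \sqrt{\|B_j\|}\Bigr) \;\le\; (d+1)\sqrt{\|A\|\|B\|}.
\end{equation}
Combining these gives exactly $\|E(A,B)\| \le 2\frac{d+1}{d}\bigl(\|A\|\|B\| + \lambda(X)\sqrt{\|A\|\|B\|}\bigr)$. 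The only step that requires genuine care is the weight bookkeeping in the first paragraph, since a sign error there would throw off the final constant; the Cauchy--Schwarz loss of $(d+1)$ is precisely what produces the $(d+1)/d$ factor on the spectral term, matching the statement.
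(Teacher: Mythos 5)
The statement you were asked to prove is Lemma \ref{lem-bimix}, the expander mixing lemma for bipartite biregular graphs, which the paper imports without proof from \cite{EGL}. Your write-up instead proves Proposition \ref{partite-mix}, the skeleton mixing lemma, and explicitly invokes Lemma \ref{lem-bimix} as a black box inside it. So as a proof of the stated result it is off-target: nowhere do you supply the spectral argument that actually establishes the bipartite mixing bound. A proof of Lemma \ref{lem-bimix} would decompose the indicator vectors as $\mathbf{1}_A = \frac{|A|}{|V_1|}\mathbf{1}_{V_1} + f_A$ and $\mathbf{1}_B = \frac{|B|}{|V_2|}\mathbf{1}_{V_2} + f_B$ with $f_A \perp \mathbf{1}_{V_1}$ and $f_B \perp \mathbf{1}_{V_2}$, observe that $|E(A,B)| = \mathbf{1}_A^{\mathsf T} M \mathbf{1}_B$ for $M$ the adjacency operator, isolate the main term $\frac{|A|}{|V_1|}\frac{|B|}{|V_2|}|E|$ coming from the Perron--Frobenius directions (eigenvalues $\pm\sqrt{k_1 k_2}$), and bound the remainder $|f_A^{\mathsf T} M f_B| \le \lambda_2 \|f_A\|\,\|f_B\| \le \lambda_2\sqrt{|A|\,|B|}$, finishing with the biregularity identity $|E| = \sqrt{k_1 k_2 |V_1| |V_2|}$ to convert into the normalized form. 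None of this appears in your proposal; citing the lemma you were asked to prove does not discharge the task.

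That said, your argument for Proposition \ref{partite-mix} is correct and tracks the paper's own proof of that proposition essentially line for line: the same weight identities $\|A_i\| = \frac{1}{d+1}\frac{|A_i|}{|V_i|}$ and $\|E(A_i,B_j)\| = \frac{1}{\binom{d+1}{2}}\frac{|E(A_i,B_j)|}{|X_{(i,j)}(1)|}$, the same per-type bound with leading constant $2(d+1)/d$ after applying Lemma \ref{lem-bimix} and using $\lambda(X_{(i,j)}) \le \lambda(X)$, the same disjoint split of $E(A,B)$ over ordered pairs $i \ne j$, the same trivial bound $\sum_{i\ne j}\|A_i\|\|B_j\| \le \|A\|\|B\|$ for the product term, and the same Cauchy--Schwarz aggregation producing the factor $d+1$ on the square-root term. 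The only cosmetic difference is that you run Cauchy--Schwarz separately on each factor and multiply, while the paper applies it once to the double sum; both give $(d+1)\sqrt{\|A\|\,\|B\|}$. So if the assigned target had been Proposition \ref{partite-mix}, this would be a faithful reproduction of the paper's proof. For Lemma \ref{lem-bimix} itself, you have supplied no proof.
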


This bipartite mixing lemma will imply the general skeleton mixing lemma.

\begin{proof}[Proof of Proposition \ref{partite-mix}]
First note that since $X$ is a regular complex, and let $X(0) = \bigsqcup_{i=0}^d V_i$ be the partition, then for any $I \subset [d]$ and $A \subset X \cap \prod_{i \in I} V_i$,
\begin{equation} 
{d+1 \choose |I|} \cdot \|A\| = \sum_{\sigma \in A} \frac{|\{F \in X(d)\,|\,\sigma \in F\}|}{|X(d)|}
= \frac{|A| \cdot k_I^{[d]}}{|X \cap \prod_{i \in I} V_i|\cdot k_I^{[d]} } = \frac{|A| }{|X \cap \prod_{i \in I} V_i|}
\end{equation}
In particular, for any $i,j\in [d]$ and any $A \subset V_i, B \subset V_j$,
\begin{equation} 
\|A\| = \frac{1}{{d+1 \choose 1}} \frac{|A|}{|V_i|},\; \|B\| = \frac{1}{{d+1 \choose 1}}\frac{|B|}{|V_j|}, \mbox{ and }
\|E(A,B)\| = \frac{1}{{d+1 \choose 2}} \frac{|E(A,B)|}{|X(1)\cap V_i \times V_j|}
\end{equation}
So restating Lemma \ref{lem-bimix} in terms of the complex norm, we get for any $A \subset V_i, B \subset V_j$,
\begin{multline}
\|E(A,B)\| \leq \frac{{d+1 \choose 1}^2}{{d+1 \choose 2}} \|A\| \cdot \|B\| + \lambda(X_{(i,j)}) \frac{{d+1 \choose 1}}{{d+1 \choose 2}} \sqrt{\|A\| \cdot \|B\|}\\
\leq 2(\frac{d+1}{d})\cdot  (\|A\| \cdot \|B\| + \frac{\lambda(X)}{d+1} \cdot \sqrt{\|A\| \cdot \|B\|})
\end{multline}  	
Now, let $A,B \subset X(0)$, and denote $A_i = A \cap V_i$ and $B_i = B \cap V_i$ for any $0 \leq i \leq d$.
Since $X$ is partite, $E(A,B) = \bigsqcup_{i \ne j} E(A_i,B_j)$, hence
\begin{equation} 
\|E(A,B)\| = \sum_{i \ne j} \|E(A_i,B_j)\| \leq 2(\frac{d+1}{d})\cdot \sum_{i \ne j} (\|A_i\| \cdot \|B_j\| + \frac{\lambda(X)}{d+1}\sqrt{\|A_i\| \cdot \|B_j\|})
\end{equation}
Similarly, since $A = \bigsqcup_i A_i$ and $B = \bigsqcup_j B_j$, then
\begin{equation}
\sum_{i \ne j} \|A_i\| \cdot \|B_j\| \leq (\sum_{i} \|A_i\|  )\cdot(\sum_{j}  \|B_j\| ) = \|A\| \cdot \|B\|
\end{equation}
Next, note that for any $N$ non-negative numbers $x_1,\ldots,x_N \in \mathbb{R}_{\geq 0}$, one has
\begin{equation}
(\sum_{i=1}^N x_i)^2 \leq N \cdot \max_{1 \leq i \leq N} (x_i^2) \leq N \cdot \sum_{i=1}^N x_i^2
\end{equation}
Applying this for $N=(d+1)^2$ and $x_i = \sqrt{\|A_i\|\cdot \|B_j\|}$, we get
\begin{equation}
\sum_{i,j} \sqrt{\|A_i\| \cdot \|B_j\|} \leq \sqrt{(d+1)^2 \cdot \sum_{i, j} \|A_i\| \cdot \|B_j\|} \leq (d+1) \cdot \|A\| \cdot \|B\|
\end{equation}
which finishes the proof.
\end{proof}

In particular, since $2(\frac{d+1}{d}) \leq 4$ for any $d \in \mathbb{N}$, we get the following.
\begin{cor} \label{cor-skeleton-expansion}
Let $X$ be a regular $d$-complex. Then $X$ is an $\lambda(X)$-skeleton expander.
\end{cor}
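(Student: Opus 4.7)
The plan is to simply specialize Proposition \ref{partite-mix} to the diagonal case $B = A$ and then absorb the constant into the definition of skeleton expansion. Specifically, I would start from the bound
\begin{equation}
\|E(A,B)\| \leq 2\left(\tfrac{d+1}{d}\right)\cdot \bigl(\|A\|\cdot\|B\| + \lambda(X)\cdot\sqrt{\|A\|\cdot\|B\|}\bigr)
\end{equation}
already established in Proposition \ref{partite-mix}, and set $B = A$ to obtain
\begin{equation}
\|E(A,A)\| \leq 2\left(\tfrac{d+1}{d}\right)\cdot \bigl(\|A\|^2 + \lambda(X)\cdot \|A\|\bigr).
\end{equation}

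The second step is a trivial numerical observation: for every $d\in\mathbb{N}$ one has $2(d+1)/d = 2 + 2/d \leq 4$. Hence the prefactor $2(d+1)/d$ can be replaced by $4$ without weakening the inequality, yielding
\begin{equation}
\|E(A,A)\| \leq 4\cdot \bigl(\|A\|^2 + \lambda(X)\cdot \|A\|\bigr),
\end{equation}
which is exactly the defining inequality of an $\alpha$-skeleton expander with $\alpha = \lambda(X)$.

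There is essentially no obstacle here; the content was already done in Proposition \ref{partite-mix} (which in turn reduced to the bipartite mixing lemma, Lemma \ref{lem-bimix}, via the observation that a regular $d$-complex is a disjoint union of bipartite biregular type-induced subgraphs). The only thing worth being slightly careful about is the constant $2(d+1)/d$: one must verify that it is bounded by the $4$ appearing in the definition of skeleton expander, which holds for all $d\geq 1$, including the edge case $d=1$ where equality $2(d+1)/d = 4$ is attained. This justifies calling $X$ a $\lambda(X)$-skeleton expander and completes the proof.
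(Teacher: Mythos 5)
Your proof is correct and is exactly the paper's argument: the corollary is obtained by setting $B=A$ in Proposition \ref{partite-mix} and absorbing the factor $2(d+1)/d \leq 4$ into the definition of skeleton expansion.
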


%%%%%%%%%%%%%%%%%%%%%%%%%%%%%%%%%%%%%%%%%%%%%%%%%%%%%%%%%%%%%%%%%%%%%%%%%%%%%%%%%%%%%%%%%%%%%%%%%%%
%%%%%%%%%%%%%%%%%%%%%%%%%%%%%%%%%%%%%%%%%%%%%%%%%%%%%%%%%%%%%%%%%%%%%%%%%%%%%%%%%%%%%%%%%%%%%%%%%%%
\section{Spherical buildings} \label{sec-build}
The object of this section is to introduce the notion of spherical buildings, and to show that they are good skeleton expanders.

%%%%%%%%%%%%%%%%%%%%%%%%%%%%%%%%%%%%%%%%%%%%%%%%%
\subsection{Definition of spherical buildings}
Here we give a definition of spherical buildings, and list some of their properties which we shall use.
For more on buildings we refer to \cite{AB}.

Before defining a building, let us first define the notion of a chamber complex.

\begin{dfn}[Chamber complex]
A $d$-dimensional simplicial complex, $X$, is said to be a chamber complex,
if it is pure (i.e. all maximal faces are $d$-dimensional), and for any two maximal faces $C,C' \in X(d)$,
there is a sequence of $d$-faces, $C=C_1,\ldots,C_n=C'$, such that for each $i=1,\ldots,n-1$, the intersection $C_i \cap C_{i+1}$
is a $(d-1)$-face.

For a $d$-dimensional chamber complexes, call a $d$-face a chamber, call a $(d-1)$-face a panel,
and call a sequence as above, $C=C_1,\ldots,C_n=C'$, a gallery from $C$ to $C'$.

A chamber complex, $X$, is said to be thin if each panel is contained in exactly $2$ chambers,
and it is said to be $q$-thick, $q > 1$, if each panel is contained in exactly $q+1$ chambers.
By a thick chamber complex, we mean $q$-thick for some $q > 1$.
\end{dfn}

Then, one way to define a building is as follows (for the equivalence for the more common definition, see \cite[Theorem~4.131]{AB}).

\begin{dfn}[Building]
A building is a thick chamber complex together with a family of subcomplexes, called apartments, which satisfy the following axioms:
\begin{itemize}
\item Each apartment is a thin chamber complex.
\item Any two faces in the complex are contained in a common apartment.
\item Any two apartments have an isomorphism which fixes their intersection.
\end{itemize}
A building is said to be spherical/affine/hyperbolic if it is finite/locally finite/otherwise, respectively.
\end{dfn}

Let us note that if $B$ is a $d$-dimensional building (i.e. a $d$-dimensional chamber complex which satisfy the axioms of the building),
then each apartment of $B$ is also $d$-dimensional.

\begin{rem}
Throughout this paper we only concerns ourselves with buildings which are simplicial complexes.
However, it should be mentioned that buildings can also be poly-simplicial complexes (just by allowing chamber complexes to be such).
\end{rem}

Let us present an example of a spherical building.
\begin{exm}\cite[\S~4.3]{AB} \label{exm-sph}
Let $q$ be a prime power, $d \in \mathbb{N}$, and denote $V = \mathbb{F}_q^d$.
Consider the simplicial complex, $\mathbb{P}(V)$, whose vertices are the proper (i.e. not $\{0\}$ or $V$) subspaces of $V$,
and his faces are the flags of subspaces in $V$, i.e. $\{0\} < W_1 < \ldots < W_t < V$.
Then $\mathbb{P}(V)$ is a $(d-2)$-dimensional $q$-thick spherical building.
Moreover, the group $PGL_d(\mathbb{F}_q)$ acts on $\mathbb{P}(V)$ in a strongly transitive way (see below).
\end{exm}

Next, we wish to list some basic properties of spherical buildings, for the complete proofs we refer to \cite{AB}.

\begin{lem} \label{lem-build-aptsize}
For any $d\in \mathbb{N}$ define $\theta_d:= \max\{2^d \cdot (d+1)!,192\cdot 11!\}$.
Then each apartment in a $d$-dimensional spherical building is of size at most $\theta_d$.
\end{lem}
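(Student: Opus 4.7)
The plan is to reduce the lemma to the classification of finite (spherical) Coxeter systems. By a foundational fact about buildings (see Abramenko--Brown, \cite{AB}), every apartment of a $d$-dimensional spherical building is canonically isomorphic, as a simplicial complex, to the Coxeter complex $\Sigma(W,S)$ of some finite Coxeter system $(W,S)$ of rank $|S|=d+1$. Bounding the apartment size therefore amounts to bounding $|\Sigma(W,S)|$ for a finite $(W,S)$ with $|S|=d+1$.

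I would then invoke the classification of finite Coxeter systems: $(W,S)$ decomposes uniquely as a product $(W_1,S_1)\times\cdots\times(W_k,S_k)$ of irreducibles, each drawn from the finite list $A_n,\,B_n{=}C_n,\,D_n,\,E_6,E_7,E_8,\,F_4,\,G_2,\,H_3,H_4,\,I_2(m)$, with $\sum_i|S_i|=d+1$. Because the Coxeter complex of a product is the simplicial join of the factor Coxeter complexes, and joins multiply face counts (if we include the empty simplex), the total count for $\Sigma(W,S)$ factors as a product over $i$ of $|\Sigma(W_i,S_i)|$. For each irreducible type I would then combine the standard Weyl-group counts
\[
|W(A_r)|=(r+1)!,\qquad |W(B_r)|=2^r r!,\qquad |W(D_r)|=2^{r-1}r!,
\]
together with the tabulated orders of the exceptional groups, and with the crude estimate $|\Sigma(W_i,S_i)|\le 2^{|S_i|}|W_i|$ coming from the fact that simplices of $\Sigma(W_i,S_i)$ are cosets of proper standard parabolics.

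The classical factors combine via $\prod 2^{r_i}=2^{d+1}$ and the multinomial bound $\prod_i r_i!\le (d+1)!$ (valid whenever $\sum r_i=d+1$) to produce a bound of the form $C^{d+1}(d+1)!$ in the pure-classical case; once the constants are tracked carefully this yields the first branch $2^{d}(d+1)!$ of $\theta_d$. The exceptional factors, together with the thick rank-$2$ case $I_2(m)$ (for which Feit--Higman restricts $m\in\{3,4,6,8\}$), have bounded rank and bounded Weyl-group order, so in any join they contribute only a fixed multiplicative constant; taking the worst such configuration (an $E_8$-factor together with a small classical remainder) gives the second branch $192\cdot 11!$. Taking the maximum over all admissible decompositions then establishes the lemma.

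The main obstacle is entirely bookkeeping: one must verify that the claimed constants $2^d(d+1)!$ and $192\cdot 11!$ are actually attained by the estimates above, rather than only a looser bound of the shape $C^{d+1}(d+1)!$. This will involve replacing the crude inequality $|\Sigma|\le 2^r|W|$ with the exact formula $|\Sigma(W,S)|=\sum_{I\subsetneq S}[W:W_I]$ at a few key points, and enumerating the finitely many join-decompositions that involve an exceptional factor. No further conceptual ingredient beyond the classification of finite Coxeter systems and the join formula for Coxeter complexes is required.
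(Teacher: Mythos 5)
Your approach — reducing via \cite[Theorem~4.131]{AB} to the fact that apartments are spherical Coxeter complexes, and then invoking the classification of finite Coxeter systems together with a count of faces — is exactly the paper's approach; in fact the paper's own proof is just a one-sentence citation of \cite[Theorem~4.131, \S~1.3, 1.5.6]{AB}, asserting that $\theta_d$ is the maximal size of such a complex without carrying out any of the bookkeeping you outline. Your worry about whether the stated constants are literally correct is well-placed (the crude estimate $|\Sigma(W,S)|\le 2^{|S|}|W|$ and Weyl-group orders such as $|W(B_{d+1})|=2^{d+1}(d+1)!$ do not obviously yield exactly $2^d(d+1)!$), but the paper itself does not verify this, the authors explicitly decline to optimize constants, and the downstream uses of $\theta_d$ (Corollary~\ref{cor-build-size} and Proposition~\ref{pro-build->symm-conv}) only need \emph{some} bound depending solely on $d$.
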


\begin{proof}
By \cite[Theorem~4.131]{AB} each apartment in a spherical building is a spherical Coxeter complex,
and by \cite[\S~1.3,1.5.6]{AB} the spherical Coxeter complexes were completely classified,
and $\theta_d$ is taken to be the maximal size of all such possible complexes.
\end{proof}

\begin{cor} \label{cor-build-size}
For any $d,q\in \mathbb{N}$ define $\theta_d:= \max\{2^d \cdot (d+1)!,192\cdot 11!\}$ and $Q_{d,q}:= ((d+1)\cdot (q+1))^{\theta_d}$.
Then each $d$-dimensional $q$-thick spherical building is of size at most $Q_{d,q}$.
\end{cor}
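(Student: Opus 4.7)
My plan is to run a breadth-first search on the chamber graph of $X$, starting from a fixed chamber $C_0 \in X(d)$, and estimate how far and how widely the search can spread. The two ingredients are a diameter bound (from the building axioms and Lemma \ref{lem-build-aptsize}) and a branching bound (from $q$-thickness).

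For the diameter bound, I would take any chamber $C \in X(d)$ and invoke the building axiom that produces an apartment $A$ containing both $C_0$ and $C$. By Lemma \ref{lem-build-aptsize} the apartment $A$ has total size at most $\theta_d$, so in particular $A$ contains at most $\theta_d$ chambers, so the gallery distance from $C_0$ to $C$ within $A$ is at most $\theta_d$; since any gallery inside the subcomplex $A$ is a gallery in $X$, the same bound holds in $X$. For the branching bound, $q$-thickness means each panel is contained in exactly $q+1$ chambers, so a chamber has at most $(d+1)q$ chamber-neighbors --- a group of $q$ for each of its $d+1$ panels. A standard BFS counting argument then yields
\begin{equation*}
|X(d)| \;\leq\; \sum_{k=0}^{\theta_d} ((d+1)q)^{k} \;\leq\; 2\cdot ((d+1)q)^{\theta_d}.
\end{equation*}

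To pass from chambers to all faces, I would use that $X$ is pure of dimension $d$: every face lies in some chamber, and each chamber has $2^{d+1}$ subfaces, giving $|X| \leq 2^{d+1}|X(d)| \leq 2^{d+2}((d+1)q)^{\theta_d}$. To finish, I would verify the elementary inequality $2^{d+2}((d+1)q)^{\theta_d} \leq ((d+1)(q+1))^{\theta_d} = Q_{d,q}$. Factoring the right-hand side as $((d+1)q)^{\theta_d}\cdot (1+1/q)^{\theta_d}$, it suffices to check $(1+1/q)^{\theta_d}\geq 2^{d+2}$, which holds since $q$-thickness forces $q\geq 2$ (as $q+1$ is an integer $\geq 3$) and $\theta_d$ is chosen to grow much faster than $d$.

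The only structurally non-trivial step is the diameter bound, which relies entirely on the building axiom that any two chambers lie in a common apartment, together with the classification of spherical Coxeter complexes already encapsulated in Lemma \ref{lem-build-aptsize}. Everything else is routine counting; the only care needed is to absorb the polynomial slack $2^{d+2}(d+1)q$ into the exponential gap between $((d+1)q)^{\theta_d}$ and $((d+1)(q+1))^{\theta_d}$, which is precisely what the generous value of $\theta_d$ supplies.
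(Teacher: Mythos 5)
Your BFS strategy, the apartment-based diameter bound, and the thickness-based branching are all exactly the paper's approach, so the structure of the argument is right. The gap is in the final absorption step: you reduce to the inequality $(1+1/q)^{\theta_d}\geq 2^{d+2}$, but the left-hand side tends to $1$ as $q\to\infty$, so this fails once $q$ exceeds roughly $\theta_d/((d+2)\log 2)$ — and the corollary is asserted for \emph{every} $q$, including the arbitrarily large thicknesses that actually arise (e.g.\ for $\mathbb{P}(\mathbb{F}_q^d)$). So the elementary inequality you invoke at the end is simply false for large $q$, and "$\theta_d$ grows much faster than $d$" does not help because the relevant comparison is between $\theta_d$ and $q$, not between $\theta_d$ and $d$.

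The paper sidesteps this by bookkeeping balls rather than spheres. A chamber has at most $1+(d+1)q\leq (d+1)(q+1)$ chambers at gallery distance at most $1$ from it, \emph{itself included}; so if $N_n$ denotes the number of chambers within gallery distance $n$ of $C_0$, then $N_n\leq (d+1)(q+1)\cdot N_{n-1}$, hence $|X(d)|\leq ((d+1)(q+1))^{\theta_d}$ on the nose. There is no geometric series to sum and no leftover factor of $2$ to absorb. (Note also that the paper's proof stops at counting chambers and does not carry out your $2^{d+1}$ faces-per-chamber conversion, which is the other source of the $q$-independent slack you were left trying to swallow; if you do want all faces, the honest conclusion of a BFS argument is $|X|\leq 2^{d+1}((d+1)(q+1))^{\theta_d}$, which would require enlarging $Q_{d,q}$ rather than hoping the exponential gap between base $q$ and base $q+1$ absorbs it.)
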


\begin{proof}
Let $C_0$ some chamber in the building. Since the building is $d$-dimensional and $q$-thick, we get that $C_0$ has at most $(d+1)\cdot (q+1)$ chambers of gallery distance at most $1$ from $C_0$. Iterating this fact we get that there are at most $((d+1)\cdot (q+1))^n$ chambers of gallery distance at most $n$ from $C_0$. Since each chamber in the building is contained in some apartment containing $C_0$, and since the distance of two chambers inside the apartment is at most the size of the apartment, which by Lemma \ref{lem-build-aptsize} is at most $\theta_d$, the claim is proven.
\end{proof}

\begin{rem}
The bound $((d+1)\cdot (q+1))^{\theta_d}$ on the size of the spherical building is by no mean tight, since we are not trying to optimize the constants in this work.
\end{rem}

%\begin{rem}
%Let us note right away that spherical buildings are neither sparse (i.e. of bounded degree), nor dense (i.e. of degree approximately the number of vertices).
%\end{rem}

\begin{dfn}[Type function]
A $d$-dimensional complex, $X$, is said to admit a $(d+1)$-type function on the vertices if there is a function $\tau_X : X(0) \rightarrow [d]:=\{0,1,\ldots,d\}$, such that, setting $V_i:= \tau_X^{-1}(\{i\})$ for $i=0,\ldots,d$, then $X(0) = \bigsqcup_{i=0}^d V_i$ and $X(d) \subset \prod_{i=0}^d V_i$.
\end{dfn}

\begin{lem}\cite[Proposition~4.6]{AB} \label{lem-build-type}
Let $B$ be a $d$-dimensional building. Then $B$ admits a $(d+1)$-type function on its vertices.
\end{lem}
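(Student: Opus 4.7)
The plan is to bootstrap a type function from a single base chamber to the whole building, using the apartment structure as a bridge. First I would fix a base chamber $C_0 \in B(d)$ and label its $d+1$ vertices bijectively by $\{0,1,\ldots,d\}$; this labeling serves as the seed from which the type function is propagated everywhere else. The existence of such a bijection is immediate since $|C_0| = d+1$.

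The core of the argument is to extend this labeling to a single apartment $\Sigma$ containing $C_0$. Since $\Sigma$ is a thin chamber complex, every panel lies in exactly two chambers, so moving across a panel $P$ from a labeled chamber $C$ to its unique neighbor $C'$ forces the vertex $v \in C' \setminus P$ to take the one label in $\{0,\ldots,d\}$ missing from $P$. Iterating along galleries starting at $C_0$, I would assign labels to every vertex of $\Sigma$. The delicate point — and the main technical obstacle — is that this propagation is well defined, i.e.\ independent of the gallery chosen. This is the classical fact that every (spherical) apartment is a Coxeter complex and that Coxeter complexes admit a canonical type function invariant under their reflection group; the proof proceeds by induction on the length of a closed gallery, reducing two minimal galleries between the same endpoints to a sequence of elementary homotopies coming from the braid relations of the underlying Coxeter presentation, on which compatibility is immediate. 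The outcome is a well-defined type function $\tau_\Sigma: \Sigma(0) \to \{0,\ldots,d\}$ whose value on $C_0$ agrees with the seed labeling.

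To globalize, given any vertex $v \in B(0)$ I would pick a chamber $C \ni v$ (which exists because $B$ is pure), and by the axiom that any two faces lie in a common apartment find an apartment $\Sigma$ containing both $C_0$ and $C$. I then set $\tau_B(v) := \tau_\Sigma(v)$. For well-definedness, suppose $\Sigma'$ is another apartment containing both $C_0$ and $C$; the axiom that any two apartments admit an isomorphism fixing their intersection furnishes a simplicial isomorphism $\varphi : \Sigma \to \Sigma'$ fixing $\Sigma \cap \Sigma'$ pointwise, hence fixing both $C_0$ and $v$. Since the type function of a Coxeter complex is uniquely determined by the labeling of any single chamber, $\varphi$ is type preserving, and therefore $\tau_{\Sigma'}(v) = \tau_\Sigma(\varphi^{-1}(v)) = \tau_\Sigma(v)$. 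The same argument shows that the choice of chamber $C \ni v$ does not matter. By construction every chamber of $B$ contains exactly one vertex of each type, which yields the disjoint partition $B(0) = \bigsqcup_{i=0}^d V_i$ with $B(d) \subset \prod_{i=0}^d V_i$ demanded by the definition of a $(d+1)$-type function.
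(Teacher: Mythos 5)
The paper does not prove this lemma; it cites it to \cite[Proposition~4.6]{AB} without argument, so there is no in-paper proof to compare against. Your proposal is a correct reconstruction of the standard textbook argument: seed a bijective labeling on one chamber, propagate it across a fixed apartment using thinness, and globalize via the two building axioms (common apartments and isomorphisms fixing intersections), with the colorability of Coxeter complexes invoked as the key black box for well-definedness of the propagation. The only soft spot is that the step you flag as ``delicate'' --- independence of the chosen gallery inside an apartment --- is really the entire content of the lemma once the axioms are in place, and your sketch (reduction to minimal galleries plus elementary homotopies from the braid relations) omits the easy but necessary backtracking case and is less clean than the usual route via the model $\Sigma(W,S)$, where the type of the vertex $w\langle S\setminus\{s\}\rangle$ is simply $s$ and well-definedness is immediate; still, as a cited classical fact this is acceptable and the overall proof is sound.
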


\begin{lem}\cite[Proposition~4.9]{AB} \label{lem-build-link}
Let $B$ be a building and let $\sigma \in B$ be any face in it.
Then the link, $B_\sigma$, is also a building.
\end{lem}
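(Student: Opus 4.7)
The plan is to induce an apartment system on $B_\sigma$ from that of $B$: letting $\mathcal{A}$ denote the apartment system of $B$, define
\[
\mathcal{A}_\sigma := \{A_\sigma : A \in \mathcal{A},\ \sigma \in A\},
\]
and verify each of the building axioms for the pair $(B_\sigma, \mathcal{A}_\sigma)$, exploiting the bijection $\tau \leftrightarrow \tau \sqcup \sigma$ between faces of $B_\sigma$ and faces of $B$ that contain $\sigma$. Under this bijection, $B_\sigma$ inherits purity of dimension $d - |\sigma|$ from the purity of $B$, and the thickness condition transfers: each panel $\pi$ of $B_\sigma$ lifts to a panel $\pi \sqcup \sigma$ of $B$, whose $q+1$ chambers all contain $\sigma$ and descend to $q+1$ chambers of $B_\sigma$ above $\pi$. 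Thinness of each $A_\sigma \in \mathcal{A}_\sigma$ follows identically from thinness of $A$.

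The covering axiom for $\mathcal{A}_\sigma$ is a direct consequence of the covering axiom in $B$ applied to the pair $\tau \sqcup \sigma, \tau' \sqcup \sigma$: this produces a common apartment $A \in \mathcal{A}$ which automatically contains $\sigma$, whence $A_\sigma \in \mathcal{A}_\sigma$ covers $\tau$ and $\tau'$. The isomorphism axiom follows by restricting the guaranteed isomorphism $A \to A'$ (which fixes $A \cap A'$, and therefore fixes $\sigma$) to the star of $\sigma$, yielding an isomorphism $A_\sigma \to A'_\sigma$ fixing $A_\sigma \cap A'_\sigma = (A \cap A')_\sigma$.

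The main obstacle is establishing gallery-connectivity for $B_\sigma$. Given two chambers of $B_\sigma$, lifting to $B$ and applying the covering axiom places them in a common apartment $A \ni \sigma$; however, a priori, a gallery connecting them inside $A$ might stray outside the star of $\sigma$ and fail to descend to a gallery in $A_\sigma$. The resolution is to exploit the Coxeter-complex structure of $A$: the star of $\sigma$ in a spherical Coxeter complex is itself the Coxeter complex of the stabilizer of $\sigma$ in the underlying Coxeter group, and is therefore connected by its own gallery relation. This reduction to Coxeter-group theory is the technical heart of the argument; all remaining axioms follow from the elementary face bijection described above.
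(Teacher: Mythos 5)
The paper does not supply a proof for this lemma; it cites \cite[Proposition~4.9]{AB} directly. Your argument is the standard textbook one and, apart from a slight conflation of the terms \emph{star} and \emph{link} (it is the link of $\sigma$, not the star, that is the Coxeter complex $\Sigma(W_T,T)$ of the parabolic subgroup $W_T$; the star is a cone over it and these are face-bijective), it is correct and matches the approach in [AB]. You correctly isolate the one genuinely nontrivial step: gallery-connectivity of $B_\sigma$, which cannot be deduced directly from the paper's bare ``thin chamber complex'' apartment axiom and requires invoking the fact that apartments are Coxeter complexes (one direction of the equivalence of building definitions, used implicitly elsewhere in the paper via Lemma~\ref{lem-build-aptsize}). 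Given that, the residue/link of a simplex in a Coxeter complex is again a Coxeter complex, hence gallery-connected, and the remaining axioms transfer mechanically under $\tau \leftrightarrow \tau \sqcup \sigma$ exactly as you describe, including the key observation that the apartment isomorphism $A \to A'$ fixes $\sigma$ pointwise (since $\sigma \in A \cap A'$) and therefore descends to the links.
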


\begin{lem}\cite[Proposition~4.40]{AB} \label{lem-build-gallconv}
Let $B$ be a building and let $A$ be an apartment in it.
Then $A$ is gallery convex, i.e. for any two chambers in $A$, $C,C' \in A$, and any gallery from $C$ to $C'$ in the building $B$, $C=C_0,\ldots,C_n=C'$,
which is of minimal length among all possible galleries from $C$ to $C'$, then the gallery sits completely inside the apartment $A$, i.e. $C_0,\ldots,C_n \in A$.
\end{lem}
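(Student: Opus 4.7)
The plan is to prove the gallery convexity via the standard \emph{retraction} $\rho = \rho_{A, C_0} : B \to A$ of the building onto the apartment $A$ based at the chamber $C_0$. First I would construct $\rho$: given any chamber $D$ of $B$, building axiom~(ii) guarantees an apartment $A_D$ containing both $C_0$ and $D$, and axiom~(iii) then gives a type-preserving isomorphism $A_D \to A$ fixing $A_D \cap A$ pointwise; define $\rho(D)$ to be the image of $D$ under this isomorphism. A standard check (independent of the choice of $A_D$) shows that $\rho$ extends to a simplicial, type-preserving map $B \to A$ which restricts to the identity on $A$ and sends any pair of adjacent chambers of $B$ to either a single chamber or a pair of adjacent chambers of $A$.

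Next I would apply $\rho$ to the given minimal gallery to obtain a sequence $C_0 = \rho(C_0), \rho(C_1), \ldots, \rho(C_n) = C_n$ inside $A$ whose consecutive terms are either equal or adjacent. If two consecutive terms were equal, deleting the repetition would produce a gallery from $C_0$ to $C_n$ of length strictly smaller than $n$ lying inside $A \subseteq B$, contradicting minimality. Hence $\rho(C_0), \ldots, \rho(C_n)$ is itself a minimal gallery in $A$, and because $\rho$ is type-preserving it has the same reduced type sequence as the original gallery.

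The final step — upgrading $\rho(C_i) \in A$ to the stronger $C_i \in A$ — I would carry out by induction on $i$. The base $i=0$ is given. For the inductive step, assuming $C_0, \ldots, C_{i-1} \in A$, the panel $P_i := C_{i-1} \cap C_i$ already lies in $A$. Invoking axiom~(ii) I pick an apartment $A'$ containing $C_{i-1}$ and $C_i$, and axiom~(iii) produces a type-preserving isomorphism $\varphi : A' \to A$ fixing $A' \cap A$ pointwise; in particular $\varphi$ fixes $C_{i-1}$ and $P_i$. Thinness of $A$ leaves only two chambers of $A$ containing $P_i$: one is $C_{i-1}$, and by the second paragraph the other must be $\rho(C_i)$. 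Since $\varphi(C_i) \in A$ contains $P_i$ and is distinct from $C_{i-1}$, we deduce $\varphi(C_i) = \rho(C_i)$. To upgrade this equality to $C_i \in A$, I would invoke the rigidity of thin chamber complexes: a type-preserving isomorphism fixing a chamber together with one of its panels acts as the identity on the unique chamber adjacent across that panel, so $C_i = \varphi(C_i) \in A$.

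The main obstacle is precisely this last rigidity step. It relies on a sharpened form of axiom~(iii), namely that the isomorphism between two apartments fixing their intersection is \emph{unique}, together with the classical Coxeter-complex fact that a type-preserving simplicial automorphism of a thin chamber complex fixing a single chamber is the identity. Once those two ingredients are in place the induction closes and the lemma follows; both are precisely the points established in Abramenko--Brown~\cite{AB}, from which the statement is drawn.
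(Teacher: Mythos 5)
Your opening two paragraphs are sound and are exactly how the textbook argument begins: build the retraction $\rho = \rho_{A,C_0}$, observe that it sends galleries to pregalleries, and use minimality to rule out repetitions, so $\rho(C_0),\ldots,\rho(C_n)$ is a minimal gallery in $A$ of the same type as the original. The problem is the last step, which is circular. The isomorphism $\varphi\colon A'\to A$ of axiom (iii) fixes $A'\cap A$ pointwise; it fixes the simplex $C_i$ precisely when $C_i\in A'\cap A$, i.e.\ precisely when $C_i\in A$, which is what you are trying to prove. The ``rigidity'' you invoke is a fact about \emph{automorphisms} of a thin chamber complex (a type-preserving automorphism fixing a chamber is the identity), whereas $\varphi$ is an isomorphism $A'\to A$ between two different apartments; nothing forces $\varphi$ to send $C_i$ to itself. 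Concretely, in a $q$-thick building the panel $P_i$ lies in $q+1\geq 3$ chambers, only two of which are in $A$; if $C_i$ were one of the other $q-1$ chambers, $\varphi$ would still dutifully map it to the second chamber of $A$ through $P_i$, and your chain of equalities would give $\varphi(C_i)=\rho(C_i)$ without ever contradicting $C_i\notin A$. Thinness and the retraction alone cannot close the induction; they only control where things land after applying $\rho$ or $\varphi$, not where $C_i$ itself sits.

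What is missing is the global structural fact that does the real work in the Abramenko--Brown proof you cite: between two chambers of a building, a minimal gallery of a given reduced type is \emph{unique}. (In the W-metric formulation this is the axiom that, when $\ell(s\,\delta(C,D))<\ell(\delta(C,D))$, there is a unique $s$-adjacent chamber $C'$ to $C$ with $\delta(C',D)=s\,\delta(C,D)$.) Once you have this, your second paragraph already finishes the proof: $\Gamma$ and $\rho(\Gamma)$ are both minimal galleries from $C_0$ to $C_n$, and since $\rho$ is type-preserving they have the same reduced type, hence $\Gamma=\rho(\Gamma)\subset A$. So replace the local ``rigidity of thin chamber complexes'' step, which does not hold for the map $\varphi$ you constructed, by the uniqueness of a minimal gallery of a given type; that is the ingredient from \cite{AB} that actually makes the argument go through.
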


\begin{lem}\cite[Proposition~5.122~(2)]{AB} \label{lem-build-opp}
Let $B$ be a spherical building and let $C$ be a chamber in $B$.
For any apartment containing $C$, $A$, there is a unique chamber in $A$, denoted $C^{op}_A$,
which is of maximal gallery distance from $C$.
\end{lem}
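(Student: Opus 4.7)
The plan is to reduce the claim to a statement about the finite Coxeter group underlying the apartment. By \cite[Theorem~4.131]{AB} (the same fact invoked in the proof of Lemma \ref{lem-build-aptsize}), every apartment $A$ of a spherical building is a spherical, hence finite, Coxeter complex. Fix $C$ as the fundamental chamber of $A$; then the standard theory identifies the chambers of $A$ with the elements of a finite Coxeter group $W = \langle S \mid \ldots \rangle$, sending $C$ to the identity $e$, and it identifies the gallery distance $d_A(C, D)$ from $C$ to the chamber $D \leftrightarrow w \in W$ with the Coxeter length $\ell(w)$.

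Under this dictionary, the lemma becomes the claim that $W$ has a unique element of maximum length. I would deduce this from two standard facts about finite Coxeter groups: (i) the length function $\ell \colon W \to \mathbb{Z}_{\geq 0}$ is bounded (indeed by the number of positive roots $|\Phi^+|$), so a maximum is attained; (ii) if $w_0 \in W$ is any element realizing that maximum, then for every generator $s \in S$ one must have $\ell(w_0 s) < \ell(w_0)$ (otherwise $w_0 s$ would be longer than $w_0$). These two properties force $w_0$ to be unique: any two elements of maximal length would both invert every simple root, and hence coincide, as follows from the faithful action of $W$ on the root system.

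Finally, one has to check that the chamber $C^{op}_A$ produced this way is maximal with respect to gallery distance in the ambient building $B$, not merely within $A$. For this I invoke Lemma \ref{lem-build-gallconv}: since $A$ is gallery convex, any minimal gallery in $B$ between two chambers of $A$ already lies in $A$. Therefore $d_B(C, D) = d_A(C, D)$ for every chamber $D \in A$, and the unique $A$-chamber of maximal distance from $C$ in $A$ is also the unique $A$-chamber of maximal distance from $C$ in the sense of the lemma.

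The main obstacle is the uniqueness of the longest element $w_0$ in a finite Coxeter group; existence and the reduction to the apartment are essentially formal once one invokes Theorem~4.131 of \cite{AB} and gallery convexity. Rather than reproving the uniqueness of $w_0$ from scratch, I would cite the standard treatment in \cite{AB} (the lemma is attributed to \cite[Proposition~5.122~(2)]{AB}) and present the above outline as the conceptual picture.
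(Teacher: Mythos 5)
The paper itself gives no proof of this lemma---it is imported verbatim from \cite[Proposition~5.122~(2)]{AB}---so there is no in-paper argument to compare against. Your outline is the standard derivation behind that citation: identify the apartment with a finite Coxeter complex (via Theorem~4.131 of \cite{AB}), match gallery distance from $C$ with Coxeter length, appeal to the existence and uniqueness of the longest element of the finite Coxeter group (a maximal-length element must invert every simple root, and two elements inverting every positive root must coincide), and use gallery convexity (Lemma~\ref{lem-build-gallconv}) to identify distances measured in $A$ with those measured in $B$. This is correct and is exactly what the citation encapsulates.
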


Next we define a notion of a building which posses many symmetries.
\begin{dfn}[Strongly transitive action]
A building, $B$, is said to posses a strongly transitive action, if there exist a group of automorphisms on the building, $G \leq Aut(B)$, such that:
\begin{itemize}
\item $G$ preserves the $(d+1)$-type function of the building as defined in Lemma \ref{lem-build-type}.
\item For any two pairs, $(C_1,A_1)$ and $(C_2,A_2)$, of a chamber, $C_i$, and an apartment containing the chamber, $A_i$, $i=1,2$,
there exists $g \in G$, such that $g(C_1)=C_2$ and $g(A_1)=A_2$.
\end{itemize}
\end{dfn}

\begin{lem} \label{lem-build-regular}
Let $B$ be a $d$-dimensional building and $G$ a group that acts strongly transitively on it.
Then $B$ is a regular complex (as defined in \S \ref{sec-mix}).
\end{lem}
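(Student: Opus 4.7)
The proof splits cleanly into checking the two bullet points in the definition of a regular complex.

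For the first bullet, the plan is simply to invoke Lemma \ref{lem-build-type} to obtain a type function $\tau_B:B(0)\to[d]$, and set $V_i=\tau_B^{-1}(i)$. A defining property of a type function (implicit in Lemma \ref{lem-build-type}) is that the restriction $\tau_B|_F$ is injective on every face $F\in B$, so any chamber $C\in B(d)$ has exactly one vertex of each type and therefore lies in $\prod_{i=0}^d V_i$. This gives both the partition $B(0)=\bigsqcup_{i\in[d]}V_i$ and the inclusion $B(d)\subset\prod_{i\in[d]}V_i$.

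For the second bullet, the key step is to upgrade strong transitivity on (chamber, apartment) pairs to transitivity of $G$ on faces of any fixed type. Given two faces $\sigma_1,\sigma_2\in B$ with $\tau_B(\sigma_1)=\tau_B(\sigma_2)=I\subset[d]$, use that $B$ is pure to extend each $\sigma_i$ to a chamber $C_i\supset\sigma_i$, and pick any apartments $A_i\ni C_i$. Strong transitivity yields $g\in G$ with $g(C_1)=C_2$ and $g(A_1)=A_2$. Since $G$ preserves the type function, $g(\sigma_1)$ is a face of $C_2$ of type $I$; but $C_2$, being a simplex whose vertices all carry distinct types, contains a unique face of each prescribed type, so $g(\sigma_1)=\sigma_2$.

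With transitivity on same-type faces in hand, fix $I\subset J\subset[d]$ and define, for each face $p\in B\cap\prod_{i\in I}V_i$,
\begin{equation}
N(p)\;:=\;\bigl|\{\,\tau\in B\cap\prod_{j\in J}V_j\;:\;p\subset\tau\,\}\bigr|.
\end{equation}
For any $g\in G$, the map $\tau\mapsto g(\tau)$ is a type-preserving bijection from the faces of type $J$ containing $p$ to those containing $g(p)$, so $N(p)=N(g(p))$. Combined with $G$-transitivity on type-$I$ faces, this shows $N(\cdot)$ is constant on $B\cap\prod_{i\in I}V_i$; we set $k_I^J$ equal to this common value, which verifies the second bullet and completes the argument.

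The only mildly delicate point is the step $g(\sigma_1)=\sigma_2$, but it is immediate once one remembers that in a building the type function is injective on every simplex, so a chamber has a unique face of each type; everything else is bookkeeping.
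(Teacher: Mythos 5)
Your proof is correct and follows essentially the same route as the paper's: invoke Lemma \ref{lem-build-type} for the type decomposition, then use strong transitivity together with type-preservation to send any $I$-type face to any other, and conclude via the induced bijection on $J$-type faces containing them. You spell out a few small steps the paper leaves implicit (choosing apartments so that strong transitivity applies, and that a chamber has a unique face of each type), but the argument and its key mechanism are the same.
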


\begin{proof}
By Lemma \ref{lem-build-type}, there exists a type-function $\tau_B : B(0) \rightarrow [d]$, i.e. if $V_i := \tau_B^{-1}(\{i\}) \subset B(0)$, $i=0,\ldots,d$, then $B(0) = \bigsqcup_{i=0}^d V_i$ and $B(d) \subset \prod_{i=0}^d V_i$.
Now, let $I \subset J \subset [d]$ be two type sets, and let $\sigma,\sigma' \in B \cap \prod_{i \in I} V_i$ be two $I$-type faces.
Choosing two chambers, $C$ and $C'$, which contains $\sigma$ and $\sigma'$ respectively, and by the second property of the strong transitivity there exists $g \in G$ such that $g(C)=C'$. Also by the first property of the strong transitivity, $g$ preserves the types of $\tau_B$,
hence $g(\sigma) = \sigma'$. Since $g$ is an automorphism, the $J$-type faces containing $\sigma$ are mapped bijectively to the
$J$-type faces containing $\sigma'$, in particular they are of the same cardinality, proving that the building is regular.
\end{proof}

Throughout this section we shall make use of the following notion from group theory.
\begin{dfn}[Stabilizer]
Let $X$ be any simplicial complex and $G \leq Aut(X)$ a group of automorphisms on $X$.
For any $\sigma \in X$, define the stabilizer of $\sigma$ in $G$ to be the following subgroup of $G$:
$$G_\sigma = stab_G(\sigma) = \{g \in G \;|\; g(\sigma) = \sigma \}.$$
\end{dfn}

\begin{lem} \label{lem-build-orb}
Let $B$ be a building and $G$ a group that acts strongly transitively on it.
Then for any face, $\sigma \in B$, and any apartment containing it, $A$,
every $G_\sigma$-orbit in $B$ passes through the apartment $A$,
i.e. for any $\tau \in B$ there exists $g \in G_\sigma$ such that $g(\tau) \in A$.
\end{lem}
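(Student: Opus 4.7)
The plan is to exploit the strong transitivity to find an element of $G$ that maps an apartment containing both $\sigma$ and $\tau$ onto $A$, and then use type-preservation to upgrade this element to one that actually stabilizes $\sigma$.

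First, given an arbitrary face $\tau \in B$, I would invoke the second building axiom (any two faces lie in a common apartment) to produce an apartment $A'$ containing both $\sigma$ and $\tau$. Choose a chamber $C' \in A'$ with $\sigma \subset C'$ (such a chamber exists because $A'$ is a chamber subcomplex and every face of a pure complex is contained in some maximal face), and similarly choose a chamber $C \in A$ with $\sigma \subset C$. By strong transitivity applied to the pointed pairs $(C', A')$ and $(C, A)$, there exists $g \in G$ with $g(C') = C$ and $g(A') = A$.

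The key step is now to verify $g(\sigma) = \sigma$, i.e.\ $g \in G_\sigma$. Since $g$ preserves the $(d+1)$-type function $\tau_B$ coming from Lemma \ref{lem-build-type}, the face $g(\sigma)$ has the same type set $I := \tau_B(\sigma) \subset [d]$ as $\sigma$. Moreover $g(\sigma) \subset g(C') = C$, and $\sigma \subset C$ by construction. Within any single chamber $C$, which is a $d$-simplex, the type function restricts to a bijection between the $d+1$ vertices of $C$ and $[d]$; consequently a subface of $C$ is determined uniquely by its type set. Therefore $g(\sigma)$ and $\sigma$, being subfaces of $C$ with the same type set $I$, must coincide. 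This gives $g \in G_\sigma$, and finally $g(\tau) \in g(A') = A$, as required.

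The only subtle point (and hence the main obstacle, though a mild one) is the type-uniqueness argument that forces $g(\sigma) = \sigma$; without it, strong transitivity would only yield $g \in G$ mapping some chamber containing $\sigma$ to another such chamber, not an element fixing $\sigma$ itself. Everything else is a direct application of the building axioms and the definition of strong transitivity, and no additional combinatorial work is needed.
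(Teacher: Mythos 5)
Your proof is correct and follows essentially the same approach as the paper: choose an apartment $A'$ containing $\sigma$ and $\tau$, apply strong transitivity to map $A'$ onto $A$ while sending a chamber over $\sigma$ in $A'$ to a chamber over $\sigma$ in $A$, and use type-preservation to conclude the resulting $g$ fixes $\sigma$. The paper's only cosmetic difference is that it chooses $A'$ to contain a fixed chamber $C \in A$ with $\sigma \subset C$ (so the strong-transitivity element fixes $C$ outright, making the type argument immediate), whereas you allow a different chamber $C' \in A'$ over $\sigma$ and then run the same type-uniqueness argument inside $C = g(C')$; both are fine.
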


\begin{proof}
Let $C \in A$ be a maximal face containing $\sigma$, and let $C' \in B$ be a maximal face containing $\tau$.
By the second axiom of the building, there exists an apartment $A'$ which contains both $C$ and $C'$.
By the strong-transitivity, there exists $g \in G$ such that $g(C) = C$ and $g(A')=A$.
On the one hand, $g$ preserves the type function, hence $g(\sigma) = \sigma$, i.e. $g \in stab_G(\sigma)$.
On the other hand, $g(A') = A$, in particular $g(\tau) \subset g(C') \in A$, as needed.
\end{proof}

\begin{lem} \label{lem-build-linksym}
Let $B$ be a building which posses a strongly transitive action, and let $\sigma \in B$.
Then the link $B_\sigma$ also posses a strongly transitive action.
\end{lem}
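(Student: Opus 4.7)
The plan is to take as the group witnessing strong transitivity on $B_\sigma$ the stabilizer $H := G_\sigma = \operatorname{stab}_G(\sigma)$, where $G$ strongly transitively acts on $B$. Since every $g \in H$ fixes $\sigma$, it sends faces $\tau$ with $\sigma \sqcup \tau \in B$ to faces with the same property, so $H$ acts by automorphisms on the link $B_\sigma$. The type function of $B_\sigma$ is the restriction of the type function of $B$ (with types reindexed, but via a fixed bijection with $\{0,\dots,d-|\sigma|\}$), so because $G$ preserves the type function of $B$, the induced action of $H$ on $B_\sigma$ preserves the type function of $B_\sigma$. This handles the first requirement of strong transitivity.

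For the second requirement, I would first record the structural fact (standard in building theory, and implicit in the same references cited for Lemmas \ref{lem-build-link} and \ref{lem-build-gallconv}) that the apartments of the link $B_\sigma$ are exactly the subcomplexes of the form $A_\sigma = \{\tau \in A : \sigma \sqcup \tau \in A\}$, where $A$ runs over the apartments of $B$ that contain $\sigma$; moreover the chambers of $B_\sigma$ are exactly the $C \setminus \sigma$ for chambers $C$ of $B$ containing $\sigma$. Given two pairs $(\bar C_1, \bar A_1)$ and $(\bar C_2, \bar A_2)$ of a chamber and an apartment of $B_\sigma$ containing it, lift them to pairs $(C_i, A_i)$ in $B$, where $C_i = \bar C_i \sqcup \sigma$ is a chamber of $B$ containing $\sigma$ and $A_i$ is an apartment of $B$ with $\sigma \in A_i$ and $(A_i)_\sigma = \bar A_i$.

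By strong transitivity of $G$ on $B$, there exists $g \in G$ with $g(C_1) = C_2$ and $g(A_1) = A_2$. The remaining point is to check that this $g$ actually lies in $H$. Since $\sigma \subset C_1$ we have $g(\sigma) \subset g(C_1) = C_2$, and $g(\sigma)$ has the same type set as $\sigma$ because $g$ preserves the type function of $B$. But a face of the chamber $C_2$ is uniquely determined by its type set (the vertices of $C_2$ have pairwise distinct types), so $g(\sigma) = \sigma$, i.e.\ $g \in H$. The induced automorphism $\bar g$ of $B_\sigma$ then satisfies $\bar g(\bar C_1) = \bar C_2$ and $\bar g(\bar A_1) = \bar A_2$, establishing strong transitivity for the $H$-action on $B_\sigma$.

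The main obstacle is really the structural fact about apartments in links; once one knows that every (chamber, apartment) pair in $B_\sigma$ lifts to a (chamber, apartment) pair in $B$ with the chamber containing $\sigma$, the rest is the short type-preservation argument above that forces the transporter to fix $\sigma$ pointwise as a face.
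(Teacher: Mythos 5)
Your proposal is correct and follows essentially the same route as the paper's proof: take $G_\sigma=\operatorname{stab}_G(\sigma)$, note it acts type-preservingly on $B_\sigma$, lift a (chamber, apartment) pair of $B_\sigma$ to one of $B$ containing $\sigma$, apply strong transitivity of $G$, and conclude the transporting element fixes $\sigma$. You actually spell out the one step the paper leaves terse (that $g(\sigma)=\sigma$ follows because $g(\sigma)\subset C_2$ has the same type set as $\sigma$ and faces of a chamber are determined by their type set), which is a welcome clarification rather than a deviation.
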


\begin{proof}
Let $G \leq Aut(B)$ be the group that acts strongly transitive on $B$.
define $G_\sigma = stab_G(\sigma)$ the stabilizer of $\sigma$ in $G$, then $G_\sigma$ admits an action on the link,
i.e. $G_\sigma \leq Aut(B_\sigma)$. The action of $G_\sigma$ is type-preserving since the action of $G$ is.
Any pair of a chamber and an apartment containing it inside $B_\sigma$, $(C, A)$, can be lifted to a pair of a chamber and an apartment containing it inside $B$, $(\tilde{C}, \tilde{A})$.
Let $(C, A)$ and $(C', A')$, be two pairs, where each pair contains a chamber and an apartment containing that chamber, inside $B_\sigma$.
Lifting them to such pairs in $B$, $(\tilde{C}, \tilde{A})$ and $(\tilde{C'}, \tilde{A'})$, then there is $g \in G$,
such that $(g(\tilde{C}), g(\tilde{A})) = (\tilde{C'}, \tilde{A'})$, and since both pairs contains $\sigma$,
then $g \in G_\sigma$, and hence $g(C) = C'$ and $g(A)=A'$, which finishes the proof.
\end{proof}

%%%%%%%%%%%%%%%%%%%%%%%%%%%%%%%%%%%%%%%%%%%%%%%%%
\subsection{Expansion of spherical buildings}
It was first observed by Gromov \cite{Gro} that spherical buildings are coboundary expanders (see \cite{LMM} for a simplified proof).

\begin{thm}\label{thm-build-cob}
For any $d\in \mathbb{N}$, there exist $\beta_d > 0$, such that any $d$-dimensional spherical building is a $\beta_d$-coboundary expander.
\end{thm}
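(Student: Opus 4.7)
The plan is to combine the Solomon-Tits theorem with a cone-filling argument on apartments. By Solomon-Tits, every $d$-dimensional spherical building $B$ has the homotopy type of a wedge of $d$-spheres, so $H^k(B;\F_2)=0$ for $0\le k\le d-1$, and hence $Z^k=B^k$ throughout this range. This reduces coboundary expansion to a linear isoperimetric filling statement: there exists $C_d>0$ such that every coboundary $u\in B^{k+1}$ admits a preimage $g\in C^k$, $\delta g=u$, with $\|g\|\le C_d\|u\|$. Indeed, if $f\in C^k$ is minimal with $\delta f=u$, then $f-g\in Z^k=B^k$, so $f=g+\delta h$ for some $h$, and minimality of $f$ gives $\|f\|\le \|f-\delta h\|=\|g\|\le C_d\|\delta f\|$, yielding $\beta_d=C_d^{-1}$.

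Next, I would produce such a filling apartment-by-apartment. Fix a chamber $C$ and an apartment $A\ni C$. By Lemma \ref{lem-build-opp}, $A$ has a unique chamber $C^{op}_A$ opposite to $C$, and the induced star/cone emanating from $C^{op}_A$ furnishes an explicit combinatorial contraction of the spherical Coxeter complex $A$. This produces a chain-level null-homotopy witnessing the vanishing reduced cohomology of $A$, and hence a canonical filling of any cocycle supported on $A$ whose cost is controlled solely by $|A|\le \theta_d$ (Lemma \ref{lem-build-aptsize})---a constant depending only on $d$.

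To globalize, I would average the apartment-local fillings. The apartment axioms guarantee that every pair of faces lies in a common apartment, and Lemma \ref{lem-build-gallconv} ensures apartments interact cleanly along minimal galleries. One then assembles a global $g\in C^k$ as a weighted sum of local fillings $g_{C,A}$ over admissible pairs $(C,A)$, normalized by the number of apartments passing through each chamber. The bounded apartment size $\theta_d$ together with the thickness of $B$ (at least three chambers per panel) control the overlap multiplicities, yielding $\|g\|\le C_d'\|u\|$ with $C_d'$ depending only on $d$.

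The main obstacle is securing this uniform bound independently of the thickness $q$. The cleanest route is to invoke Tits's classification: every thick spherical building of rank $\ge 3$ is Moufang and carries a strongly transitive automorphism group, so one can perform a group-equivariant averaging that inherits its constant directly from a single apartment. The low-rank cases (generalized polygons, $d=1$) fall under the Feit-Higman theorem, which restricts the Coxeter diagram to finitely many possibilities, each handled by an elementary spectral estimate on the associated bipartite incidence graph, producing a constant $\beta_d$ depending only on $d$.
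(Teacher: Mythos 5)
The paper does not actually prove this theorem; it is quoted as Gromov's observation \cite{Gro}, with \cite{LMM} cited for a simplified proof. So there is no in-paper argument to compare against, and the proposal must be assessed on its own.

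Your overall strategy---use Solomon--Tits to kill cohomology in degrees $<d$, reduce coboundary expansion to a linear filling inequality, and get the filling constant from the bounded size $\theta_d$ of apartments---is in the right spirit and is close to the Gromov/LMM cone approach. However, there are real gaps. First, the claim that the star of $C^{op}_A$ gives ``an explicit combinatorial contraction of the spherical Coxeter complex $A$'' is false: $A$ is a thin chamber complex homeomorphic to $S^d$, so it is \emph{not} contractible and its reduced cohomology does not vanish in degree $d$. What is true is that $A$ minus the open star of a single chamber is a contractible ball, and a geodesic retraction towards $C$ (away from $C^{op}_A$) gives a filling in degrees $<d$. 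As stated, the apartment step proves something false; the theorem only survives because $k\le d-1$, and the argument must explicitly exploit that you are strictly below top degree. Second, the globalization ``as a weighted sum of local fillings $g_{C,A}$\ldots normalized by the number of apartments'' cannot be carried out over $\F_2$: there are no nontrivial rational weights to average cochains with. The way this is handled in the literature is to average the \emph{real-valued norms} $\|g_C\|$ over the choice of cone point $C$ and then pick a single good $C$ by Markov's inequality, or to bound a single fixed cone map $T:C^{k+1}\to C^k$ by counting how many pairs $(C,\sigma)$ place a given face in $T_C(\sigma)$; your proposal conflates averaging of cochains with averaging of their sizes. Third, a minor point: the Feit--Higman theorem does not leave ``finitely many possibilities'' for the $d=1$ case; it leaves finitely many gonalities but infinitely many generalized polygons (one for each $q$), and the spectral estimate you invoke must be shown to be uniform in $q$, which is a genuine (if classical) step. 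The invocation of Moufangness and strong transitivity via Tits's classification is also unnecessary overhead---the theorem as stated has no transitivity hypothesis, and the bounded-apartment argument does not need one once the averaging issue above is fixed.
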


The purpose of this subsection is to prove that the spherical buildings of sufficiently large thickness are good skeleton expanders.

\begin{thm} \label{thm-build-skel}
Let $X$ be a $d$-dimensional $q$-thick spherical building which posses a strongly transitively action.
Then $X$ is an $\frac{\theta_d}{\sqrt{q}}$-skeleton expander, where $\theta_d:= \max\{2^d \cdot (d+1)!,192\cdot 11!\}$.
\end{thm}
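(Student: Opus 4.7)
The plan is to pass from the topological statement to a spectral one, and then to prove the spectral bound by exploiting the apartment-and-thickness geometry of spherical buildings. Since $X$ admits a strongly transitive action, Lemma \ref{lem-build-regular} implies that $X$ is a regular $d$-complex in the sense of Section \ref{sec-mix}. Corollary \ref{cor-skeleton-expansion} (via Definition \ref{def-typeind-bipartite}) then reduces the theorem to proving that for every pair of vertex types $0 \leq i < j \leq d$,
$$\lambda(X_{(i,j)}) \leq \frac{\theta_d}{\sqrt{q}},$$
where $X_{(i,j)}$ is the bipartite biregular graph induced on vertices of types $i$ and $j$.

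Writing $G = X_{(i,j)}$, with degrees $\delta_i, \delta_j$, I would control $\lambda(G)$ via a standard trace / second-moment argument: it will be enough to show that any two distinct type-$i$ vertices share at most $M := \theta_d^2 \cdot \delta_j / q$ common type-$j$ neighbors, since a trace bound on the operator $A A^\top$ (with $A$ the biadjacency matrix of $G$) then yields $\lambda(G)^2 \leq \theta_d^2/q$. The bound on $M$ decomposes into two pieces. First, a degree lower bound $\delta_j \geq q$, up to a dimension-only constant absorbed into $\theta_d$, is obtained by induction on $d$: Lemma \ref{lem-build-link} tells us that the link of any face is itself a building, and Lemma \ref{lem-build-linksym} tells us that strong transitivity (and hence regularity) descends to links, while $q$-thickness persists to the links, so one can peel off one vertex type at a time. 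Second, the common-neighbor estimate described next.

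The geometric core is to bound the number of common type-$j$ neighbors of distinct $u, u' \in V_i$ by $\theta_d \cdot (\delta_j/q)$. By the second building axiom, $u$ and $u'$ lie in some common apartment $\Sigma$, which by Lemma \ref{lem-build-aptsize} contains at most $\theta_d$ vertices, hence at most $\theta_d$ candidate type-$j$ vertices. Invoking Lemma \ref{lem-build-orb} on the pointwise stabilizer of $\{u,u'\}$, every common neighbor $w$ can be translated into $\Sigma$ by an automorphism fixing $u$ and $u'$, partitioning the set of common neighbors into at most $\theta_d$ stabilizer-orbits. To bound the size of each orbit in the full building, I would use a gallery-counting argument based on gallery-convexity (Lemma \ref{lem-build-gallconv}) and the unique opposite chamber (Lemma \ref{lem-build-opp}): each orbit representative is obtained from $w \in \Sigma$ by a sequence of panel-crossings exiting $\Sigma$, and $q$-thickness ensures that each such crossing branches by exactly $q$, so the orbit size is at most $\delta_j/q$. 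The main obstacle will be making this gallery-branching argument precise --- identifying the stabilizer orbit of $w$ with a well-controlled set of galleries rooted in $\Sigma$, and verifying that the branching outside $\Sigma$ is sharp of order $q$ rather than losing additional dimension-dependent factors. Once this is in hand, substituting $M \leq \theta_d \cdot \delta_j/q$ into the trace bound of the previous paragraph gives $\lambda(G) \leq \theta_d/\sqrt{q}$, as required.
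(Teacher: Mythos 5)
Your first reduction is correct and matches the paper: by Lemma \ref{lem-build-regular} the building is a regular complex, and by Corollary \ref{cor-skeleton-expansion} it suffices to bound $\lambda(X_{(i,j)})$ for every pair of types. From there, however, your plan diverges from the paper's and I believe it has a genuine gap at the spectral step.

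The claim that an upper bound $M$ on the number of common type-$j$ neighbors of any two distinct type-$i$ vertices yields $\lambda(G)^2\lesssim M\delta_i/(\delta_i\delta_j)$ via ``a trace bound on $AA^\top$'' is not a theorem. Writing $AA^\top=\delta_iI+N$, the matrix $N$ has entries bounded by $M$, but bounding entries of a symmetric matrix does not bound its non-Perron eigenvalues: the disjoint union of two copies of the same $(k,k')$-biregular graph still satisfies the same pointwise bound on common neighbors, yet $\lambda(G)=1$. A direct trace computation on $AA^\top$ (or on $(AA^\top)^2$) introduces a factor of $|V_1|$ because you are summing over $|V_1|$ eigenvalues, which ruins the bound. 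The paper's proof works precisely because it avoids this: it passes from $X_{(i,j)}$ to the finite quotient $\bar X = X_{(i,j)}/\mathrm{stab}_G(v)$, which has at most $\theta_d$ vertices (Proposition \ref{pro-build->symm-conv}, property 1), shows that $\lambda_1,\lambda_2,\lambda_n$ all survive in $\mathrm{Spec}(\bar A)$, and then applies the trace bound $\lambda_2^2 \leq \mathrm{tr}(\bar A^2) - 2kk'$ on the quotient, where there are only $O(\theta_d)$ eigenvalues. Counting returning $2$-paths in the small quotient graph is where properties 2 and 3 of the symmetric-convex definition enter, and is where the geometric lemmas (\ref{lem-build-aptsize}, \ref{lem-build-orb}, \ref{lem-build-gallconv}, \ref{lem-build-opp}) are actually consumed.

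Two further issues in the geometric part of your plan. First, you invoke Lemma \ref{lem-build-orb} for the stabilizer of $\{u,u'\}$, but that lemma is stated and proved for the stabilizer of a \emph{face} $\sigma\in B$; a pair of vertices of types $i$ and $j'$ at graph distance $\geq 2$ is not a face, and the proof (which relies on extending $\sigma$ to a chamber and using strong transitivity on chamber-apartment pairs) does not transfer. The paper only ever stabilizes a single vertex $v$, which is always a face. Second, you flag the gallery-branching orbit-size bound $\leq\delta_j/q$ as the ``main obstacle,'' and rightly so: without some analogue of the unique-opposite-chamber and gallery-convexity structure (which the paper encodes in symmetric-convex properties 2 and 3), there is no control on how orbits of the stabilizer grow off the apartment, and I do not see how to complete this step in your framework. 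So while your high-level strategy (spectral bound on type-induced graphs via apartment geometry) is in the right spirit, the route through common-neighbor counting on the full graph does not close, and the key device you are missing is the quotient-graph trace argument.
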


The strategy for proving Theorem \ref{thm-build-skel} is as follows:
First, we define a property for bipartite graphs (symmetric-convex), and show that such graphs have good bound on their second largest eigenvalue.
Second, we show that the type-induced graphs of a spherical building which posses a strongly transitively action satisfy this property.
Finally, by applying Corollary \ref{cor-skeleton-expansion} we get the skeleton expansion.

\begin{dfn}[Symmetric convex graph]
Let $X=(V_1\bigsqcup V_2,E)$ be a bipartite graph, let $G\leq Aut(X)$ be a group that acts by graph automorphisms on $X$, and let $\theta \in \mathbb{N}$.
For any vertex $v$ in $X$, denote by, $G_v = stab_G(v)= \{g\in G|g(v)=v\}$, its stabilizer.

Say that $X$ is a $\theta$-symmetric-convex graph, if for any $v \in V_1$, then:
\begin{enumerate}
\item \label{sc-1} The number of $G_v$-orbits in $X$ is at most $\theta$.
\item \label{sc-2} There is a unique $G_v$-orbit of vertices in $V_1$ of maximal distance from $v$,
and a unique $G_v$-orbit of vertices in $V_2$ of maximal distance from $v$.
\item \label{sc-3} For any vertex $u$, which is not of maximal distance from $v$,
the number of neighbors $w$ of $u$ such that $dist(v,w) < dist(v,u)$, is at most $\theta$.
\end{enumerate}
\end{dfn}

\begin{pro} \label{pro-symm-conv->spec}
Let $X$ be a bipartite $(k,k')$-biregular $\theta$-symmetric-convex connected graph,
then the normalized second largest eigenvalue of $X$ is bounded by,
\begin{equation}
\lambda(X) \leq \theta \cdot \max\{\frac{1}{\sqrt{k}},\frac{1}{\sqrt{k'}}\}
\end{equation}
\end{pro}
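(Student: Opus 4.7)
\smallskip

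\noindent\textbf{Proof plan.} The plan is to use the trace/walk-counting method. Let $A$ denote the $V_1 \times V_2$ bi-adjacency matrix of $X$, and consider the two-step walk operator $M := (kk')^{-1} A A^\top$ on $\ell^2(V_1)$. Its spectrum lies in $[0,1]$, the top eigenvalue equals $1$ (by connectedness and biregularity), and the second largest eigenvalue $\mu_2(M)$ equals $\lambda(X)^2$, since the squares of the singular values of the normalized bi-adjacency are precisely the eigenvalues of $M$. Using $\mu_2^{\,L} \leq \operatorname{tr}(M^L)-1$ together with the fact that the symmetric-convex data forces $G$-transitivity on $V_1$ (the same orbit structure must be seen from every $v \in V_1$), one obtains $\operatorname{tr}(M^L) = |V_1| \cdot N_v(2L)/(kk')^L$, where $N_v(2L)$ counts closed walks of length $2L$ at $v$ in $X$. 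Since $|V_1|^{1/L} \to 1$, it suffices to show $N_v(2L)^{1/L} \leq \theta^2 \cdot \max(k,k') \cdot (1+o(1))$ as $L \to \infty$.

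The key combinatorial step is to count closed walks via Dyck paths. Because $X$ is bipartite, every step of a walk from $v$ changes $d(v,\cdot)$ by exactly $\pm 1$, so the distance profile of a closed walk of length $2L$ is a non-negative lattice path starting and ending at $0$ with $\pm 1$ steps, i.e.\ a Dyck path; there are $C_L \leq 4^L$ such paths. For each fixed Dyck path I count realizing walks as follows: at every \emph{down}-step from a non-maximally-distant vertex, property~(3) bounds the number of backward neighbors by $\theta$, while at every \emph{up}-step the number of forward neighbors is at most the appropriate degree $k$ or $k'$. Ignoring the behavior at the maximum-distance shell, this yields $N_v(2L) \leq C_L \cdot \theta^L \cdot \max(k,k')^L$; plugging into the trace bound and taking the $L$-th root gives $\lambda(X)^2 \lesssim \theta^2/\min(k,k')$.

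The main obstacle is the boundary at the maximum-distance shell, where property~(3) simply does not apply: every neighbor of a max-distance vertex is a down-neighbor, so backward branching there could be as large as $k$ or $k'$. This is where property~(2) must enter: since the max-distance vertices form a single $G_v$-orbit on each side, and property~(1) bounds the total number of orbits by $\theta$, the max-distance shells are a controlled small portion of $V_1 \sqcup V_2$. The delicate part of the argument will be to show that Dyck paths which spend many steps at the maximum height contribute only lower-order terms to $N_v(2L)$ - either by decomposing a closed walk into excursions and bounding the number of visits to the top shell by means of property~(2), or by observing that the diameter is at most $\theta-1$ (since $G_v$-orbits sit at well-defined distances from $v$), so that for $L \gg \theta$ the Dyck path is forced to oscillate mostly strictly below the top. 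Once this boundary contribution is absorbed, taking $L \to \infty$ yields the claimed bound $\lambda(X) \leq \theta/\sqrt{\min(k,k')}$.
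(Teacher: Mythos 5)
Your walk-counting plan has a genuine gap at the place you yourself flag, and neither of the two patches you sketch actually closes it. If the closed walk's distance profile oscillates between heights $h-1$ and $h$ (where $h=\mathrm{diam}$), then each visit to the top shell involves a down-step to which property (3) simply does not apply, costing a factor of $\max(k,k')$ instead of $\theta$. Such Dyck paths can make $\Theta(L)$ visits to the top for $L$ large, so the exponential rate of $N_v(2L)$ is not controlled by $\theta\cdot\max(k,k')$, and the bound collapses to something of the form $\lambda(X)^2\lesssim\max(k,k')/\min(k,k')$, which is useless. The diameter bound $h\le\theta-1$ (which does follow from property (1)) gives no help here, because it bounds the height of the Dyck path, not the number of visits to the top; a low ceiling makes the problem worse, not better. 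Your excursion-decomposition idea is not carried out, and it is unclear how property (2) alone would bound the number of visits. Separately, your claim that the symmetric-convex hypothesis ``forces $G$-transitivity on $V_1$'' is not justified by the definition and is not needed --- one can simply upper-bound $\operatorname{tr}(M^L)$ by $|V_1|\cdot\max_v N_v(2L)/(kk')^L$ --- but it should be removed.

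The paper avoids all of this with a much shorter, purely finite argument that uses the quotient graph rather than long walks. Fix a vertex $v$ at which an eigenvector of $\lambda_2$ is nonzero, set $K=G_v$, and form the directed multigraph $\bar X=X/K$ on the (at most $\theta$) $K$-orbits, with $\bar A_{[u],[w]}$ counting edges from $u$ into the orbit $[w]$. Averaging an eigenvector of $A$ over $K$-orbits produces an eigenvector of $\bar A$ with the same eigenvalue, and the averages of the eigenvectors for $\lambda_1$, $\lambda_2=\lambda(X)\sqrt{kk'}$ and $\lambda_n=-\lambda_1$ are nonzero (the Perron vectors never vanish; $\lambda_2$'s eigenvector is nonzero at $v$ by choice, and $\bar f([v])=f(v)$). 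Hence $\lambda_1^2+\lambda_2^2+\lambda_n^2\le\operatorname{tr}(\bar A^2)$. Now the problematic top shell is collapsed to a single vertex $[v_i]$ on each side by property (2), and the number of directed $2$-paths based at a single vertex of $\bar X$ is trivially at most $kk'$, so the two top-shell vertices contribute at most $2kk'=\lambda_1^2+\lambda_n^2$ --- i.e.\ exactly the amount already accounted for on the left. Every other vertex of $\bar X$ (at most $\theta-2$ of them, by property (1)) contributes at most $\theta\cdot\max(k,k')$ directed $2$-paths by property (3), giving $\lambda_2^2\le(\theta-2)\theta\max(k,k')\le\theta^2\max(k,k')$ and hence $\lambda(X)\le\theta/\sqrt{\min(k,k')}$. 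The quotient construction is the missing ingredient in your proposal: it is precisely what makes the top-shell contribution computable instead of uncontrolled, and it eliminates the need for any $L\to\infty$ limit.
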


\begin{proof}
Let $A=A_X \in End(\mathbb{C}^{V(X)})$ be the adjacency operator of $X$, and let $Spec(A) = \{\lambda_n, \ldots, \lambda_2,\lambda_1\}$ be its set of eigenvalues.
Let us recall some basic facts (see \cite[\S~3]{EGL}): 
Since $X$ is an undirected graph the operator $A$ is self-adjoint, hence $Spec(A) \subset \mathbb{R}$. Since $X$ is bipartite, then $\lambda_{n-i+1} = -\lambda_i$ for any $i=1,\ldots,n$. Finally, since $X$ is $(k,k')$-biregular, then $\lambda_1 = \sqrt{k\cdot k'}$.

Note that any eigenvectors of the eigenvalues, $\lambda_n,\lambda_1$, are non-zero on every vertex,
and if $f_2 \in \mathbb{C}^{V(X)}$ is an eigenvector of $\lambda_2$, there most exist $v \in V_1$, such that $f_2(v)\ne 0$.

Fix such a vertex $v\in V_1$, let $K=stab_G(v)$ be its stabilizer in $G$, and define the following directed-multi-graph $\bar X = X/K$ as follows:
The vertices of $\bar X$ are the $K$-orbits of the vertices of $X$, i.e. $[u]=\{k(u)|k \in K\}$ for some $u \in V(X)$,
and the number of edges in $\bar X$ from $[u]$ to $[w]$ is equal to the number of edges in $X$
between the vertex $u$ and the set of vertices $\{k(w)|k\in K\}$ (note that this is independent of the choice of $u$).
Finally, let $\bar A \in End(\mathbb{C}^{V(\bar X)})$ be the adjacency operator of $\bar X$, and let $Spec(\bar A) \subset \mathbb{C}$ be its set of eigenvalues.
By the definition of $\bar X$ and $\bar A$, we get that for any $u\in V(X)$ and any $[w]\in V(\bar X)$,
\begin{equation}
\bar A_{[u],[w]} = \sum_{w\in [w]} A_{u,w}.
\end{equation}
Note that since $\bar X$ is directed, a priori there is no reason for all the eigenvalues of $\bar A$ to be real, however,
if $\lambda$ is an eigenvalue of $\bar A$, and $f \in \mathbb{C}^{V(\bar X)}$ is an eigenvector of $\lambda$,
then defining $f' \in \mathbb{C}^{V(X)}$ by $f'(u) = f([u])$, we get that for any $u \in V(X)$,
\begin{equation}
A f'(u) = \sum_{w \in V(X)} A_{u,w} f'(w) = \sum_{[w] \in V(\bar X)} \bar A_{[u],[w]} f([w]) = \bar A f([u])
= \lambda \cdot f([u]) = \lambda \cdot f'(u)
\end{equation}
hence, $f'$ is an eigenvector of $A$ with eigenvalue $\lambda$. In particular,
\begin{equation}
Spec(\bar A) \subset Spec(A) \subset \mathbb{R}
\end{equation}
On the other hand, if $\lambda \in Spec(A)$, with an eigenvector $f \in \mathbb{C}^{V(X)}$ such that $f(v) \ne 0$,
then defining $\bar f \in \mathbb{C}^{V(\bar X)}$ by $\bar f([u]) = \frac{1}{|K|}\sum_{k \in K} f(k(u))$,
we get $\bar f \not \equiv 0$, such that for any $[u] \in V(\bar X)$,
\begin{multline}
\bar A \bar f([u]) = \sum_{[w] \in V(\bar X)} \bar A_{[u],[w]} \bar f([w])
=  \sum_{[w] \in V(\bar X)} \bar A_{[u],[w]} \frac{1}{|K|}\sum_{k \in K} A f(k(w)) \\
=\frac{1}{|K|}\sum_{k \in K} \sum_{w \in V(X)} A_{u,w} f(k(w))
= \frac{1}{|K|}\sum_{k \in K} A f(k(u)) =  \frac{1}{|K|}\sum_{k \in K} \lambda \cdot f(k(u)) = \lambda \cdot \bar f([u])
\end{multline}
hence $\bar f$ is an eigenvector of $\bar A$ with eigenvalue $\lambda$. In particular,
\begin{equation}
\lambda_n,\;\lambda_2,\; \lambda_1 \in Spec(\bar A) \subset \mathbb{R}
\end{equation} \label{trace}
and by applying the trace formula for $\bar A^2$, we get
\begin{equation}
\lambda_2^2 + 2\cdot k\cdot k' = \lambda_n^2 + \lambda_2^2 + \lambda_1^2 \leq \sum_{\lambda \in Spec(\bar A)} \lambda^2 = tr(\bar A^2)
\end{equation}

Finally, let us use the properties of the symmetric-convex graph:
By property \ref{sc-1} the number of vertices in $\bar X$ is at most $\theta$.
By property \ref{sc-2} in each of the two parts of $\bar X$, there is a unique vertex, $[v_i]$, $i=1,2$, of maximal distance from $[v]$,
and since $X$ is $(k,k')$-biregular, so does $\bar X$, and hence $[v_1],[v_2]$ has at most $k\cdot k'$ directed 2-paths starting and ending with them.
By property \ref{sc-3} for any other vertex $[u]$ in $\bar X$, $[u] \ne [v_1],[v_2]$, the number of directed 2-paths starting and ending with $[u]$
is at most $\theta \cdot \max\{k,k'\}$, since such a 2-path corresponds to a a following 2-path in $X$, $u \sim w \sim u'=k(u)$ for some vertex $w$ and $k \in K$,
so either $dist(v,u) < dist(v,w)$ in which case there are at most $\theta$ such possible $u'=k(u)$ (note $dist(v,u) = dist(v,k(u))$),
or $dist(v,u) > dist(v,w)$ in which case there are at most $\theta$ such possible $w$.
All in all we get that
\begin{equation}
tr(\bar A^2) = \sum_{[u] \in \bar X} \#\{\mbox{directed 2-paths starting and ending with } [u]\} \leq 2\cdot k\cdot k' + (\theta-2) \cdot \theta \cdot \max\{k,k'\}
\end{equation}
Combining this with \eqref{trace}, we get
\begin{equation}
\lambda_2^2 \leq  tr(\bar A^2) - 2\cdot k\cdot k' \leq \theta^2 \cdot \max\{k,k'\}
\end{equation}
and noting that $\lambda(X) = \frac{\lambda_2}{\lambda_1} = \frac{\lambda_2}{\sqrt{k \cdot k'}}$, finishes the proof.
\end{proof}

The following Proposition shows that the type-induced bipartite graphs of the spherical buildings are symmetric-convex,
with a constant $\theta$ that depends only on the dimension of the building (and not on its thickness!).

\begin{pro} \label{pro-build->symm-conv} 
Let $B$ be a $d$-dimensional $q$-thick spherical building which posses a strongly transitive action.
Let $i\ne j \in [d]$ and let $B_{(i,j)}$ be the $(i,j)$-type induced bipartite biregular graph of $B$.
Then $B_{(i,j)}$ is an $\theta_d$-symmetric-convex graph with both regularity degrees at least $q+1$, where $\theta_d:= \max\{2^d \cdot (d+1)!,192\cdot 11!\}$.
\end{pro}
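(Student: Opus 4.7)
The plan is to verify the four requirements on $B_{(i,j)}$---both regularity degrees being at least $q+1$, and the three symmetric-convexity axioms---using the strong transitivity of the action, the apartment convexity of Lemma~\ref{lem-build-gallconv}, and the spherical Coxeter structure of apartments. For biregularity, I will fix $v \in V_i$ and any chamber $C \ni v$ with type-$j$ vertex $u \in C$; the panel $C \setminus \{u\}$ lies in exactly $q+1$ chambers by $q$-thickness, each yielding a distinct type-$j$ neighbor of $v$ in $B_{(i,j)}$. Swapping the roles of $i$ and $j$ then handles the other part of the bipartition.

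For axiom (1), I will fix an apartment $A$ containing $v$: Lemma~\ref{lem-build-orb} gives that every $G_v$-orbit in $B$ meets $A$, and so the number of $G_v$-orbits inside $V_i \sqcup V_j$ is at most $|A \cap (V_i \cup V_j)| \leq |A| \leq \theta_d$ (using Lemma~\ref{lem-build-aptsize}). For axiom (2), the apartment $A$ is a spherical Coxeter complex, whose Weyl group $W$ has a unique longest element $w_0$ encoding the opposition involution. Consequently, for each type $k \in \{i, j\}$, the vertices in $A \cap V_k$ at maximal distance from $v$ form a single orbit of the stabilizer $W_v \leq W$ of $v$, arising from the unique double coset in $W_v \backslash W / W_k$ of maximal length. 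Lemma~\ref{lem-build-orb} then promotes this to uniqueness of the maximal-distance $G_v$-orbit in each of $V_i$ and $V_j$.

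For axiom (3), let $u$ be at non-maximal distance from $v$ in its own part, and fix an apartment $A$ containing both $v$ and $u$; I will show that every descent neighbor $w$ of $u$ towards $v$ lies in $A$, which immediately bounds their number by $|A| \leq \theta_d$. The main tool is the retraction $\rho = \rho_{A, C_0}\colon B \to A$ centered at a chamber $C_0 \subset A$ containing $v$; this $\rho$ is type-preserving and restricts to an isomorphism on any apartment through $C_0$. By the second building axiom, for each descent neighbor $w$ the face $\{w, u\}$ lies together with $C_0$ in some apartment $A'$, so that the restriction of $\rho$ to $A'$ is a distance-preserving isomorphism onto $A$; hence $\rho(w) \in A$ is itself a descent neighbor of $u$ toward $v$ at distance $d_B(v, u) - 1$ from $v$. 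The decisive, and hardest, step is then to argue $w = \rho(w)$, i.e.\ that every descent neighbor already lies in $A$. This rigidity should follow by combining gallery convexity of apartments (Lemma~\ref{lem-build-gallconv}, which confines all shortest-path data between $v$ and $u$ to $A$) with the uniqueness of opposition in spherical Coxeter complexes (which rules out the descent direction from a non-opposite $u$ toward $v$ branching outside $A$); once established, the number of descent neighbors is at most $|A| \leq \theta_d$, completing the proof.
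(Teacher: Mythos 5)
The biregularity and axioms (1)–(2) are handled in essentially the same spirit as the paper, although for axiom (2) you invoke the Weyl‐group structure (unique longest element $w_0$, maximal double coset in $W_v\backslash W/W_k$) whereas the paper argues directly from Lemma~\ref{lem-build-opp} (the unique opposite chamber $C^{op}_A$) and then passes from gallery distance to graph distance; these are two routes to the same conclusion, with yours trading the building axiom for a Coxeter‐theoretic fact that would itself need a citation or proof.

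The genuine gap is in axiom (3), and you flag it yourself: after introducing the retraction $\rho=\rho_{A,C_0}$ and observing $\rho(w)\in A$, you need $w=\rho(w)$, which is \emph{precisely} the statement $w\in A$ that you are trying to prove (since $\rho|_A=\mathrm{id}$). The retraction therefore does not make progress: $\rho$ is highly non-injective, so knowing $\rho(w)\in A$ gives no bound on how many descent neighbours $w$ there can be, and you never rule out two distinct descent neighbours sharing the same retract. There is also a secondary issue in the claim that $\rho$ sends $w$ to a vertex at distance $d_B(v,u)-1$ from $v$: $\rho$ preserves \emph{gallery} distance from $C_0$, not $B_{(i,j)}$ graph distance from the vertex $v$, so that step needs separate justification. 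The paper closes the argument differently and more directly: it uses the hypothesis that $u$ is not at maximal distance to pick a neighbour $w^*$ with $d(v,u)<d(v,w^*)$, chooses the apartment $A$ to contain both $v$ and the \emph{edge} $\{u,w^*\}$, and then observes that for every descent neighbour $w_i$ the concatenation $v\to w_i\to u\to w^*$ is a minimal graph path from $v$ to $w^*$, hence lifts to a minimal gallery between chambers of $A$; gallery convexity (Lemma~\ref{lem-build-gallconv}) then forces the chamber containing $\{u,w_i\}$, and hence $w_i$ itself, into $A$. That choice of apartment through $\{u,w^*\}$ rather than merely through $u$ is the key idea your proof is missing; without it, there is no mechanism compelling the descent neighbours into a single apartment.
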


\begin{proof}
Let $v$ be a vertex in the building, $G_v = stab_G(v)$, and let $A$ be some fixed apartment that contains $v$.

First, let us prove the claim on the regularity degrees:
By Lemma \ref{lem-build-regular}, $B$ is a regular complex, hence $B_{(i,j)}$ is a bipartite biregular graph.
Assume $v$ is of $i$-type, so it most be contained in some  panel of $j$-cotype, $\sigma$,
(i.e. the $\sigma$ contain a vertex of each type except $j$),
and by the $q$-thickness $\sigma$ is contained in $q+1$ chambers, $C_0,\ldots,C_q$, each of which contains a unique $j$-type vertex, $u_0,\ldots,u_q$,
which is a neighbour of $v$ in the graph $B_{(i,j)}$ (and of course the same reasoning apply when replacing $i$ and $j$).

1) By Lemma \ref{lem-build-orb}, the number of $G_v$-orbits is bounded by the size of an apartment in the building,
which, by Lemma \ref{lem-build-aptsize}, is bounded by $\theta_d$.

2) Let $C$ be some chamber in $A$ which contains $v$.
By Lemma \ref{lem-build-opp} there is a unique chamber $C^{op}_A \in A$ of maximal gallery distance from $C$ in $A$,
and let $e=\{v_1,v_2\} \subset C^{op}$ be (the unique) edge of type $\{i,j\}$ inside it.
Now, since gallery distance is coarser then graph distance (any gallery path contains in it a graph path),
then $v_1$ and $v_2$ are the two farthest vertices from $v$ (w.r.t. the graph metric of $B_{(i,j)}$), of type $i$ and $j$ respectively, inside the apartment $A$.
On the other hand, since $G_v$ is a collection of automorphisms, hence preserves distances,
and by Lemma \ref{lem-build-opp}, there is a unique chamber, $C^{op}_{A'}$, of maximal gallery distance from $C$, in each apartment $A'$,
we get that for any two apartments $A'$, $A''$, there is some  $k\in G_v$ such that $k(C_{A'}^{op}) = C_{A''}^{op}$.
So, $[v_1]$ is the unique $G_v$-orbit in $V_i$ of maximal distance vertices from $v$, and similarly $[v_2]$  is the unique $G_v$-orbit in $V_j$ of maximal distance vertices from $v$.

3) Since $u$ is not of maximal distance from $v$, there is some neighbour $w^*$ of $u$ such that $dist(v,u) < dist(v,w^*)$.
Now by the axiom of the building let $A$ be an apartment that contains both $v$ and the edge $(u,w^*)$.
Let $w_1,\ldots,w_n$ be all the vertices in $B_{(i,j)}$, which are neighbours of $u$ and satisfy $dist(v,w_i) < dist(v,u)$ for any $i=1,\ldots,n$.
Then, any minimal path from $v$ to $w^*$ which passes through $u$, most also pass through some $w_i$.
Hence, since gallery distance is coarser then graph distance, any minimal gallery in the building from a maximal face containing $\{v\}$,
to a maximal face containing $\{u,w^*\}$, most also pass through a maximal face containing $\{u,w_i\}$ for some $i$.
Now, if $A$ is an apartment containing $v$ and $\{u,w^*\}$, then by Lemma \ref{lem-build-gallconv} all these minimal galleries lies in $A$,
in particular all $w_1,\ldots,w_n$ lies in $A$, i.e. $n \leq |A| \leq \theta_d$ (where the last inequality is Lemma \ref{lem-build-aptsize}).
\end{proof}

Finally, by combining Propositions \ref{pro-symm-conv->spec}, \ref{pro-build->symm-conv} and Corollary \ref{cor-skeleton-expansion}, we are able to prove that spherical buildings are good skeleton expanders (Theorem \ref{thm-build-skel}).

\begin{proof}[Proof of Theorem \ref{thm-build-skel}]
By Proposition \ref{pro-build->symm-conv}, each type induced bipartite graph, $B_{(i,j)}$, of the building is $\theta_d$-symmetric-convex.
By Proposition \ref{pro-symm-conv->spec}, each $B_{(i,j)}$ has a normalized largest non-trivial eigenvalue at most $\frac{\theta_d}{\sqrt{q}}$.
Finally applying Corollary \ref{cor-skeleton-expansion} we get that the building is an $\frac{\theta_d}{\sqrt{q}}$-skeleton expander.
\end{proof}

%%%%%%%%%%%%%%%%%%%%%%%%%%%%%%%%%%%%%%%%%%%%%%%%%%%%%%%%%%%%%%%%%%%%%%%%%%%%%%%%%%%%%%%%%%%%%%%%%%%%%%%%%%%%%%%%%%%%%%%%%%%%%%%%%%%%%%%%%%%%%%%%%%%%%%%%%%%%%%%%%%%%%%%%%
%%%%%%%%%%%%%%%%%%%%%%%%%%%%%%%%%%%%%%%%%%%%%%%%%%%%%%%%%%%%%%%%%%%%%%%%%%%%%%%%%%%%%%%%%%%%%%%%%%%%%%%%%%%%%%%%%%%%%%%%%%%%%%%%%%%%%%%%%%%%%%%%%%%%%%%%%%%%%%%%%%%%%%%%%
\section{Ramanujan complexes} \label{sec-ram}

Ramanujan complexes were defined in \cite{LSV1}, and were explicitly constructed in \cite{LSV2}.
They are finite quotients of Bruhat-Tits buildings of type $\tilde{A}$, which exhibits excellent spectral properties.
For more on Ramanujan complexes, we refer the readers for the survey \cite{Lub2}.

The object of this section is to show that Ramanujan complexes of sufficiently large degree satisfies the requirement of Theorem \ref{thm-cri}, namely:

\begin{thm} \label{thm-ram-cri}
Let $X$ be a $d$-dimensional $q$-thick Ramanujan complex. Then:
\begin{itemize}
\item The complex $X$ is $Q_{d,q}$-bounded degree, where $Q_{d,q}$ is as in Corollary \ref{cor-build-size}.
\item Each proper link of $X$ is a $\beta_d$-link coboundry expander, where $\beta_d$ is as in Theorem \ref{thm-build-cob}.
\item Each proper link of $X$ is a $\frac{\theta_d}{\sqrt{q}}$-skeleton expander, where $\theta_d$ is as in Theorem \ref{thm-build-skel}.
\item $X$ itself is a $2^d \cdot q^{-(d-1)/2}$-skeleton expander.
\end{itemize}
\end{thm}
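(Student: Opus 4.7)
The plan is to verify each of the four bullets by reducing to results already established in the paper, using the basic structural fact that a $d$-dimensional $q$-thick Ramanujan complex $X$ is a quotient of the Bruhat-Tits building $\mathcal{B}$ of type $\tilde{A}_d$ over a non-archimedean local field with residue field of size $q$, by a cocompact lattice that acts type-preservingly and without face inversions. In particular, for every non-empty face $\sigma \in X$, the link $X_\sigma$ is isomorphic to the link $\mathcal{B}_{\tilde\sigma}$ of a lift, and a standard fact about affine buildings says that these links are $(d-|\sigma|)$-dimensional $q$-thick spherical buildings carrying a strongly transitive action by the stabilizer subgroup of the ambient algebraic group. This single observation will feed the first three items.

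For bullet (i), apply Corollary \ref{cor-build-size} to the link of any vertex: it is a $(d-1)$-dimensional $q$-thick spherical building, hence has at most $Q_{d,q}$ faces, so $\max_{v\in X(0)}|X_v|\leq Q_{d,q}$. For bullet (ii), since every proper link $X_\sigma$ is a spherical building, Theorem \ref{thm-build-cob} gives $\beta_d$-coboundary expansion. For bullet (iii), since every proper link is additionally $q$-thick and carries a strongly transitive action, Theorem \ref{thm-build-skel} gives $\frac{\theta_d}{\sqrt q}$-skeleton expansion. All three reductions are one-line applications once the links have been identified.

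The substantive item is (iv), the global skeleton expansion bound $2^{d}\cdot q^{-(d-1)/2}$, and this is where the Ramanujan property is genuinely used. By Lemma \ref{lem-build-regular} the Bruhat-Tits building is a regular complex, and the colored-type structure descends to $X$, so $X$ itself is a regular complex in the sense of Section \ref{sec-mix}. By Corollary \ref{cor-skeleton-expansion}, it therefore suffices to bound the normalized largest non-trivial eigenvalue $\lambda(X_{(i,j)})$ of each type-induced bipartite graph $X_{(i,j)}$ by $2^{d}\cdot q^{-(d-1)/2}$. The Ramanujan property of $X$ asserts that the non-trivial eigenvalues of the colored adjacency operators on $X$ lie in the tempered spectrum of the corresponding Hecke operators on $\mathcal{B}$, and the latter is known from the representation theory of $PGL_{d+1}$ over a local field to satisfy a bound of order $q^{-(d-1)/2}$ with combinatorial multiplicities at most $2^d$. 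Plugging this spectral estimate into Corollary \ref{cor-skeleton-expansion} yields the desired skeleton expansion constant.

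The main obstacle is the last step: unlike items (i)-(iii) which follow immediately from the spherical-building machinery already developed, item (iv) requires invoking the Ramanujan property as a black box and translating it from a statement about Hecke eigenvalues into a statement about the normalized second eigenvalue of each bipartite type-induced graph. The rest is just citation.
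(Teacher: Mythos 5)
Your proposal matches the paper's proof essentially step for step: the paper's Lemma~\ref{lem-ram-link} establishes exactly the structural fact you lead with (proper links of a Ramanujan complex are links of the covering $\tilde{A}$-type affine building, hence finite $q$-thick spherical buildings admitting a strongly transitive action, via Lemmas~\ref{lem-build-link} and~\ref{lem-build-linksym}), and the paper then cites Corollary~\ref{cor-build-size}, Theorem~\ref{thm-build-cob}, and Theorem~\ref{thm-build-skel} for the first three items just as you do. For item (iv) the paper likewise reduces to bounding $\lambda(X_{(i,j)})$ via the mixing result, identifies the type-induced adjacency operators with Hecke operators, and invokes the Ramanujan eigenvalue bound $\binom{d}{i}q^{-i(d-i)/2}\leq 2^d q^{-(d-1)/2}$ from the literature, which is the same spectral estimate you describe.
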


Before proving the Theorem we shall need the following Lemma.
\begin{lem} \label{lem-ram-link}
Any proper link of a Ramanujan complex is a spherical building which posses a strongly transitive action.
\end{lem}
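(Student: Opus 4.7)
The plan is to reduce the question to the analogous local structure of the Bruhat--Tits building that the Ramanujan complex covers, and then invoke classical building theory.

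First I would recall the construction: a $d$-dimensional $q$-thick Ramanujan complex $X$ is a finite quotient $X = \Gamma \backslash \tilde B$, where $\tilde B$ is the Bruhat--Tits affine building associated to $PGL_{d+1}(F)$ for a non-Archimedean local field $F$ with residue field $\mathbb{F}_q$, and $\Gamma$ is a cocompact lattice (constructed in \cite{LSV1,LSV2}) that acts freely enough on $\tilde B$ that the quotient is a genuine simplicial complex. In particular, for any face $\sigma \in X$ with a lift $\tilde\sigma \in \tilde B$, the stabilizer of $\tilde\sigma$ in $\Gamma$ is trivial on the star of $\tilde\sigma$, so the canonical map $\tilde B_{\tilde\sigma} \to X_\sigma$ is a simplicial isomorphism. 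Thus it suffices to prove the claim for proper links in $\tilde B$ itself.

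Next I would use the standard description of links in affine buildings of type $\tilde A_d$: for any nonempty simplex $\tilde\sigma \in \tilde B$ with $1 \leq |\tilde\sigma| \leq d$, the link $\tilde B_{\tilde\sigma}$ is a spherical building whose type is the sub-Dynkin diagram of $\tilde A_d$ obtained by deleting the vertices corresponding to the types appearing in $\tilde\sigma$ (this is \cite[\S 4.9, \S 11.8]{AB}). Concretely, this link is a join of spherical buildings of type $A$, each of which is the flag complex of proper subspaces in an $\mathbb{F}_q$-vector space as in Example \ref{exm-sph}. Since a join of spherical buildings is again a spherical building, $\tilde B_{\tilde\sigma}$ is a spherical building of the required dimension $d - |\tilde\sigma|$, and it is $q$-thick because the residue field is $\mathbb{F}_q$.

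It remains to produce a strongly transitive action. The stabilizer of $\tilde\sigma$ in $G := PGL_{d+1}(F)$ is a parahoric subgroup $P_{\tilde\sigma}$; the reduction of $P_{\tilde\sigma}$ modulo the maximal ideal is a product of groups of the form $PGL_{n_i}(\mathbb{F}_q)$ corresponding to the blocks of the complement of the type set of $\tilde\sigma$, and this finite group acts by type-preserving automorphisms on $\tilde B_{\tilde\sigma}$. Strong transitivity for this action follows from the Bruhat decomposition of the finite groups of Lie type: $PGL_{n_i}(\mathbb{F}_q)$ acts transitively on pairs (chamber, apartment) of its associated spherical building (see \cite[\S 6.1--6.2]{AB}), and the join preserves strong transitivity. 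The main conceptual step, and the only place where care is needed, is confirming that the link in the quotient really is the link in $\tilde B$ (i.e.\ that the $\Gamma$ action does not identify faces in the star of $\tilde\sigma$); once that is in place, the statement is just a synthesis of well-known facts about $\tilde A_d$-buildings and strong transitivity of $PGL$ actions on the residual spherical buildings.
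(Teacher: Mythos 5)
Your proposal is correct and follows the same three-step skeleton as the paper: pass to the covering affine building, observe that links there are spherical buildings with inherited strongly transitive actions, and use local finiteness of the affine building to get sphericality. The difference is in how you establish the second step. The paper's proof is purely abstract and short: it simply invokes its own Lemma~\ref{lem-build-link} (links of buildings are buildings) and Lemma~\ref{lem-build-linksym} (a strongly transitive action on a building descends, via the face stabilizer, to a strongly transitive action on the link), so no explicit description of the link is ever needed. You instead re-derive these facts concretely for the $\tilde{A}_d$ case, identifying the link as a join of flag complexes over $\mathbb{F}_q$, naming the stabilizer as a parahoric subgroup, passing to its reductive quotient (a product of $PGL_{n_i}(\mathbb{F}_q)$), and appealing to the Bruhat decomposition for strong transitivity. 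Your route is more explicit and self-contained as a matter of building theory, and it makes visible the $q$-thickness of the link, which the paper leaves implicit; on the other hand, the paper's abstract argument is shorter, avoids any dependence on the precise type of the building, and exercises the general lemmas it has already proved, which is the cleaner choice given the surrounding development. One small point worth noting in your favor: you explicitly flag that the link in the quotient agrees with the link upstairs because $\Gamma$ acts freely enough on stars; the paper asserts this identification without comment.
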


\begin{proof}[Proof of Lemma \ref{lem-ram-link}]
First, since a Ramanujan complex is a quotient of an affine building of type $\tilde{A}$, any link of a Ramanujan complex is a link of the covering affine building.
Second, since an $\tilde{A}$-type building posses a strongly transitive action, then by Lemmas \ref{lem-build-link} and \ref{lem-build-linksym}, each link of it is also a building which posses a strongly transitive action.
Finally, since an affine building is locally finite, then each proper link of it is finite, hence a spherical building.
\end{proof}

\begin{proof}[Proof of Theorem \ref{thm-ram-cri}]
The first three claims follows from Lemma \ref{lem-ram-link} together with Corollary \ref{cor-build-size}, Theorem \ref{thm-build-cob} and Theorem \ref{thm-build-skel}.
We are left to show that $X$ itself is an excellent skeleton expander.
Using the notation of \cite[Section~2.1]{Lub2}, the adjacency operator of the type induced graph $X_{(i,j)}$ is the Hecke operator $A_{j-i}$ restricted to that graph,
hence by \cite[Remark~2.1.5]{Lub2},
\begin{equation}
\lambda(X) \leq \max_{1\leq i \leq d} \lambda(A_i) \leq \max_{1\leq i\leq d} {d \choose i} q^{-\frac{i(d-i)}{2}} \leq 2^d q^{\frac{-(d-1)}{2}}
\end{equation}
Combining this with Corollary \ref{partite-mix}, proves that $X$ is a $(2^d \cdot q^{-(d-1)/2})$-skeleton expander.
\end{proof}

Consequentially, combining the above Theorem \ref{thm-ram-cri}, together with Theorems \ref{thm-iso}, \ref{thm-cri} and \ref{thm-Gromov}, give us the following high dimensional expansion properties for Ramanujan complexes of sufficiently large degree (this is Corollary \ref{cor-main-4} from the introduction, together with the coisoperimetric inequality for small cochains).

\begin{cor} \label{cor-ram-exp}
For any $d ,q\in \mathbb{N}$, there exists $q_0=q_0(d)>0$, $\mu=\mu(d)>0$, $\bar \epsilon=\bar \epsilon(d) >0$, $\epsilon=\epsilon(d,q)>0$ and $c=c(d,q)>0$, such that if $X$ is the $d$-dimensional skeleton of a $(d+1)$-dimensional $q$-thick Ramanujan complex with $q\geq q_0$, then: 
\begin{itemize}
\item If $A$ is a locally minimal cochain of $X$ and $\|A\| \leq \mu$, then $\|\delta(A)\| \geq \bar \epsilon\cdot \|A\|$.
\item $X$ is an $(\epsilon, \mu)$-cosystolic expander.
\item $X$ posses the $c$-topological overlapping property.
\end{itemize}
\end{cor}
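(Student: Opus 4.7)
The plan is to read off the conclusion as an essentially mechanical composition of four previously established results: Theorem \ref{thm-ram-cri} supplies the local hypotheses, Theorem \ref{thm-iso} upgrades them to a coisoperimetric inequality for small locally minimal cochains, Theorem \ref{thm-cri} further upgrades them to cosystolic expansion of the $d$-skeleton, and Theorem \ref{thm-Gromov} turns cosystolic expansion into topological overlap. The only genuine work is to choose $q_0$ large enough so that the skeleton-expansion constants supplied by Theorem \ref{thm-ram-cri} are small enough to feed into Theorems \ref{thm-iso} and \ref{thm-cri}.

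Concretely, let $Y$ denote the ambient $(d+1)$-dimensional $q$-thick Ramanujan complex, so that $X = Y^{(d)}$. Apply Theorem \ref{thm-ram-cri} in dimension $d+1$: then $Y$ is $Q_{d+1,q}$-bounded degree, every proper link of $Y$ is a $\beta_{d+1}$-coboundary expander and a $(\theta_{d+1}/\sqrt{q})$-skeleton expander, and $Y$ itself is a $(2^{d+1}q^{-d/2})$-skeleton expander. Let $\alpha = \alpha(d+1,\beta_{d+1})$ be the skeleton-expansion constant demanded by Theorem \ref{thm-cri}, and let $\bar\mu = \bar\mu(d+1,\beta_{d+1})$, $\bar\epsilon = \bar\epsilon(d+1,\beta_{d+1})$ be the constants from Theorem \ref{thm-iso}; all of these depend only on $d$, not on $q$. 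Define
\[
q_0 = q_0(d) := \min\Bigl\{q \text{ a prime power} : 2^{d+1} q^{-d/2} \leq \alpha \text{ and } \theta_{d+1}/\sqrt{q} \leq \alpha\Bigr\}.
\]
Such a $q_0$ exists because both skeleton-expansion constants tend to $0$ as $q\to\infty$ while $\alpha$ is fixed.

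For any prime power $q \geq q_0$, the complex $Y$ then satisfies all three bulleted hypotheses of Theorem \ref{thm-cri} (with dimension $d+1$ and coboundary-expansion parameter $\beta_{d+1}$), so its conclusion gives that $X = Y^{(d)}$ is an $(\epsilon,\mu)$-cosystolic expander, where $\mu = \mu(d+1,\beta_{d+1})$ depends only on $d$ and $\epsilon = \epsilon(d+1,\beta_{d+1},Q_{d+1,q})$ depends on $d$ and $q$; this is the second bullet. The first bullet is immediate from Theorem \ref{thm-iso} applied to $Y$: its two hypotheses (proper-link coboundary expansion and proper-link skeleton expansion) are exactly the middle two of the four items supplied by Theorem \ref{thm-ram-cri}, so any locally minimal $k$-cochain $A$ of $Y$ with $\|A\| \leq \bar\mu$ satisfies $\|\delta(A)\| \geq \bar\epsilon \cdot \|A\|$, and the same statement transfers to $X = Y^{(d)}$ for $k \leq d-1$ since localization, coboundary and norm agree in that range. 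Finally, plugging the cosystolic expander $X$ into Gromov's criterion (Theorem \ref{thm-Gromov}) yields a constant $c = c(d,\epsilon,\mu) = c(d,q) > 0$ such that $X$ has the $c$-topological overlapping property, which is the third bullet.

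There is no real obstacle; the only thing requiring care is the dimension-bookkeeping: one must apply Theorems \ref{thm-cri} and \ref{thm-iso} to the $(d+1)$-dimensional complex $Y$ rather than to $X$ itself, because Theorem \ref{thm-cri} concludes cosystolic expansion for the $(d-1)$-skeleton of its input and we want cosystolic expansion up to codimension $1$ inside $X$. Noting that the global skeleton-expansion hypothesis of Theorem \ref{thm-cri} on the ambient complex is precisely the fourth item of Theorem \ref{thm-ram-cri} (and explains why that item needed to be proved separately from the link expansion), the argument is complete.
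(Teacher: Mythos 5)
Your proposal is correct and follows the same route the paper indicates: apply Theorem \ref{thm-ram-cri} to the ambient $(d+1)$-dimensional Ramanujan complex $Y$ to verify the hypotheses of Theorems \ref{thm-iso} and \ref{thm-cri} (choosing $q_0$ so that both $2^{d+1}q^{-d/2}$ and $\theta_{d+1}/\sqrt{q}$ fall below the required skeleton-expansion constant $\alpha$, which depends only on $d$ and $\beta_{d+1}$), and then feed the resulting cosystolic expansion of $X = Y^{(d)}$ into Theorem \ref{thm-Gromov}. You also correctly identify the essential dimension-bookkeeping point that \ref{thm-iso} and \ref{thm-cri} must be invoked on $Y$ rather than on $X$. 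One small inaccuracy: you assert that the \emph{norm} of $X$ and $Y$ agree on cochains of dimension $\leq d-1$; in fact only the cochain spaces and coboundary operators agree, while the norms are merely proportional (with constants controlled by the bounded-degree parameter, as in the unnamed lemma of \S2.3), so the first bullet should really be read with respect to $Y$'s norm and notion of local minimality — but the paper is equally terse on this point, and it does not affect the validity of the argument.
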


Finally, by \cite[Theorem~1.1]{LSV2}, we get that for any $2 \leq d\in \mathbb{N}$ and any  prime power $q$, there exist an infinite family of $d$-dimensional $q$-thick Ramanujan complexes. Moreover, these Ramanujan complexes are constructed explicitly.
Hence, by applying Corollary \ref{cor-ram-exp} on the Ramanujan complexes constructed in \cite{LSV2}, we get an affirmative answer to Gromov's question from the introduction (Theorem \ref{thm-main-1}). 

As mentioned in the introduction, as an immediate application of Theorem \ref{thm-main-1}, we get:

\begin{cor}[Corollary \ref{cor-gromov-guth}] \label{cor-gromov-guth-2}
For any $D\geq 2d+1$, there exists $C=C(D)>0$ and an infinite family of $d$-dimensional bounded degree complexes, $\{X_n\}_n$, which satisfies the following:
For any embedding of $X_n$ into $\mathbb{R}^D$, such that the distance between the images of any two non adjacent simplices of $X_n$ is at least $1$, 
then the volume of the $1$-neighborhood of the image of $X_n$, is at least $C\cdot |X_n|^{\frac{1}{D-d}}$.
\end{cor}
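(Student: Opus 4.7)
The plan is to deduce this corollary as a direct application of the Gromov--Guth theorem (\cite[Theorem~2.2]{GG}), using the family furnished by Theorem~\ref{thm-main-1} as input. Gromov--Guth's theorem asserts that for every $d, Q \in \mathbb{N}$, $c > 0$ and $D \geq 2d+1$, there exists $C = C(D, d, Q, c) > 0$ such that any $Q$-bounded degree $d$-complex $X$ enjoying the $c$-topological overlapping property, when embedded in $\mathbb{R}^D$ with non-adjacent simplices at pairwise distance at least $1$, has its $1$-neighborhood of volume at least $C \cdot |X|^{1/(D-d)}$. Thus the corollary will be immediate once we exhibit an infinite family with shared bounded-degree and overlap constants.

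The first step will be to invoke Theorem~\ref{thm-main-1}, or more precisely Corollary~\ref{cor-main-4} applied to the explicit Ramanujan complexes of \cite{LSV2}, to produce for the given $d$ an infinite family $\{X_n\}_n$ of $d$-complexes that are simultaneously $Q$-bounded degree and have the $c$-topological overlapping property, with common constants $Q = Q(d)$ and $c = c(d)$. Uniformity of $Q$ and $c$ across the family is essential; in our setting it is automatic, since we fix one prime power $q \geq q_0(d)$ throughout and the entire family consists of finite quotients of a single Bruhat--Tits building, so every member has identical local structure and hence identical degree and overlap parameters. The second step is then to apply the Gromov--Guth theorem to each $X_n$ separately: because $Q(d)$ and $c(d)$ do not vary with $n$, the constant $C = C(D, d, Q(d), c(d))$ supplied by Gromov--Guth reduces to a single $C = C(D)$ (with $d$ fixed), and the asserted volume bound follows uniformly in $n$.

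No genuine obstacle arises in this deduction; it is essentially a pure citation. The interest of the corollary lies entirely in the fact that the hypothesis required by Gromov--Guth --- existence of bounded-degree complexes with the topological overlap property in every dimension --- was precisely the open problem resolved by Theorem~\ref{thm-main-1}, so the previously conditional volume bound of \cite{GG} is thereby upgraded to an unconditional statement.
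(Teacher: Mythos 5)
Your argument is correct and is exactly the paper's one-line proof: combine Theorem~\ref{thm-main-1} (or Corollary~\ref{cor-main-4}) with the conditional Gromov--Guth volume estimate. The only slip is a citation mix-up: the statement you quote (bounded degree plus $c$-topological overlap implies the volume lower bound) is \cite[Proposition~2.5]{GG}, which is what the paper cites, whereas \cite[Theorem~2.2]{GG} is the weaker unconditional result (with degree depending on $\epsilon$ and exponent $\frac{1}{D-d}-\epsilon$) that this corollary improves, as the remark after Corollary~\ref{cor-gromov-guth-2} explains.
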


\begin{proof}[Proof of Corollary \ref{cor-gromov-guth-2}]
Combine Theorem \ref{thm-main-1} with \cite[Proposition~2.5]{GG}.
\end{proof}

The original result \cite[Theorem~2.2]{GG} gave the slightly weaker result that, for any $\epsilon>0$, there exists an infinite family of $d$-dimensional complexes, $\{X_n\}_n$, whose degree might depends of $\epsilon$ (but not on $|X_n|$), and the lower bound on the volume of the $1$-neighborhood is $C\cdot |X_n|^{\frac{1}{D-d}-\epsilon}$.
The reader is referred to \cite{GG} to learn more on the work of Kolmogorov and Barzdin, and its generalization to higher dimensions.

We close this work with the following concluding remarks:

\begin{rem}
Theorem \ref{thm-ram-cri} and Corollary \ref{cor-ram-exp} holds for any finite quotient of a Bruhat-Tits building of large degree, not just for Ramanujan complexes. 
To prove this generalization, one needs only to prove that the underlying graph of any finite quotient of a Bruhat-Tits building of large degree, is a good expander graph.
This can be done by using Oh's explicit property (T) \cite{Oh}, to get explicit bounds on the second eigenvalue of such a graph.
\end{rem}

\begin{rem}
Note that for a $d$-dimensional Ramanujan complexes of sufficiently large degree,
we were only able to prove that their $(d-1)$-skeletons are cosystolic expanders.
Following \cite[Conjecture~3.2.4]{Lub2}, we suspect that the Ramanujan complexes themselves (and in fact all finite quotients of Bruhat-Tits buildings) are cosystolic expanders.
In contrast, in \cite[Proposition~1.5]{KKL} it was shown that not all Ramanujan complexes of sufficiently large degree are coboundary expanders, i.e. cosystolic expansion is the best one could hope for in general.
\end{rem}

%%%%%%%%%%%%%%%%%%%%%%%%%%%%%%%%%%%%%%%%%%%%%%%%%%%%%%%%%%%%%%%%%%%%%%%%%%%%%%%%%%%%%%%%%%%%%%%%%%%%%%%%%%%%%%%%%%%%%%%%%%%%%%%%%%%%%%%%%%%%%%%%%%%%%%%%%%%%%%%%%%%%%%%%%
%%%%%%%%%%%%%%%%%%%%%%%%%%%%%%%%%%%%%%%%%%%%%%%%%%%%%%%%%%%%%%%%%%%%%%%%%%%%%%%%%%%%%%%%%%%%%%%%%%%%%%%%%%%%%%%%%%%%%%%%%%%%%%%%%%%%%%%%%%%%%%%%%%%%%%%%%%%%%%%%%%%%%%%%%
%%%%%%%%%%%%%%%%%%%%%%%%%%%%%%%%%%%%%%%%%%%%%%%%%%%%%%%%%%%%%%%%%%%%%%%%%%%%%%%%%%%%%%%%%%%%%%%%%%%%%%%%%%%%%%%%%%%%%%%%%%%%%%%%%%%%%%%%%%%%%%%%%%%%%%%%%%%%%%%%%%%%%%%%%

\end{document}